\theoremstyle{plain}
\newtheorem{theorem}{Theorem}[section]
\newtheorem{thm}[theorem]{Theorem}
\newtheorem{cor}[theorem]{Corollary}
\newtheorem{prop}[theorem]{Proposition}
\newtheorem{lem}[theorem]{Lemma}
\newtheorem*{theorem*}{Theorem}
\theoremstyle{definition}
\newtheorem{rem}[theorem]{Remark}
\newtheorem{defn}[theorem]{Definition}
\newcommand{\bN}{{\mathbb{N}}}
\newcommand{\bR}{{\mathbb{R}}}
\newcommand{\bT}{{\mathbb{T}}}
\newcommand{\bZ}{{\mathbb{Z}}}
  \newcommand{\B}{{\mathcal{B}}}
  \newcommand{\C}{{\mathcal{C}}}
  \newcommand{\D}{{\mathcal{D}}}
  \newcommand{\F}{{\mathcal{F}}}
  \newcommand{\G}{{\mathcal{G}}}
  \newcommand{\I}{{\mathcal{I}}}
\newcommand{\M}{{\mathcal{M}}}
\renewcommand{\O}{{\mathcal{O}}}
\renewcommand{\S}{{\mathcal{S}}}
  \newcommand{\T}{{\mathcal{T}}}
\newcommand{\fA}{{\mathfrak{A}}}
\newcommand{\fk}{{\mathfrak{k}}}
\newcommand{\fs}{{\mathfrak{s}}}
\newcommand{\ft}{{\mathfrak{t}}}
\newcommand{\upchi}{{\raise.35ex\hbox{\ensuremath{\chi}}}}
\newcommand{\qforal}{\quad\text{for all}\quad}
\newcommand{\Aut}{\operatorname{Aut}}
\newcommand{\Iso}{\operatorname{Iso}}
\newcommand{\ca}{\mathrm{C}^*}
\newcommand{\mt}{\varnothing}
\newcommand{\Per}{{\rm Per}}
\begin{document}
%%%%%%%%%%%%%%%%%%%%%%%%%%%%%%%%%%%%%%
\title[KMS States of Self-similar $k$-Graph C*-algebras]{KMS States of Self-similar $k$-Graph C*-algebras}
\author[H. Li]{Hui Li}
\address{Hui Li,
Research Center for Operator Algebras and Shanghai Key Laboratory of Pure Mathematics and Mathematical Practice, Department of Mathematics, East China Normal University, 3663 Zhongshan North Road, Putuo District, Shanghai 200062, China}
\email{lihui8605@hotmail.com}
\author[D. Yang]{Dilian Yang}
\address{Dilian Yang,
Department of Mathematics $\&$ Statistics, University of Windsor, Windsor, ON
N9B 3P4, CANADA}
\email{dyang@uwindsor.ca}

\thanks{The first author was supported by Research Center for Operator Algebras of East China Normal University and by Science and Technology Commission of Shanghai Municipality (STCSM), grant No. 18dz2271000.}
\thanks{The second author was partially supported by an NSERC Discovery Grant.}

\begin{abstract}
%In this paper, we compute the KMS states of self-similar $k$-graph C*-algebras.
Let $G$ be a countable discrete amenable group, and $\Lambda$ be a strongly connected finite $k$-graph.
If $(G,\Lambda)$ is a pseudo free and locally faithful self-similar action which satisfies the finite-state condition, then the structure of the KMS simplex of the
C*-algebra $\O_{G,\Lambda}$  associated to $(G,\Lambda)$
is described: it is either empty or affinely isomorphic to the tracial state space of the C*-algebra of the periodicity group $\Per_{G,\Lambda}$ of $(G,\Lambda)$,
depending on whether the Perron-Frobenius eigenvector of $\Lambda$ preserves the $G$-action. 
As applications of our main results, we also exhibit several classes of important  examples.
\end{abstract}

\subjclass[2010]{46L30, 46L50}
\keywords{self-similar $k$-graph, self-similar action, KMS state, C*-algebra}

%\date{}
\maketitle
%%%%%%%%%%%%%%%%%%%%%%%%%%%%%%%%%%%%%%

\section{Introduction}

Self-similar $k$-graph C*-algebras were initially studied by the authors in \cite{LY17_2}. These algebras embrace many known important C*-algebras as special classes, such as $k$-graph C*-algebras introduced by Kumjian-Pask \cite{KP00}, Exel-Pardo algebras \cite{EP17}, unital Katsura algebras \cite{Kat08_1}, and Nekrashevych algebras \cite{Nek09}. Roughly speaking, a self-similar $k$-graph C*-algebra $\O_{G,\Lambda}$ is a universal C*-algebra generated by a unitary representation $u$ of $G$ and a Cuntz-Krieger representation $s$ of a $k$-graph $\Lambda$, which are compatible with the ambient self-similar action of $G$ on $\Lambda$:
$u_g s_\mu=s_{g \cdot \mu} u_{g \vert_\mu}$ for all $g\in G$ and $\mu\in\Lambda$.

The study of KMS states has been attracting a lot of attention. See, for example, \cite{ABLS18, HLRS14, HLRS15, LRRW14, LRRW16} to name just a few, and the references therein.
For a self-similar $k$-graph C*-algebra $\O_{G,\Lambda}$, there is a natural gauge action $\gamma: \prod_{i=1}^k\bT\to \Aut(\O_{G,\Lambda})$ such that
$\gamma_z(s_\mu)=z^{d(\mu)}s_\mu$ and $\gamma_z(u_g)=u_g$ for all $z\in \prod_{i=1}^k\bT$, $\mu\in\Lambda$, $g\in G$.
Let $r\in \prod_{i=1}^k\bR$. Then there is an action $\alpha$ of $\bR$ on $\O_{G,\Lambda}$ such that $\alpha_t=\gamma_{e^{itr}}$.
So it is natural to study the KMS states of the one-parameter dynamical system $(\O_{G,\Lambda}, \bR, \alpha)$. It turns
out that, for $0<\beta<\infty$, the existence of a KMS$_\beta$ state uniquely determines $r\in \prod_{i=1}^k\bR$.
Therefore, in this paper we assume that $\beta=1$ (see Section \ref{SS:basicKMS}), and compute the KMS$_1$ states of the preferred dynamical system,
which are simply called the \textit{KMS states of $\O_{G,\Lambda}$}. Based on the results in \cite{HLRS14, HLRS15, LRRW14, LRRW16}, one can not expect to compute the KMS states of $\O_{G,\Lambda}$ in general.
Here, we suppose that $\Lambda$ is a strongly connected finite $k$-graph, and that the self-similar action $(G,\Lambda)$ is pseudo free, locally faithful, and satisfies the finite-state condition (see Definition \ref{D:pf}).
Under these natural conditions, it turns out that the KMS simplex of $\O_{G,\Lambda}$ has a rather nice and neat structure,
which unifies some known results in \cite{HLRS14, HLRS15, LRRW14, LRRW16}.

Usually, one starts with computing the KMS states of the corresponding Toeplitz-type algebras,
and then checks when they can be factorized through the Cuntz-Krieger-type algebras. In this paper, our strategy is different:
we attack the Cuntz-Krieger-type algebras $\O_{G,\Lambda}$ directly.
To achieve our goal, we take advantage of the following features of a self-similar action $(G,\Lambda)$:
(1) the $G$-periodicity $\Per_{G,\Lambda}$ of $(G,\Lambda)$ forms a subgroup of $\bZ^k$ (Theorem \ref{T:Pergroup});
(2) there is a canonical Cartan subalgebra $\M_{G,\Lambda}$, which we call the
self-similar cycline subalgebra of $\O_{G,\Lambda}$ (Theorem \ref{T:Exp});
and (3) $\M_{G,\Lambda}$ is isomorphic to the tensor product of the group C*-algebra $\ca(\Per_{G,\Lambda})$ and the diagonal subalgebra $\D_\Lambda$
of the $k$-graph C*-algebra $\O_\Lambda$ (Theorem \ref{T:tensor}).
Then our structure theorem, Theorem \ref{T:KMSPer}, states that the KMS simplex of $\O_{G,\Lambda}$ is affinely isomorphic to the tracial state space
of the periodicity group C*-algebra $\ca(\Per_{G,\Lambda})$ if the Perron-Frobenius eigenvector of $\Lambda$ preserves the $G$-action
on the vertex set $\Lambda^0$ (see Remark \ref{R:KMSass}) .
Two interesting applications of our main results are also given (Corollaries \ref{C:Per} and \ref{C:uniKMS}).

In order to obtain feature (1), we thoroughly analyze the $G$-periodicity theory of the self-similar action $(G,\Lambda)$ in Sections \ref{S:chaGape} and
\ref{S:perthe}. Our results also unify some periodicity theory for $k$-graphs \cite{HLRS15, RS07, Yan15}.
To get feature (2), we borrow our approach from \cite{BLY17}, and invoke the nice criterion of Cartan subalgebras for groupoids in \cite{BNRSW16}.
Feature (3) heavily depends on the detailed descriptions of the KMS simplex of $\O_{G,\Lambda}$ given in Theorems \ref{T:kms} and \ref{T:Mstate}.

Several classes of examples are exhibited in Section \ref{S:eg} as concrete applications of our main results.

We should mention that our approaches here are much motivated by \cite{HLRS15} although different.

\smallskip

Let us end the introduction by fixing our notation and conventions.

\subsection*{Notation and conventions}

Denote by $\mathbb{N}$ the set of all nonnegative integers. 
Let $k$ be a fixed positive integer, which is allowed to be $\infty$. 
Denote by $\{e_i\}_{i=1}^{k}$ the standard basis of $\mathbb{N}^k$. For $n,m \in \mathbb{N}^k$, denote by $n \lor m$ (resp.~$n\wedge m$) the coordinatewise maximum (resp.~minimum) of $n$ and $m$.
We also use the multi-index notation: $z^n:=\prod_{i=1}^{k}z_i^{n_i}$ for all $z \in \prod_{i=1}^k\bT$ and $n\in \bN^k$.
The identity of a group $G$ is denoted as $1_G$.

\section{Preliminaries}

Some necessary background is provided in this section. The main sources are \cite{HLRS15, KP00, LY17_2}.

\subsection{$k$-graph C*-algebras}

A countable small category $\Lambda$ is called a \emph{$k$-graph} if there exists a functor $d:\Lambda \to \mathbb{N}^k$ satisfying that for $\mu\in\Lambda, n,m \in \mathbb{N}^k$ with $d(\mu)=n+m$, there exist unique $\beta\in d^{-1}(n)$ and $\alpha\in d^{-1}(m) $ such that $\mu=\beta\alpha$.
 A functor $f:\Lambda_1 \to \Lambda_2$ is called a \emph{graph morphism} if $d_2 \circ f=d_1$.

Let $\Lambda^n:=d^{-1}(n)$.
Then $\Lambda$ is said to be \emph{row-finite} if $\vert v\Lambda^{n}\vert<\infty$ for all $v \in \Lambda^0$ and $n \in \mathbb{N}^k$;
\textit{finite} if $|\Lambda^n|<\infty$ for all $n\in \bN^k$; \emph{source-free} if $v\Lambda^{n} \neq \mt$ for all $v \in \Lambda^0$ and $n \in \mathbb{N}^k$;
% \emph{sink-free} if $\Lambda^{n}v \neq \mt$ for all $v \in \Lambda^0$ and $n \in \mathbb{N}^k$,
 and \emph{strongly connected} if $v \Lambda w \neq \mt$ for all $v,w \in \Lambda^0$.

A very important row-finite source-free $k$-graph is $\Omega_k$ which is defined as follows.  Let $\Omega_k:=\{(p,q) \in \mathbb{N}^k \times \mathbb{N}^k:p \leq q\}$. For $(p,q), (q,m) \in \Omega_k$, define $(p,q) \cdot (q,m):=(p,m)$, $r(p,q):=(p,p)$, $s(p,q):=(q,q)$, and $d(p,q):=q-p$.
An \textit{infinite path} $x$ of $\Lambda$ is a graph morphism from $\Omega_k$ to $\Lambda$.
The set of all infinite paths of $\Lambda$ is denoted by $\Lambda^\infty$.

For a row-finite source-free $k$-graph $\Lambda$, the \textit{$k$-graph C*-algebra $\O_\Lambda$} is the universal C*-algebra generated by a Cuntz-Krieger $\Lambda$-family, 
which is a family of partial isometries $\{s_\lambda:\lambda\in\Lambda\}$ satisfying
\begin{enumerate}
\item[(CK1)] $\{s_v\}_{v \in \Lambda^0}$ is a family of mutually orthogonal projections;
\item[(CK2)] $s_{\mu\nu}=s_{\mu} s_{\nu}$ if $s(\mu)=r(\nu)$;
\item[(CK3)] $s_{\mu}^* s_{\mu}=s_{s(\mu)}$ for all $\mu \in \Lambda$; and
\item[(CK4)] $s_v=\sum_{\mu \in v \Lambda^{n}}s_\mu s_\mu^*$ for all $v \in \Lambda^0, n \in \mathbb{N}^k$.
\end{enumerate}
Let $\D_\Lambda:=\ca(\{s_\mu s_\mu^*:\mu\in\Lambda\})$ be the \emph{diagonal subalgebra} of $\O_\Lambda$.

From now on,  all $k$-graphs are assumed to be row-finite and source-free.

\subsection{The Perron-Frobenius theory for $k$-graphs}

In this subsection, we recall the Perron-Frobenius theory of strongly connected finite $k$-graphs from \cite{HLRS15}. %\cite{BNR14, HLRS15, KP00, Yan15}.
It is worth mentioning that, as shown in \cite[Lemma 2.1]{HLRS15}, a strongly connected $k$-graph is always source-free and sink-free.

Let $\Lambda$ be a strongly connected finite $k$-graph. 
%Denote by $M_{\Lambda^0}(\mathbb{N})$ the set of functions from $\Lambda^0 \times \Lambda^0$ to $\mathbb{N}$. 
For $T \in M_{\Lambda^0}(\mathbb{N})$, let $\rho(T)$ denote the spectral radius of $T$. For $p\in\bN^k$, let $T_p\in M_{\Lambda^0}(\mathbb{N})$
be defined by $T_p(v,w):=\vert v\Lambda^p w\vert$ for $v,w \in \Lambda^0$.
In particular, $T_{e_i}$ ($1\le i\le k$) are called the \textit{coordinate matrices of $\Lambda$}.
Let $\rho(\Lambda):=(\rho(T_{e_i}))_{i=1}^{k}$ be a vector in $\prod_{i=1}^k\bR$.

\begin{thm}[{\cite{HLRS15}}]
\label{T:PF}
Let $\Lambda$ be a strongly connected finite $k$-graph. Then the following properties hold true. 
\begin{enumerate}
\item $\rho(T_{p})=\rho(\Lambda)^p>0$ for all $p \in \mathbb{N}^k$.
\item There exists a unique $x_\Lambda \in \ell^1(\Lambda^0)$, called the \emph{Perron-Frobenius eigenvector} of $\Lambda$, satisfying that
\begin{itemize}
\item $x_\Lambda(v)>0$ for all $v \in \Lambda^0$;
\item $x_\Lambda$ has the unit norm;
\item $T_{e_i}x_\Lambda=\rho(T_{e_i})x_\Lambda$ for all $1 \leq i \leq k$;
\item if $y \in \ell^1(\Lambda^0)$ satisfies $T_{e_i} y=\rho(T_{e_i})y$ for all $1 \leq i \leq k$, then $y \in \mathbb{C} x_\Lambda$.
\end{itemize}
\item For any $\lambda \in \prod_{i=1}^k[0,\infty)$ and $0 \neq y \in \ell^1(\Lambda^0)$,
 $y \geq 0$ and $T_{e_i} y \leq \lambda_i y$ for all $1 \leq i \leq k$ $\implies$ $y>0$ and $\lambda_i \geq \rho(T_{e_i})$; and
 $\lambda_i=\rho(T_{e_i})$ for all $1 \leq i \leq k$ $\iff$ $T_{e_i} y = \lambda_i y$ for all $1 \leq i \leq k$;
\item There exists a unique state $\phi_\Lambda$ on $\D_\Lambda$, called the \emph{Perron-Frobenius state} of $\Lambda$, such that
$\phi_\Lambda(s_\mu s_\mu^*)=\rho(\Lambda)^{-d(\mu)}x_\Lambda(s(\mu))$ for all $\mu\in\Lambda$.
%\item\label{rho(Lambda) geq 1} there exists $0 \neq p \in \mathbb{N}^k$ such that $\rho(T_p) \geq 1$.
\end{enumerate}
\end{thm}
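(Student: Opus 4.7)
The plan is to bootstrap the classical Perron-Frobenius theorem for a single irreducible nonnegative matrix up to the commuting family $\{T_{e_i}\}_{i=1}^k$. Two structural facts drive everything. First, unique factorization in $\Lambda$ gives
\[
T_{p+q}(v,w) = |v\Lambda^{p+q}w| = \sum_{u \in \Lambda^0} |v\Lambda^p u|\cdot|u\Lambda^q w| = (T_pT_q)(v,w),
\]
so the coordinate matrices commute and $T_p = \prod_{i=1}^k T_{e_i}^{p_i}$. Second, strong connectedness together with finiteness of $\Lambda^0$ implies that for every pair $(v,w)$ some $T_p(v,w) > 0$, and the multinomial expansion of $S^n$, for $S := \sum_{i=1}^k T_{e_i}$, then shows $S^n(v,w) > 0$ for $n = |p|$; hence $S$ is an irreducible nonnegative matrix on $\bC^{\Lambda^0}$.

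For (1) and (2), I would apply classical Perron-Frobenius to $S$ to obtain a one-dimensional Perron eigenspace spanned by a strictly positive eigenvector $x$. Since each $T_{e_i}$ commutes with $S$, it preserves this eigenspace, forcing $T_{e_i}x = \lambda_i x$ for some $\lambda_i \geq 0$; the source-free condition gives $(T_{e_i}x)(v) > 0$ for every $v$, so in fact $\lambda_i > 0$. Normalizing $x$ to unit $\ell^1$-norm produces $x_\Lambda$, and I set $\rho(\Lambda) := (\lambda_i)_{i=1}^k$. The identity $\lambda_i = \rho(T_{e_i})$ follows from the general fact that a strictly positive eigenvector of a nonnegative matrix has its eigenvalue equal to the spectral radius: pair $x$ with a nonnegative left eigenvector of $T_{e_i}$ for $\rho(T_{e_i})$, whose existence is Perron's basic theorem. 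Combined with $T_p x_\Lambda = \rho(\Lambda)^p x_\Lambda$ and the submultiplicativity bound $\rho(T_p) \leq \prod_i \rho(T_{e_i})^{p_i}$, this yields (1). Uniqueness in (2) is then immediate, since any candidate $y$ is forced into the one-dimensional Perron eigenspace of $S$. For (3), I would apply the same construction to the transposed family $\{T_{e_i}^T\}$, whose sum is still irreducible, to obtain a strictly positive common left eigenvector $z$ with $z^T T_{e_i} = \rho(T_{e_i}) z^T$. Given $y \geq 0$, $y \neq 0$, with $T_{e_i}y \leq \lambda_i y$ for all $i$, iterating yields $S^n y \leq (\sum_i \lambda_i)^n y$, and the strict positivity of some $S^n$ upgrades $y$ to $y > 0$; the dual pairing $\rho(T_{e_i})\, z^T y = z^T T_{e_i}y \leq \lambda_i z^T y$ with $z^T y > 0$ delivers $\lambda_i \geq \rho(T_{e_i})$; if equality holds for all $i$, the strict positivity of $z$ forces each componentwise inequality $T_{e_i}y \leq \lambda_i y$ to saturate.

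For (4), I would define $\phi_\Lambda$ on the spanning projections by $\phi_\Lambda(s_\mu s_\mu^*) := \rho(\Lambda)^{-d(\mu)} x_\Lambda(s(\mu))$. The only nontrivial compatibility is with (CK4):
\[
\sum_{\mu \in v\Lambda^n} \rho(\Lambda)^{-d(\mu)} x_\Lambda(s(\mu)) = \rho(\Lambda)^{-n}(T_n x_\Lambda)(v) = x_\Lambda(v),
\]
which is precisely the common eigenvector identity. Extension to a state on $\D_\Lambda$ uses the inductive-limit structure $\D_\Lambda = \varinjlim_n \ca(\{s_\mu s_\mu^*: \mu \in \Lambda^n\})$, and uniqueness of the formula across $n$ reflects uniqueness of $x_\Lambda$. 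The one genuine obstacle in this program is the simultaneous-diagonalization step in the middle paragraph — extracting common eigenvalues from the Perron eigenvector of $S$ and then identifying them with the individual spectral radii $\rho(T_{e_i})$ via the left-eigenvector duality. Everything else is bookkeeping once the commutation relation $T_pT_q = T_{p+q}$ is in place.
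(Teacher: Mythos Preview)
The paper does not prove this theorem: it is stated in the preliminaries section as a result quoted from \cite{HLRS15}, with no argument given. So there is no ``paper's own proof'' to compare against.

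That said, your outline is sound and is essentially the strategy used in \cite{HLRS15}. The key structural input---that $T_pT_q=T_{p+q}$ from unique factorization, and that $S=\sum_i T_{e_i}$ is irreducible because strong connectedness forces some $T_p(v,w)>0$ for every pair---is exactly right, and reducing to classical Perron--Frobenius for $S$ is the standard move. Your identification $\lambda_i=\rho(T_{e_i})$ via pairing with a nonnegative left eigenvector is correct and in fact already gives $\rho(T_p)=\rho(\Lambda)^p$ directly (any nonnegative matrix with a strictly positive eigenvector has that eigenvalue equal to its spectral radius), so the submultiplicativity remark is unnecessary---and indeed submultiplicativity of spectral radius is not automatic for commuting nonnegative matrices, so it is safer to drop it. For~(iii) you handle the forward implication and one half of the biconditional; the remaining direction (if $T_{e_i}y=\lambda_i y$ for all $i$ with $y>0$, then $\lambda_i=\rho(T_{e_i})$) follows from the same pairing argument you already used. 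For~(iv), uniqueness is immediate once existence is established, since the formula pins down $\phi_\Lambda$ on a dense subalgebra; your consistency check with~(CK4) is the right computation for existence.
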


\subsection{Self-similar $k$-graphs}

In this subsection, we recall the definitions of self-similar $k$-graphs and their associated C*-algebras introduced by the authors in \cite{LY17_2}, which are generalizations of Exel-Pardo C*-algebras for directed graphs (i.e., $1$-graphs) studied in \cite{EP17}.

Let $\Lambda$ be a $k$-graph. A bijection $\pi:\Lambda \to \Lambda$ is called an \emph{automorphism} of $\Lambda$ if
$\pi$ preserves the degree, source and range maps.
Denote by $\Aut(\Lambda)$ the automorphism group of $\Lambda$.

Let $G$ be a (discrete countable) group. We say that \textit{$G$ acts on $\Lambda$} if there is a group homomorphism $\varphi$ from $G$ to $\Aut(\Lambda)$.
For $g\in G$ and $\mu\in\Lambda$, we often simply write $\varphi_g(\mu)$ as $g\cdot \mu$.

%%%%%%%%%%%
\begin{defn}[{\cite[Definition~3.2]{LY17_2}}]
\label{D:ss}
Let $\Lambda$ be a $k$-graph, and $G$ be a group acting on $\Lambda$.
Then we call $(G,\Lambda)$ \emph{a self-similar action} if there exists a \emph{restriction map} $G\times \Lambda \to G$,
$(g,\mu)\mapsto g|_\mu$, such that
\begin{enumerate}
%\item\textcolor{red}{$g \vert_\mu \cdot s(\nu)=g \cdot s(\nu)$ for all $g \in G,\mu,\nu \in \Lambda$ with $s(\mu)=r(\nu)$; This is not needed since Condition 2 implicitly yields}
\item
$g\cdot (\mu\nu)=(g \cdot \mu)(g \vert_\mu \cdot \nu)$ for all $g \in G,\mu,\nu \in \Lambda$ with $s(\mu)=r(\nu)$.

\item
$g \vert_v =g$ for all $g \in G,v \in \Lambda^0$;

\item
$g \vert_{\mu\nu}=g \vert_\mu \vert_\nu$ for all $g \in G,\mu,\nu \in \Lambda$ with $s(\mu)=r(\nu)$;

\item
$1_G \vert_{\mu}=1_G$ for all $\mu \in \Lambda$;

\item
$(gh)\vert_\mu=g \vert_{h \cdot \mu} h \vert_\mu$ for all $g,h \in G,\mu \in \Lambda$.
\end{enumerate}
In this case, $\Lambda$ is also called a \textit{self-similar $k$-graph over $G$}.
% and \textit{self-similar group on $\Lambda$}, respectively.
\end{defn}

\begin{defn}
\label{D:pf}
A self-similar action $(G,\Lambda)$ is said to
\begin{enumerate}
\item be \emph{pseudo free} if there are $g \in G$ and $\mu \in \Lambda$ such that $g \cdot \mu=\mu$ and $g \vert_\mu=1_G $, then $g=1_G$;
\item be \emph{locally faithful} if
there exist $g\in G$ and $v\in \Lambda^0$ such that $g\cdot \mu=\mu$ for all $\mu\in v\Lambda$, then $g=1_G$;
\item satisfy the \emph{finite-state condition} if for any $g \in G$, the set $\{g \vert_\mu:\mu \in \Lambda\}$ is finite. % (cf.~\cite{LRRW16}).
\end{enumerate}
\end{defn}

\begin{rem} Some remarks are in order.
\begin{itemize}
\item[(1)] The local faithfulness condition is stronger than the faithfulness condition.
A simple and useful observation is the following: if there are $g\in G$ and $v\in\Lambda^0$ such that $g\cdot x = x$ for all $x\in v\Lambda^\infty$, then $g=1_G$ 
(see Subsection~\ref{SS:pathgrp}).

\item[(2)] If $(G,\Lambda)$ is a locally faithful self-similar action, then one can easily check that, in Definition \ref{D:ss}, Condition (i) implies Conditions (ii)-(v), and so only Condition (i) is required in this case.  This reconciles with the classical definition of self-similar groups in \cite{Nek05}.

\item[(3)] The pseudo freeness plays an important role in \cite{EP17, LY17_2} in the study of self-similar C*-algebras.

\item[(4)] For self-similar groups, the finite-state condition is closely related to the contracting property (\cite{Nek05}).
\end{itemize}
\end{rem}

\begin{defn}[{\cite[Definition~3.8]{LY17_2}}]\label{D:O}
Let $(G,\Lambda)$ be a self-similar action with $\vert\Lambda^0\vert<\infty$. The \textit{self-similar $k$-graph C*-algebra} $\mathcal{O}_{G,\Lambda}$
is defined to be the universal unital C*-algebra generated by a family of unitaries $\{u_g\}_{g \in G}$ and
a Cuntz-Krieger family $\{s_\mu\}_{\mu \in \Lambda}$ satisfying
\begin{itemize}
\item[(i)]
$u_{gh}=u_g u_h$ for all $g$ and $h \in G$;
\item[(ii)]
$u_g s_\mu=s_{g \cdot \mu} u_{g \vert_\mu}$ for all $g \in G$ and $\mu \in \Lambda$.
\end{itemize}
\end{defn}

Thus $\O_{G,\Lambda}$ is generated by a universal pair $(u,s)$ of representations, where $u$ is a unitary representation of $G$ and
$s$ is a representation of the $k$-graph C*-algebra $\O_\Lambda$, such that $u$ and $s$ are compatible with the underlying self-similar action.
%%%%%%%

Let us record the following result  (\cite[Propositions~3.12 and 5.10]{LY17_2}) for later use.

\begin{prop}
\label{P:genO}
Let $(G,\Lambda)$ be a self-similar action with $\vert\Lambda^0\vert<\infty$.
Then
\begin{enumerate}
\item
%Let $\Lambda$ with $\vert\Lambda^0\vert<\infty$ be a self-similar $k$-graph over a group $G$.
%Let $\Lambda$ be a $k$-graph such that $\Lambda^0$ is finite, let $G$ be a group, and let $\varphi:G \to \Aut(\Lambda)$ be a self-similar action.
 the linear span of $\{s_\mu u_g s_\nu^*: \mu, \nu \in \Lambda, g \in G, s(\mu)= g \cdot s(\nu)\}$ is a dense $*$-subalgebra of $\mathcal{O}_{G,\Lambda}$;
\item
$\O_\Lambda$ naturally embeds into $\O_{G,\Lambda}$.
\end{enumerate}
\end{prop}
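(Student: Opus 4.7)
The plan is to establish (i) by verifying that the candidate spanning set $\T := \{s_\mu u_g s_\nu^* : s(\mu)=g\cdot s(\nu)\}$ is closed under adjoint and multiplication and that all generators of $\O_{G,\Lambda}$ already lie in its span, and to establish (ii) by producing a $*$-homomorphism $\O_\Lambda\to\O_{G,\Lambda}$ via universality and then applying the gauge-invariant uniqueness theorem of Kumjian-Pask for $k$-graph C*-algebras. A useful preliminary observation is that any monomial $s_\mu u_g s_\nu^*$ equals $s_\mu s_{g\cdot s(\nu)} u_g s_\nu^*$ (via $s_v u_g = u_g s_{g^{-1}\cdot v}$, which follows from $g|_v=g$), so it either vanishes or automatically satisfies the source condition; consequently in product manipulations we may freely drop vanishing terms and treat each surviving monomial as an element of $\T$.

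For (i), the generators lie in $\spn\T$: $s_\mu = s_\mu u_{1_G} s_{s(\mu)}^*$, and $u_g = \sum_{v\in\Lambda^0} s_{g\cdot v} u_g s_v^*$ after inserting $1 = \sum_v s_v s_v^*$ and applying $u_g s_v = s_{g\cdot v} u_g$. Closure under $*$ is immediate from $(s_\mu u_g s_\nu^*)^* = s_\nu u_{g^{-1}} s_\mu^*$. For multiplicative closure I would handle $(s_\mu u_g s_\nu^*)(s_\alpha u_h s_\beta^*)$ in three moves: first expand $s_\nu^* s_\alpha = \sum_{(\eta,\xi)\in\Lambda^{\min}(\nu,\alpha)} s_\eta s_\xi^*$ via the Cuntz-Krieger factorization; next apply the covariance relation $u_g s_\eta = s_{g\cdot\eta} u_{g|_\eta}$ to push $u_g$ rightward past $s_\eta$; then apply the adjoint relation $s_\xi^* u_h = u_{(h^{-1}|_\xi)^{-1}} s_{h^{-1}\cdot\xi}^*$ to push $u_h$ leftward past $s_\xi^*$. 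Reassembling with the outer $s_\mu$ and $s_\beta^*$ yields a finite sum of monomials, each in $\T$ by the preliminary observation.

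For (ii), the subfamily $\{s_\mu\}_{\mu\in\Lambda}\subset\O_{G,\Lambda}$ directly satisfies (CK1)--(CK4) by Definition \ref{D:O}, so the universal property of $\O_\Lambda$ supplies a $*$-homomorphism $\pi:\O_\Lambda\to\O_{G,\Lambda}$ with $\pi(s_\mu)=s_\mu$. The gauge-invariant uniqueness theorem reduces injectivity of $\pi$ to two conditions: (a) existence of a $\bT^k$-action on $\O_{G,\Lambda}$ that restricts to the canonical gauge action on $\pi(\O_\Lambda)$, and (b) $\pi(s_v)\neq 0$ for every $v\in\Lambda^0$. Condition (a) is immediate from universality: for each $z\in\bT^k$ the pair $(z^{d(\mu)}s_\mu,u_g)$ satisfies the defining relations of $\O_{G,\Lambda}$, hence induces a gauge automorphism $\gamma_z\in\Aut(\O_{G,\Lambda})$.

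The main obstacle is (b): nontriviality of the vertex projections in the universal algebra $\O_{G,\Lambda}$. My plan is to exhibit a concrete representation in which each $s_v$ acts as a nonzero projection, for instance a Fock/infinite-path representation on a Hilbert space built from $\Lambda^\infty$ and $G$, with $s_\mu$ acting by path prefixing and $u_g$ acting as a unitary assembled from the $G$-action on paths together with a twist by the restriction maps. The cocycle identity (v) of Definition \ref{D:ss} is precisely what is needed for such $u_g$'s to constitute a genuine unitary representation of $G$ and to satisfy the covariance relation $u_g s_\mu = s_{g\cdot\mu} u_{g|_\mu}$ on the nose, while conditions (i)--(iv) yield the Cuntz-Krieger relations. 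Once such a representation is in place, $s_v$ acts as the nonzero projection onto paths issuing from $v$, completing the proof of (ii).
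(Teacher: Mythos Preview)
The paper does not prove this proposition; it merely records the result, citing \cite[Propositions~3.12 and 5.10]{LY17_2}. There is therefore no in-paper argument to compare against. Your proof of (i) is correct and is the standard computation; the adjoint formula $s_\xi^* u_h = u_{(h^{-1}|_\xi)^{-1}} s_{h^{-1}\cdot\xi}^*$ is indeed equivalent to $s_\xi^* u_h = u_{h|_{h^{-1}\cdot\xi}} s_{h^{-1}\cdot\xi}^*$ via property~(v) of Definition~\ref{D:ss}. Your strategy for (ii) via the gauge-invariant uniqueness theorem is also sound, and you have correctly isolated the one nontrivial point, namely $s_v\ne 0$ in $\O_{G,\Lambda}$.

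One caution on that last step: a Fock-type representation on $\ell^2(\Lambda)$ with $s_\mu$ acting as a creation operator will \emph{not} satisfy (CK4), so it only represents the Toeplitz extension, not $\O_{G,\Lambda}$. What works is an orbit representation on the infinite-path space. For $x_0\in\Lambda^\infty$, let $[x_0]$ be the countable set $\{y\in\Lambda^\infty:\sigma^p(y)=g\cdot\sigma^q(x_0)\text{ for some }p,q\in\bN^k,\ g\in G\}$, and on $\ell^2([x_0])$ set $s_\mu\delta_y=\delta_{\mu y}$ when $r(y)=s(\mu)$ (zero otherwise) and $u_g\delta_y=\delta_{g\cdot y}$. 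All of the defining relations, including (CK4), hold on the nose, and the covariance relation follows from Definition~\ref{D:ss}(i). Taking a direct sum over one basepoint $x_v\in v\Lambda^\infty$ for each $v\in\Lambda^0$ (which exists since $\Lambda$ is source-free) guarantees that every $s_v$ acts nontrivially. With this adjustment your plan goes through.
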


\subsection{Path groupoid $\G_{G,\Lambda}$ associated to $(G,\Lambda)$}
\label{SS:pathgrp}

To a self-similar $k$-graph $\Lambda$ over a group $G$, one can associate a path-like groupoid $\G_{G,\Lambda}$.  For this, we must introduce more notation.
Let $C(\mathbb{N}^k,G)$ be the set of all mappings from $\mathbb{N}^k$ to $G$, which is a group under the pointwise multiplication. For $z \in \mathbb{Z}^k$ and $f \in C(\mathbb{N}^k,G)$, define $\T_z(f) \in C(\mathbb{N}^k,G)$ by
\begin{align*}
\T_z(f)(p)&:= \begin{cases}
    f(p-z) &\text{if $p-z \in \mathbb{N}^k$} \\
    1_G &\text{if $p-z \notin \mathbb{N}^k$}.
\end{cases}
\end{align*}
Define an equivalence relation $\sim$ on $C(\bN^k,G)$ as follows: for $f,g \in C(\mathbb{N}^k,G)$,
\[
f \sim g \iff  \text{there exists }z \in \mathbb{N}^k \text{ such that } f(p)=g(p)\qforal p \geq z.
\]
Let $Q(\bN^k,G)$ be the quotient group $C(\mathbb{N}^k,G)/\!\!\!\sim$.  For $f\in C(\mathbb{N}^k,G)$, we write $[f] \in Q(\mathbb{N}^k,G)$.
Then $\T_z$ yields an automorphism, still denoted by $\T_z$, on $Q(\bN^k,G)$.
Moreover, $\T:\mathbb{Z}^k \to \Aut( Q(\bN^k,G)),\ z\mapsto \T_z$, is a homomorphism.
So one can form the semidirect product $Q(\bN^k,G) \rtimes_\T \mathbb{Z}^k$.

For $g \in G, x \in \Lambda^\infty$, define $g\cdot x\in \Lambda^\infty$ and $g|_x\in C(\mathbb{N}^k,G)$ by
\begin{align*}
(g \cdot x)(p,q)&:=g \vert_{x(0,p)} \cdot x(p,q) \qforal  p \leq q \in \mathbb{N}^k,\\
g \vert_x(p)&:=g \vert_{x(0,p)}\qforal p \in \mathbb{N}^k.
\end{align*}
Notice that $(g \cdot x)(0,q)=g \cdot (x(0,q))$. So we can write it as $g\cdot x(0,q)$ without any obscurity.

The \emph{self-similar path groupoid} associated to the self-similar $k$-graph $\Lambda$ over $G$ is defined as
\[
\G_{G,\Lambda}:=\left\{\big(\mu (g \cdot x);\T_{d(\mu)}([g \vert_{x}]),d(\mu)-d(\nu);\nu x\big):
\begin{matrix}
g \in G, \mu,\nu \in \Lambda ,\\
\text{ with } s(\mu)=g \cdot s(\nu)
\end{matrix}
\right\},
%%%
\]
which is a subgroupoid of $\Lambda^\infty \times (Q(\bN^k,G)\rtimes_\T \mathbb{Z}^k ) \times \Lambda^\infty$.
For $g \in G, \mu,\nu \in \Lambda$ with $s(\mu)=g \cdot s(\nu)$, define
\[
Z(\mu,g,\nu):=\big\{(\mu (g \cdot x);\T_{d(\mu)}([g \vert_{x}]),d(\mu)-d(\nu);\nu x):x \in s(\nu)\Lambda^\infty\big\}.
\]
Endow $\mathcal{G}_{G,\Lambda}$ with the topology generated by the basic open sets
\[
\mathcal{B}_{G,\Lambda}:=\big\{Z(\mu,g,\nu):g \in G, \mu,\nu \in \Lambda,s(\mu)=g \cdot s(\nu)\big\}.
\]
%\end{defn}
%%%%%%

\begin{thm}[{\cite[Theorems~5.7 and 5.9]{LY17_2}}]
Let $(G,\Lambda)$ be a pseudo free self-similar action. Then
\begin{enumerate}
\item
$\mathcal{G}_{G,\Lambda}$ is an ample groupoid, and $\mathcal{B}_{G,\Lambda}$ consists of compact open bisections;
\item
$\O_{G,\Lambda}\cong\ca(\G_{G,\Lambda})$ provided that $G$ is amenable.
\end{enumerate}
\end{thm}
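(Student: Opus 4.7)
The plan is to verify assertion (i) by direct set-theoretic and topological checks, invoking pseudo freeness precisely where Hausdorffness and uniqueness of triple presentation are needed, and then to deduce (ii) via universality together with a gauge-invariant uniqueness argument whose injectivity rests on amenability of $\G_{G,\Lambda}$.

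For part (i), I would begin by showing that $\G_{G,\Lambda}$ is a subgroupoid of the ambient product groupoid $\Lambda^\infty \times (Q(\bN^k,G)\rtimes_\T \bZ^k) \times \Lambda^\infty$ equipped with the operations $(y,\gamma,x)(x,\gamma',w)=(y,\gamma\gamma',w)$ and $(y,\gamma,x)^{-1}=(x,\gamma^{-1},y)$. Closure under composition and inversion is a bookkeeping exercise using the self-similarity axioms of Definition~\ref{D:ss} and the unique factorization in $\Lambda$; the unit space then arises as the image of $\Lambda^\infty$ under $x\mapsto (x,[1_G],0,x)$. Next I would check that on each $Z(\mu,g,\nu)$ the source and range maps restrict to homeomorphisms onto the cylinder sets $Z(\nu)$ and $Z(\mu)$ in $\Lambda^\infty$, yielding the bisection property. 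Compactness of $Z(\mu,g,\nu)$ is inherited from $Z(s(\nu))\subset \Lambda^\infty$, openness is built into the topology, and hence the basic sets are compact open bisections and $\G_{G,\Lambda}$ is étale, so ample.

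Pseudo freeness enters at exactly two points. First, it forces $\mathcal{B}_{G,\Lambda}$ to be closed under finite intersections up to disjoint unions of basic sets: if two quadruples of the prescribed form agree at a point, an eventual-coincidence argument on the cocycles combined with the pseudo freeness axiom $g\cdot\mu=\mu$ and $g|_\mu=1_G \implies g=1_G$ forces the two $G$-labels to agree on a sufficiently deep cylinder. Second, the same axiom is what ensures that $\G_{G,\Lambda}$ is Hausdorff, since the failure of the Hausdorff separation property for two distinct groupoid elements with common source and range produces precisely a nontrivial $g\in G$ fixing a long prefix with trivial restriction, contradicting pseudo freeness.

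For part (ii), I would use the standard universality-plus-uniqueness package. Set $S_\mu := \one_{Z(\mu,1_G,s(\mu))}$ and $U_g := \sum_{v\in\Lambda^0}\one_{Z(g\cdot v,g,v)}$ in $\ca(\G_{G,\Lambda})$; using that convolution of characteristic functions of bisections reduces to a pointwise product on $\G_{G,\Lambda}$, verify that $\{S_\mu\}$ is a Cuntz-Krieger $\Lambda$-family, that $g\mapsto U_g$ is a unitary representation of $G$, and that $U_g S_\mu = S_{g\cdot \mu}U_{g|_\mu}$. By universality of $\O_{G,\Lambda}$ this yields a $*$-homomorphism $\pi:\O_{G,\Lambda}\to \ca(\G_{G,\Lambda})$; surjectivity is immediate from $S_\mu U_g S_\nu^* = \one_{Z(\mu,g,\nu)}$ and density of the span of these in $\ca(\G_{G,\Lambda})$ via Proposition~\ref{P:genO}(i) applied to the groupoid side. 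Injectivity is provided by a gauge-invariant uniqueness theorem for étale groupoid C*-algebras: the gauge action of $\prod_{i=1}^k\bT$ on $\O_{G,\Lambda}$ intertwines with the action induced by the continuous $\bZ^k$-cocycle $c(y,\gamma,z,x):=z$ on $\ca(\G_{G,\Lambda})$, so once the full and reduced C*-algebras of $\G_{G,\Lambda}$ coincide the argument closes.

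The main obstacle is therefore establishing amenability of $\G_{G,\Lambda}$ from amenability of $G$, which is exactly where the amenability hypothesis enters. The strategy I would pursue is to realise $\G_{G,\Lambda}$ as an extension, or a skew-product, built from the (amenable) path groupoid of $\Lambda$ and a cocycle valued in $Q(\bN^k,G)\rtimes_\T\bZ^k$, and then to apply permanence of amenability under such extensions after pushing amenability of $G$ through the quotient $C(\bN^k,G)\to Q(\bN^k,G)$ and the semidirect product with $\bZ^k$. Alternatively one can construct approximate invariant means on the unit space by combining Fölner sequences in $G$ with the finite truncations of $\Lambda^\infty$. This step is the most technical ingredient and is the only place where amenability of $G$ is genuinely used.
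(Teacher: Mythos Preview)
This theorem is not proved in the present paper: it is quoted verbatim from \cite[Theorems~5.7 and 5.9]{LY17_2} as background in the Preliminaries section, with no argument given here. There is therefore nothing in this paper to compare your proposal against.

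That said, your outline is a plausible sketch of how such results are established in the literature, and it correctly identifies the two places where the hypotheses bite: pseudo freeness for Hausdorffness of $\G_{G,\Lambda}$ (and for the basic sets to behave well under intersection), and amenability of $G$ for the equality of full and reduced groupoid C*-algebras. One point to be careful about in part (ii): gauge-invariant uniqueness alone does not immediately give injectivity of $\pi$ on $\O_{G,\Lambda}$; you also need to know that the fixed-point algebra (the core) embeds faithfully, which in the groupoid picture amounts to analysing the kernel of the cocycle $c$ and typically requires a separate inductive-limit or AF-type argument. Your sketch elides this step. If you want the details, you should consult \cite{LY17_2} directly rather than this paper.
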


%%%%%%%%%%%%%%%%%%%%

\section{Characterizations of The $G$-Aperiodicity of Self-Similar $k$-Graphs}
\label{S:chaGape}

Let $\Lambda$ be a pseudo free self-similar $k$-graph over a group $G$. Our main goal in this section is to characterize when $\Lambda$ is $G$-aperiodic in terms of finite paths and cycline triples.
These characterizations generalize some results on $k$-graphs in the literature (\cite{RS07, Yan15}), and will be frequently used later.

\begin{defn}
Let $\Lambda$ be a self-similar $k$-graph over $G$. For $\mu,\nu \in \Lambda,g \in G$ with $s(\mu)=g \cdot s(\nu)$, the triple $(\mu,g,\nu)$ is called \emph{cycline} if $\mu(g \cdot x)=\nu x$ for all $x \in s(\nu)\Lambda^\infty$.
\end{defn}

Let $\C_{G,\Lambda}$ denote the set of all cycline triples.
Cycline triples are a generalization of cycline pairs for $k$-graphs in \cite{BNR14} (see also \cite{Yan16}).
Clearly, $(\mu,\nu)$ is a cycline pair if and only if $(\mu, 1_G, \nu)$ is a cycline triple.  We
use $\C_\Lambda$ to denote the set of all cycline pairs for $\Lambda$.

 Also,  for every $\mu\in\Lambda$, $(\mu, 1_G, \mu)$ is obviously cycline. Such cycline triples are said to be \textit{trivial}.

\begin{defn}[{\cite[Definition 6.4]{LY17_2}}]
\label{D:Gape}
Let $\Lambda$ be a self-similar $k$-graph over $G$.
 An infinite path $x \in \Lambda^\infty$ is said to be \emph{$G$-aperiodic} if, for $g \in G,p,q \in \mathbb{N}^k$ with $g \neq 1_G$ or $p \neq q$, we have $\sigma^{p}(x) \neq g \cdot \sigma^q(x)$; otherwise, $x$ is called \emph{$G$-periodic}.  $\Lambda$ is said to be \emph{$G$-aperiodic} if, for any $v \in \Lambda^0$, there exists a $G$-aperiodic path
 $x\in v\Lambda^\infty$; and \emph{$G$-periodic} otherwise.
\end{defn}

\begin{lem}
\label{L:equi}
Let $(\mu, g, \nu)$ be a cycline triple. Then
\begin{enumerate}
\item $(\nu, g^{-1}, \mu)$ is cycline.

\item
$
\sigma^{d(\mu)}(x)=\sigma^{d(\nu)}(g\cdot x)
$
for all $x\in s(\nu)\Lambda^\infty$.
\end{enumerate}
\end{lem}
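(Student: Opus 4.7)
For part (i), the plan is essentially to unravel the definition. First I would verify the source-compatibility condition by applying $g^{-1}$ to the equality $s(\mu) = g \cdot s(\nu)$, which gives $s(\nu) = g^{-1}\cdot s(\mu)$ (using that $G$ acts on $\Lambda^0$ via automorphisms). Given an arbitrary $y \in s(\mu)\Lambda^\infty$, I would set $x := g^{-1}\cdot y$ and check that $x \in s(\nu)\Lambda^\infty$: indeed $r(x) = g^{-1}\cdot r(y) = g^{-1}\cdot s(\mu) = s(\nu)$. The cycline hypothesis applied to this $x$ yields $\mu(g\cdot x) = \nu x$, i.e.\ $\mu y = \nu(g^{-1}\cdot y)$, which is exactly the cycline identity for $(\nu, g^{-1}, \mu)$. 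Since $y \in s(\mu)\Lambda^\infty$ was arbitrary, this concludes the argument.

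For part (ii), I would simply apply the shift $\sigma^{d(\mu)+d(\nu)}$ to both sides of the identity $\mu(g\cdot x) = \nu x$. On the left side, shifting by $d(\mu)$ strips off $\mu$ to leave the infinite path $g\cdot x$, and then shifting by $d(\nu)$ produces $\sigma^{d(\nu)}(g\cdot x)$. On the right side, shifting by $d(\nu)$ strips off $\nu$ to leave $x$, and then shifting by $d(\mu)$ yields $\sigma^{d(\mu)}(x)$. Since the shifts $\sigma^{d(\mu)}$ and $\sigma^{d(\nu)}$ commute on $\Lambda^\infty$, the two sides must agree, giving the claimed identity.

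Neither part poses any real obstacle: the main point is only to remember that the action of $G$ on $\Lambda^\infty$ is well-defined via the restriction maps so that $g^{-1}\cdot y$ makes sense in part (i), and that the factorization property of $k$-graphs makes shifting by composite degrees unambiguous in part (ii). If any subtlety arises, it would be bookkeeping around the action $g \cdot (\mu x)$ versus $(g\cdot \mu)(g\vert_\mu \cdot x)$, but this is handled by the axioms of a self-similar action (Definition \ref{D:ss}) and does not affect the two-line proofs outlined above.
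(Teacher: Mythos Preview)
Your proposal is correct and essentially matches the paper's own argument. For (i) the paper simply says ``follows from the definition,'' which is exactly your unraveling; for (ii) the paper factorizes both sides of $\mu(g\cdot x)=\nu x$ at degree $d(\mu)+d(\nu)$ and compares tails, which is precisely your application of $\sigma^{d(\mu)+d(\nu)}$ to both sides (the paper additionally records the intermediate identity $\sigma^{d(\nu)}(g\cdot x)=g|_{x(0,d(\nu))}\cdot\sigma^{d(\nu)}(x)$, but this is not needed for the lemma as stated).
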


\begin{proof}
(i) follows from the definition. For (ii), notice that from $\mu g\cdot x=\nu x$ for all $x\in s(\nu)\Lambda^\infty$ one has
\[
\mu g\cdot x(0, d(\nu)) g|_{x(0,d(\nu)}\sigma^{d(\nu)}(x)=\nu x(0,d(\mu))\sigma^{d(\mu)}(x).
\]
So one has $\mu g\cdot x(0,d(\nu))=\nu x(0,d(\mu))$ and $\sigma^{d(\mu)}(x)=g|_{x(0,d(\nu)}\sigma^{d(\nu)}(x)=\sigma^{d(\nu)}(g\cdot x)$.
\end{proof}

\begin{lem}\label{L:xper}
Let $\Lambda$ be a pseudo free self-similar $k$-graph over $G$.
Then $x\in\Lambda^\infty$ is $G$-periodic if and only if $\sigma^n(x)$ is $G$-periodic for any $n\in \bN^k$.
\end{lem}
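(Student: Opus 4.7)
The reverse implication would be immediate: taking $n=0$ gives $\sigma^0(x)=x$, which is $G$-periodic by hypothesis. All of the substance therefore lies in the forward direction, and my plan is to push a chosen periodicity witness for $x$ through the shift $\sigma^n$ and then verify that the resulting triple still witnesses $G$-periodicity.

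Fix $n\in\bN^k$ and assume $x$ is $G$-periodic, so that there exist $g\in G$ and $p,q\in\bN^k$ with $g\neq 1_G$ or $p\neq q$ satisfying $\sigma^p(x)=g\cdot\sigma^q(x)$. The key preliminary identity I would establish is
\[
\sigma^m(g\cdot y)=g|_{y(0,m)}\cdot\sigma^m(y)\qforal m\in\bN^k,\ y\in\Lambda^\infty.
\]
This reduces to a coordinate-wise check using the definition $(g\cdot y)(p',q')=g|_{y(0,p')}\cdot y(p',q')$, axiom (iii) of Definition~\ref{D:ss} in the form $g|_{y(0,m)}|_{y(m,m+r)}=g|_{y(0,m+r)}$, and axiom (ii) ($g|_v=g$ on vertices) to identify the initial restrictions. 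Applied to the given witness, this yields
\[
\sigma^p(\sigma^n(x))=\sigma^{p+n}(x)=\sigma^n(g\cdot\sigma^q(x))=h\cdot\sigma^{q+n}(x)=h\cdot\sigma^q(\sigma^n(x)),
\]
where $h:=g|_{\sigma^q(x)(0,n)}$.

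The main (and essentially only) obstacle is to check that $(h,p,q)$ is a \emph{genuine} witness of $G$-periodicity for $\sigma^n(x)$, i.e.\ that $h\neq 1_G$ or $p\neq q$. If $p\neq q$ there is nothing to check. In the remaining case $p=q$, the original hypothesis forces $g\neq 1_G$ and gives $\sigma^p(x)=g\cdot\sigma^p(x)$; reading off the initial segment of length $n$ and using $g|_v=g$ on vertices shows $g\cdot\mu=\mu$ for $\mu:=\sigma^p(x)(0,n)\in\Lambda^n$. If in addition $h=g|_\mu=1_G$, then pseudo freeness of $(G,\Lambda)$ forces $g=1_G$, contradicting $g\neq 1_G$. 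Hence $h\neq 1_G$ in this case, so $(h,p,q)$ witnesses $G$-periodicity of $\sigma^n(x)$, completing the plan.
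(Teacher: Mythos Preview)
Your proof is correct and follows essentially the same approach as the paper: push the periodicity witness $(g,p,q)$ through the shift via the identity $\sigma^n(g\cdot\sigma^q(x))=g|_{\sigma^q(x)(0,n)}\cdot\sigma^{q+n}(x)$, and in the case $p=q$ use pseudo freeness on $\mu=\sigma^p(x)(0,n)$ to rule out $h=1_G$. The only difference is that you spell out the reverse implication and the shift identity in slightly more detail than the paper does.
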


\begin{proof}
Obviously it is sufficient to show the necessity. For this, let $x\in\Lambda^\infty$ be $G$-periodic and $n\in\bN^k$.
Thus there are $p,q \in \mathbb{N}^k$ and $g \in G$ with $p \neq q$ or $g \neq 1_G$ such that $\sigma^p(x)=g\cdot \sigma^q(x)$.
Then $\sigma^{p+n}(x)=\sigma^n(g\cdot \sigma^q(x))=g|_{\sigma^q(x)(0,n)}\sigma^{q+n}(x)$.
If $p\ne q$, clearly $\sigma^n(x)$ is $G$-periodic.
If $p=q$, then $g\ne 1_G$. To a contrary, assume that $g|_{\sigma^q(x)(0,n)}=1_G$.
It follows from $\sigma^q(x)=g\cdot \sigma^q(x)$ that $\sigma^q(x)(0,n)=g\cdot \sigma^q(x)(0,n)$.
But the action is pseudo free, we have $g=1_G$, a contraction. Therefore, $\sigma^n(x)$ is also $G$-periodic in this case.
%%%
\end{proof}

The following proposition is a generalization of \cite[Lemma~3.2]{RS07}, which uses finite paths to characterize the $G$-aperiodicity of $(G,\Lambda)$.

\begin{prop}\label{P:Gape}
Let $\Lambda$ be a pseudo free self-similar $k$-graph over $G$.
Then $\Lambda$ is $G$-aperiodic if and only if, for any $\mu \in \Lambda, g \in G, p,q \in \mathbb{N}^k$ with $g \neq 1_G$ or $p \neq q$, we have
\begin{enumerate}
\item
if $p \neq q$ then there exists $\nu \in s(\mu)\Lambda$ such that $d(\nu) \geq p \lor q$ and $\nu(p,d(\nu)+p-( p \lor q)) \neq g \vert_{(\mu\nu)(q,d(\mu)+q)} \cdot \nu(q,d(\nu)+q-(p \lor q))$;
\item
if $p=q$, then for any $\nu \in s(\mu)\Lambda$ satisfying that $d(\nu) \geq p$ and that $g \vert_{(\mu\nu)(p,d(\mu)+p)} \neq 1_G$, there exists $\gamma \in s(\nu)\Lambda$ such that $\nu(p,d(\nu))\gamma \neq g \vert_{(\mu\nu)(p,d(\mu)+p)} \cdot (\nu(p,d(\nu))\gamma)$.
\end{enumerate}
\end{prop}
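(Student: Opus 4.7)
The plan is to prove the two implications by contradiction, using the $G$-aperiodic/$G$-periodic dichotomy at the level of individual infinite paths. The core tools are Lemma~\ref{L:xper}, the simple observation in the Remark after Definition~\ref{D:pf} (if $g\cdot x=x$ for all $x\in v\Lambda^\infty$ then $g=1_G$), and pseudo freeness.

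For the forward direction, assume $\Lambda$ is $G$-aperiodic and fix $\mu,g,p,q$ with $g\ne 1_G$ or $p\ne q$. If (i) fails, then for \emph{every} $\nu\in s(\mu)\Lambda$ with $d(\nu)\ge p\lor q$ the stated equality holds. I would pick a $G$-aperiodic $z\in s(\mu)\Lambda^\infty$, substitute $\nu_M:=z(0,M)$ for each $M\ge p\lor q$, and note that $h:=g\vert_{(\mu z)(q,d(\mu)+q)}$ is independent of $M$. Rewriting the equality as an identity between prefixes of $z$ and letting $M\to\infty$ componentwise yields $\sigma^p(z)=h\cdot\sigma^q(z)$, contradicting the $G$-aperiodicity of $z$ since $p\ne q$. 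If (ii) fails, fix the offending $\nu$, set $h:=g\vert_{(\mu\nu)(p,d(\mu)+p)}\ne 1_G$, pick a $G$-aperiodic $w\in s(\nu)\Lambda^\infty$, apply the equality with $\gamma=w(0,M)$, and pass to the limit to get $\nu(p,d(\nu))w=h\cdot\nu(p,d(\nu))w$. Since $\sigma^p(\nu w)=\nu(p,d(\nu))w$, the path $\nu w$ is $G$-periodic; but Lemma~\ref{L:xper} says $\nu w$ is $G$-aperiodic, contradiction.

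For the reverse direction, assume (i) and (ii), and fix $v\in\Lambda^0$. Since $\Lambda$ is row-finite, $v\Lambda^\infty$ is compact Hausdorff, hence Baire. For each $(g,p,q)$ with $g\ne 1_G$ or $p\ne q$, the set $C_{g,p,q}:=\{x\in v\Lambda^\infty:\sigma^p(x)=g\cdot\sigma^q(x)\}$ is closed by continuity, and since $G$ is countable the $G$-periodic paths in $v\Lambda^\infty$ form the countable union $\bigcup_{g,p,q}C_{g,p,q}$; it therefore suffices to show that no cylinder $Z(\mu)$ (with $r(\mu)=v$) is contained in any $C_{g,p,q}$. Assuming otherwise, the expanded relation $x(p+j,p+j+n)=g\vert_{x(q,q+j)}\cdot x(q+j,q+j+n)$, valid for all $x\in Z(\mu)$ and all $j,n$, will yield the needed contradictions. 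When $p\ne q$, take the $\nu$ supplied by (i), any $x\in Z(\mu\nu)$, and evaluate at $j=d(\mu)$, $n=d(\nu)-(p\lor q)$; the resulting subpath identity is precisely what (i) forbids for this $\nu$.

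The technically most delicate case is $p=q$ (so $g\ne 1_G$), and I expect this to be the main obstacle. Fix any $\nu\in s(\mu)\Lambda$ with $d(\nu)\ge p$ and decompose $\tau:=(\mu\nu)(p,d(\mu\nu))=\tau_1\tau_2$ with $\tau_1:=(\mu\nu)(p,d(\mu)+p)$ and $\tau_2:=\nu(p,d(\nu))$. For every $y\in s(\nu)\Lambda^\infty$ the path $x:=\mu\nu y$ lies in $Z(\mu)$, and $\sigma^p(x)=g\cdot\sigma^p(x)$ becomes $\tau y=(g\cdot\tau)(g\vert_\tau\cdot y)$; matching factors forces $g\cdot\tau=\tau$ and $g\vert_\tau\cdot y=y$ for all such $y$, so the simple observation yields $g\vert_\tau=1_G$. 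Splitting along $\tau=\tau_1\tau_2$ gives $g\cdot\tau_1=\tau_1$, $g\vert_{\tau_1}\cdot\tau_2=\tau_2$, and $g\vert_{\tau_1}\vert_{\tau_2}=1_G$. If $g\vert_{\tau_1}=1_G$, pseudo freeness applied to $(g,\tau_1)$ forces $g=1_G$, against $g\ne 1_G$. Otherwise (ii) supplies $\gamma\in s(\nu)\Lambda$ with $\tau_2\gamma\ne g\vert_{\tau_1}\cdot(\tau_2\gamma)$; but the three derived identities imply $g\vert_{\tau_1}\cdot(\tau_2\gamma)=(g\vert_{\tau_1}\cdot\tau_2)(g\vert_{\tau_1}\vert_{\tau_2}\cdot\gamma)=\tau_2\gamma$, a contradiction. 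Baire now delivers a $G$-aperiodic path in $v\Lambda^\infty$, completing the argument.
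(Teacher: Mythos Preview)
Your forward direction is essentially the contrapositive of the paper's direct construction and is fine. The reverse direction is genuinely different: the paper enumerates the countable family $\{(g^n,p^n,q^n)\}$ and builds a single $G$-aperiodic path $x=\mu^1\mu^2\cdots$ inductively, choosing $\mu^n$ at stage $n$ so as to defeat the triple $(g^n,p^n,q^n)$; you instead use the Baire category theorem on the compact space $v\Lambda^\infty$, showing each closed set $C_{g,p,q}$ has empty interior. Both are standard ways to produce a ``generic'' point avoiding a countable family of closed conditions; your route is cleaner and avoids bookkeeping, while the paper's is explicit and constructive.

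There is one small but genuine gap in your $p=q$ case. You invoke the ``simple observation'' from the Remark after Definition~\ref{D:pf} to conclude $g\vert_\tau=1_G$ from $g\vert_\tau\cdot y=y$ for all $y\in s(\nu)\Lambda^\infty$. That observation is stated for \emph{locally faithful} actions, whereas Proposition~\ref{P:Gape} assumes only pseudo freeness, so you are not entitled to $g\vert_\tau=1_G$. Fortunately you do not need it: from $g\vert_\tau\cdot y=y$ for every $y$ you get $g\vert_\tau\cdot\gamma=\gamma$ for the particular $\gamma$ supplied by (ii) (take any $y$ with $y(0,d(\gamma))=\gamma$), and then
\[
g\vert_{\tau_1}\cdot(\tau_2\gamma)=(g\vert_{\tau_1}\cdot\tau_2)(g\vert_{\tau_1}\vert_{\tau_2}\cdot\gamma)=\tau_2\,(g\vert_\tau\cdot\gamma)=\tau_2\gamma,
\]
contradicting the choice of $\gamma$. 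With this one-line replacement your argument goes through under pseudo freeness alone, matching the hypotheses of the proposition.
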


\begin{proof}
The proof of ``Only if": Fix $\mu \in \Lambda, g \in G, p,q \in \mathbb{N}^k$ with $g \neq 1_G$ or $p \neq q$.

First suppose that $p\ne q$. Since $\varphi$ is $G$-aperiodic, there exists a $G$-aperiodic path $x$ at $s(\mu)$. By Lemma~\ref{L:xper}, $\mu x$ is $G$-aperiodic as well. Then $\sigma^{d(\mu)+p}(\mu x) \neq g \vert_{(\mu x)(q,d(\mu)+q)} \cdot \sigma^{d(\mu)+q}(\mu x)$. So $\sigma^{p}( x) \neq g \vert_{(\mu x)(q,d(\mu)+q)} \cdot \sigma^{q}(x)$. It follows immediately that there exists $l \in \mathbb{N}^k$ such that $\sigma^{p}( x)(0,l) \neq g \vert_{(\mu x)(q,d(\mu)+q)} \cdot \sigma^{q}(x)(0,l)$. Then $\nu:=x(0,l+p \lor q)$ satisfies all
properties required in (i).

Now suppose that $p=q$ (and so $g\ne 1_G$). Fix $\nu \in s(\mu)\Lambda$ satisfying that $d(\nu) \geq p$ and that $g \vert_{(\mu\nu)(p,d(\mu)+p)} \neq 1_G$. Since $\varphi$ is $G$-aperiodic, there exists a $G$-aperiodic path $y$ at $s(\nu)$. Let $x:=\nu y$. By Lemma~\ref{L:xper}, $\mu x$ is $G$-aperiodic as well. Then $\sigma^{d(\mu)+p}(\mu x) \neq g \vert_{(\mu x)(p,d(\mu)+p)} \cdot \sigma^{d(\mu)+p}(\mu x)$. So there exists $l \geq d(\nu)-p$ such that $\sigma^{d(\mu)+p}(\mu x)(0,l) \neq g \vert_{(\mu x)(p,d(\mu)+p)} \cdot \sigma^{d(\mu)+p}(\mu x)(0,l)$. Then $\gamma:=y(0,p+l-d(\nu))$ satisfies the property required in (ii).

\smallskip
The proof of ``If":
We enumerate the countable set $\{(g,$ $p,q)\in G \times \mathbb{N}^k \times \mathbb{N}^k: g \neq 1_G \text{ or } p \neq q\}$ as $\{(g^n,p^n,q^n)\}_{n=1}^{\infty}$. Fix $v \in \Lambda^0$. Let $\mu^0:=v$. By our assumptions there exists a sequence $(\mu^n)_{n=0}^{\infty} \subseteq \Lambda$ satisfying that for $n \geq 1$,
\begin{itemize}
\item[(1)] $s(\mu^{n-1})=r(\mu^{n})$;
\item[(2)] $d(\mu^n) \geq p^n \lor q^n$;
\item[(3)] if $p^n \neq q^n$ then
$\mu^n(p^n,d(\mu^n)+p^n-(p^n \lor q^n))
 \neq \newline g^n  \vert_{(\mu^1 \cdots \mu^n)(q^n,d(\mu^0\cdots\mu^{n-1})+q^n)} \cdot \mu^n(q^n, d(\mu^n)+q^n-(p^n \lor q^n));
$
\item[(4)] if $p^n =q^n$ and $g^n  \vert_{(\mu^1 \cdots \mu^n)(p^n,d(\mu^0\cdots\mu^{n-1})+p^n)} \neq 1_G$, then
\[
\mu^n(p^n,d(\mu^n)) \neq g^n  \vert_{(\mu^1 \cdots \mu^n)(p^n,d(\mu^0\cdots\mu^{n-1})+p^n)} \cdot \mu^n(p^n, d(\mu^n)).
\]
\end{itemize}
The sequence $(\mu^n)_{n=1}^{\infty}$ uniquely determines an infinite path $x \in \Lambda^\infty$. Fix $g \in G,p,q \in \mathbb{N}^k$ with $g \neq 1_G$ or $p \neq q$. Then there exists $N \geq 1$ such that $(g,p,q)=(g^N,p^N,q^N)$. We aim to show that $\sigma^{p^N}(x) \neq g^N \cdot \sigma^{q^N}(x)$ and we split into three cases.

\underline{Case 1:} $p^N \neq q^N$. We calculate that
\begin{align*}
&\sigma^{p^N}(x)(d(\mu^0\cdots\mu^{N-1}),d(\mu^1\cdots\mu^{N})-(p^N \lor q^N))\\
&=x(d(\mu^0\cdots\mu^{N-1})+p^N,d(\mu^1\cdots\mu^{N})+p^N-(p^N \lor q^N))\\
&=\mu^N(p^N,d(\mu^N)+p^N-(p^N \lor q^N))\\
&\neq g^N \vert_{(\mu^1 \cdots \mu^N)(q^N,d(\mu^0\cdots\mu^{N-1})+q^N)} \cdot\mu^N(q^N,d(\mu^N)+q^N-(p^N \lor q^N))\\
&\quad (\text{by Condition }(3))\\
&=(g^N \cdot \sigma^{q^N}(x))(d(\mu^0\cdots\mu^{N-1}),d(\mu^1\cdots\mu^{N})-(p^N \lor q^N)).
\end{align*}

\underline{Case 2:} $p^N=q^N$ and $g^N \vert_{(\mu^1 \cdots \mu^N)(p^N,d(\mu^0\cdots\mu^{N-1})+p^N)} \neq 1_G$. The proof is similar to Case 1 by using Condition (4).

\underline{Case 3:} $p^N=q^N$ and $g^N \vert_{(\mu^1 \cdots \mu^N)(p^N,d(\mu^0\cdots\mu^{N-1})+p^N)} =1_G$. Notice that $g^N \neq 1_G$. Suppose that $\sigma^{p^N}(x) =g^N \cdot \sigma^{p^N}(x)$ for a contradiction. Then $\sigma^{p^N}(x)(0$, $d(\mu^0\cdots\mu^{N-1})) =g^N \cdot \sigma^{p^N}(x)(0,d(\mu^0\cdots\mu^{N-1}))$. So
\[
g^N \cdot ((\mu^1 \cdots \mu^N)(p^N, d(\mu^0\cdots\mu^{N-1})+p^N))=(\mu^1 \cdots \mu^N)(p^N,d(\mu^0\cdots\mu^{N-1})+p^N).
\]
Since $\varphi$ is pseudo free, $g^N=1_G$ which is a contradiction. Therefore $\sigma^{p^N}(x) \neq g^N \cdot \sigma^{p^N}(x)$.

Therefore, $\sigma^{p^N}(x) \neq g^N \cdot \sigma^{q^N}(x)$ and $\Lambda$ is $G$-aperiodic.
\end{proof}

\begin{prop}\label{P:Gapecyc}
Let $\Lambda$ be a pseudo free self-similar action over $G$.
Then $\Lambda$ is $G$-aperiodic if and only if
every cycline triple is trivial.
\end{prop}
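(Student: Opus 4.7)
The plan is to prove both implications by contrapositive, handling the easy direction directly from Lemma~\ref{L:equi}(ii) and Lemma~\ref{L:xper}, and the hard direction via a Baire category argument on the infinite path space.

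For the forward direction ($\Lambda$ $G$-aperiodic $\Rightarrow$ every cycline triple trivial), suppose $(\mu, g, \nu)$ is a cycline triple. By Lemma~\ref{L:equi}(ii), for every $x \in s(\nu)\Lambda^\infty$ the path $y := \nu x = \mu(g\cdot x)$ satisfies $\sigma^{d(\mu)}(y) = g \cdot \sigma^{d(\nu)}(y)$. If $d(\mu) \neq d(\nu)$ this exhibits $y$ as $G$-periodic; Lemma~\ref{L:xper} then forces $x = \sigma^{d(\nu)}(y)$ itself to be $G$-periodic for every such $x$, contradicting the existence of a $G$-aperiodic path at $s(\nu)$. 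So $d(\mu) = d(\nu)$; comparing initial segments of degree $d(\mu)$ in $\mu(g\cdot x) = \nu x$ then forces $\mu = \nu$, and the equation collapses to $g\cdot x = x$ for all $x \in s(\nu)\Lambda^\infty$. If we further had $g \neq 1_G$, then $\sigma^{d(\nu)}(\nu x) = g \cdot \sigma^{d(\nu)}(\nu x)$ would make $\nu x$ (hence $x$) $G$-periodic, again a contradiction. Therefore $(\mu, g, \nu) = (\nu, 1_G, \nu)$.

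For the reverse direction, suppose $\Lambda$ is not $G$-aperiodic, so some $v \in \Lambda^0$ has every $x \in v\Lambda^\infty$ $G$-periodic. Since $\Lambda$ is row-finite and source-free, $v\Lambda^\infty$ is a non-empty compact Hausdorff (hence Baire) space. For each $(g, p, q) \in G \times \bN^k \times \bN^k$ with $g \neq 1_G$ or $p \neq q$, the set $F_{g,p,q} := \{x \in v\Lambda^\infty : \sigma^p(x) = g \cdot \sigma^q(x)\}$ is closed, and by hypothesis $v\Lambda^\infty = \bigcup_{(g,p,q)} F_{g,p,q}$ as a countable union. Baire category yields some $(g, p, q)$ and $\mu \in v\Lambda$ with $Z(\mu) \subseteq F_{g,p,q}$, and by extending $\mu$ along a longer path we may assume $d(\mu) \geq p \vee q$. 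Unfolding $\sigma^p(\mu y) = g \cdot \sigma^q(\mu y)$ through the self-similar action rule gives
\[
\mu(p, d(\mu))\,y \;=\; \bigl(g \cdot \mu(q, d(\mu))\bigr)\bigl(g|_{\mu(q, d(\mu))} \cdot y\bigr) \quad \text{for all } y \in s(\mu)\Lambda^\infty,
\]
so that $(\beta, h, \alpha) := (g \cdot \mu(q, d(\mu)),\, g|_{\mu(q, d(\mu))},\, \mu(p, d(\mu)))$ is cycline (the source condition $s(\beta) = h \cdot s(\alpha)$ follows from $g \cdot s(\xi) = g|_\xi \cdot s(\xi)$, a direct consequence of Definition~\ref{D:ss}(i) applied to $\xi = \mu(q, d(\mu))$). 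If $p \neq q$ then $d(\alpha) = d(\mu) - p$ and $d(\beta) = d(\mu) - q$ differ, so the triple is non-trivial; if $p = q$ and $g \neq 1_G$, triviality would require $g \cdot \alpha = \alpha$ with $g|_\alpha = 1_G$, contradicting pseudo freeness.

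The main technical obstacle is the Baire category step together with the bookkeeping of restriction/action identities needed to convert the uniform period on $Z(\mu)$ into an honest cycline triple (in particular, verifying the source condition); once the triple is constructed, ruling out triviality in each of the two subcases is immediate from pseudo freeness.
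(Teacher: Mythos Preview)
Your proof is correct. The forward direction is essentially the paper's argument with a cosmetic difference: you conclude $g=1_G$ by noting that $g\cdot x=x$ with $g\ne 1_G$ would make every $x\in s(\nu)\Lambda^\infty$ $G$-periodic, whereas the paper fixes a single $G$-aperiodic $x$, deduces $g\cdot x(0,d(\nu))=x(0,d(\nu))$ and $g|_{x(0,d(\nu))}=1_G$, and then invokes pseudo freeness. (Your citation of Lemma~\ref{L:equi}(ii) is slightly off---the identity $\sigma^{d(\mu)}(y)=g\cdot\sigma^{d(\nu)}(y)$ for $y=\nu x$ is immediate from the cycline definition itself---but the claim is correct.)

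The backward direction is genuinely different. The paper proceeds via Proposition~\ref{P:Gape}: it assumes all cycline triples are trivial and verifies the finite-path conditions there by contradiction, in each case manufacturing a cycline triple from a putative counterexample. Your Baire category argument bypasses Proposition~\ref{P:Gape} entirely: countability of $G$ and compactness of $v\Lambda^\infty$ give a cylinder $Z(\mu)$ on which a single period $(g,p,q)$ holds uniformly, and the cycline triple drops out in one line. This is shorter and more self-contained; the paper's route has the side benefit of establishing the finite-path characterization, which is used independently. Both arguments need pseudo freeness only in the $p=q$, $g\ne 1_G$ case to rule out triviality of the constructed triple, and you handle this correctly.
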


\begin{proof}
First of all, suppose that $\Lambda$ is $G$-aperiodic. Let $(\mu,g,\nu)$ be cycline. Since $\Lambda$ is $G$-aperiodic, there exists a $G$-aperiodic path $x\in s(\nu)\Lambda^\infty$. We calculate that
\begin{align*}
\sigma^{d(\mu)}(x)=\sigma^{d(\mu)+d(\nu)}(\nu x)=\sigma^{d(\mu)+d(\nu)}(\mu(g \cdot  x))=g \vert_{x(0,d(\nu))} \cdot \sigma^{d(\nu)}(x).
\end{align*}
So $d(\mu)=d(\nu)$ and $g \vert_{x(0,d(\nu))}=1_G$. It then follows from  $\mu(g \cdot x)=\nu x$ that $\mu=\nu$ and $g \cdot x=x$.
In particular, $g\cdot x(0,d(\nu))=x(0,d(\nu))$. Since the self-similar action is pseudo free, we deduce that $g=1_G$.

We now suppose that every cycline triple is trivial. Fix $\mu \in \Lambda$, $g \in G$, $p,q \in \mathbb{N}^k$ with  $g \neq 1_G$ or $p \neq q$. In what follows, we verify that
Proposition~\ref{P:Gape} (i)-(ii) are satisfied. We split into two cases.

Case 1. $p \neq q$. To the contrary, assume that, for any $\nu \in s(\mu)\Lambda$ with $d(\nu) \geq p \lor q$, we have $\nu(p,d(\nu)+p-( p \lor q)) = g \vert_{(\mu\nu)(q,d(\mu)+q)} \cdot \nu(q,d(\nu)+q-(p \lor q))$. Then $\sigma^p(x)= g \vert_{(\mu x)(q,d(\mu)+q)} \cdot \sigma^q(x)$ for any $x \in s(\mu)\Lambda^\infty$.
Take an arbitrary $x \in s(\mu)\Lambda^\infty$. We claim that $(g \vert_{(\mu x)(q,d(\mu)+q)} \cdot x(q,p\lor q),g \vert_{(\mu x)(q,d(\mu)+p \lor q)},x(p, p \lor q))$ is a cycline triple.
For this, let $z \in x(p \lor q, p \lor q)\Lambda^\infty$. Then
\begin{align*}
x(p,p \lor q) z&=\sigma^p(x(0,p \lor q)z)
\\&= g \vert_{(\mu x)(q,d(\mu)+q)} \cdot \sigma^q(x(0,p \lor q)z)
\\&=g \vert_{(\mu x)(q,d(\mu)+q)} \cdot (x(q,p \lor q)z)
\\&=(g \vert_{(\mu x)(q,d(\mu)+q)} \cdot x(q,p\lor q))(g \vert_{(\mu x)(q,d(\mu)+p \lor q)} \cdot z).
\end{align*}
Hence $(g \vert_{(\mu x)(q,d(\mu)+q)} \cdot x(q,p\lor q),g \vert_{(\mu x)(q,d(\mu)+p \lor q)},x(p, p \lor q))$ is cycline. By the assumption, one necessarily has
$p=q$, which is a contradiction. Hence Proposition~\ref{P:Gape} (i) holds.

Case 2. $p=q$. To the contrary, assume that there exists $\alpha \in s(\mu)\Lambda$ with $d(\alpha) \geq p$ and $ g \vert_{(\mu\alpha)(p,d(\mu)+p)} \neq 1_G$, such that
$\alpha(p,d(\alpha))\beta = g \vert_{(\mu\alpha)(p,d(\mu)+p)} \cdot (\alpha(p,d(\alpha))\beta)$ for any $\beta \in s(\alpha)\Lambda$. Then we obtain a cycline triple
$(g \vert_{(\mu\alpha)(p,d(\mu)+p)} \cdot \alpha(p,d(\alpha)), g \vert_{(\mu\alpha)(p,d(\mu\alpha))} ,\alpha(p,d(\alpha)))$. By the assumption, we have
\[
g \vert_{(\mu\alpha)(p,d(\mu)+p)} \cdot \alpha(p,d(\alpha))=\alpha(p,d(\alpha)) \text{ and }
g \vert_{(\mu\alpha)(p,d(\mu)+p)} \vert_{\alpha(p,d(\alpha))}=1_G.
\]
So $g \vert_{(\mu\alpha)(p,d(\mu)+p)}=1_G$ as the self-similar action is pseudo free. This yields a contradiction. Therefore Proposition~\ref{P:Gape} (ii) holds.
\end{proof}

Combining Propositions~\ref{P:Gape} and \ref{P:Gapecyc}, one has

\begin{thm}
\label{T:chaGape}
Let $\Lambda$ be a pseudo free self-similar $k$-graph over $G$. Then the following are equivalent.
\begin{enumerate}
\item $\Lambda$ is $G$-aperiodic;
\item for any $\mu \in \Lambda, g \in G, p,q \in \mathbb{N}^k$ with $p \neq q$ or $g \neq 1_G$, Proposition~\ref{P:Gape} (i)-(ii) hold;
\item all cycline triples are trivial.
\end{enumerate}
\end{thm}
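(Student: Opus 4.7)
The statement is a three-way equivalence theorem that combines the two preceding propositions. My plan is essentially to observe that Propositions \ref{P:Gape} and \ref{P:Gapecyc} together give the equivalence immediately, with no further argument required.

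First, I would invoke Proposition \ref{P:Gape} (which uses the pseudo freeness hypothesis on $(G,\Lambda)$). That proposition asserts precisely that (i) holds if and only if the finite-path conditions stated in items (i) and (ii) of Proposition \ref{P:Gape} are satisfied for every $\mu \in \Lambda$, $g \in G$, and $p,q \in \mathbb{N}^k$ with $p \neq q$ or $g \neq 1_G$. This is exactly the content of condition (ii) of the theorem, so the equivalence (i)~$\Leftrightarrow$~(ii) is immediate.

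Next, I would invoke Proposition \ref{P:Gapecyc} (again using the pseudo freeness). That proposition states that $\Lambda$ is $G$-aperiodic if and only if every cycline triple in $\C_{G,\Lambda}$ is trivial. This gives the equivalence (i)~$\Leftrightarrow$~(iii) directly.

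Combining these two equivalences yields (i)~$\Leftrightarrow$~(ii)~$\Leftrightarrow$~(iii), completing the proof. There is no real obstacle here: the theorem is simply a clean restatement packaging the two preceding propositions into a single ``characterization'' result for later reference. The only thing worth flagging is that both ingredient propositions require the pseudo freeness assumption, so the hypothesis in the theorem statement matches what is needed to invoke them.
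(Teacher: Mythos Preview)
Your proposal is correct and matches the paper's approach exactly: the paper simply states that the theorem follows by combining Propositions~\ref{P:Gape} and \ref{P:Gapecyc}, which is precisely what you do.
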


%%%%%%%%%%%%%%%%%%
\section{The $G$-Periodicity Theory of Self-Similar $k$-Graphs}
\label{S:perthe}

Pseudo free and locally faithful self-similar actions behave extremely well, and form a very interesting class of self-similar actions.
One of most important features of these actions is that their properties mainly inherit from the ambient $k$-graphs.
% much more than the groups.
In this section, we study their $G$-periodicity theory thoroughly. The main results of this section are Theorems~\ref{T:Pergroup} and \ref{T:embed}, which state that, for each such self-similar action $(G,\Lambda)$, its periodicity $\Per_{G,\Lambda}$ forms a subgroup of $\bZ^k$, and the group C*-algebra $\ca(\Per_{G,\Lambda})$ is isomorphic to the $C^*$-subalgebra of $\O_{G,\Lambda}$ generated by the central unitaries $V_{m,n}$ ($m,n\in\bN^k$ with $m-n\in\Per_{G,\Lambda}$).

First notice that, for a locally faithful action $(G,\Lambda)$, if $(\mu,g,\nu)\in\C_{G,\Lambda}$, then $(\mu,\nu)$ is a cycline pair for $\Lambda$ if and only if $g=1_G$.

The following is a simple but key observation, which plays a vital role in the $G$-periodicity theory for locally faithful self-similar actions $(G,\Lambda)$.

\begin{lem}\label{L:unicyc}
%Let $\Lambda$ be a $k$-graph, let $G$ be a group, and let $\varphi:G \to \Aut(\Lambda)$
Let $\Lambda$ be a locally faithful self-similar $k$-graph over $G$.
\begin{enumerate}
\item
If $(\mu, g_1, \nu_1)$ and $(\mu, g_2, \nu_2)$ are cycline triples
such that $d(\nu_1)=d(\nu_2)$, then $g_1=g_2$ and $\nu_1=\nu_2$.

\item If $(\mu_1', g_1', \nu')$ and $(\mu_2', g_2', \nu')$ are cycline triples such that $d(\mu_1')=d(\mu_2')$,
then $g_1'=g_2'$ and $\mu_1'=\mu_2'$.
\end{enumerate}
\end{lem}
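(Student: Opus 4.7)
The plan for (i) is to collapse the two cycline relations to a single equation that pins down $\nu_1,\nu_2$ pathwise and then to peel off $g_1=g_2$ from local faithfulness. Set $h:=g_2^{-1}g_1$. The source compatibility built into the two triples says $s(\mu)=g_1\cdot s(\nu_1)=g_2\cdot s(\nu_2)$, whence $h\cdot s(\nu_1)=s(\nu_2)$; in particular, $x\mapsto h\cdot x$ maps $s(\nu_1)\Lambda^\infty$ into $s(\nu_2)\Lambda^\infty$. For any $x\in s(\nu_1)\Lambda^\infty$, applying the cycline identity for $(\mu,g_1,\nu_1)$ to $x$ and the cycline identity for $(\mu,g_2,\nu_2)$ to $h\cdot x$ yields
\[
\nu_1 x \;=\; \mu(g_1\cdot x) \;=\; \mu\bigl(g_2\cdot(h\cdot x)\bigr) \;=\; \nu_2(h\cdot x).
\]

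Since $d(\nu_1)=d(\nu_2)$, the unique factorization property of the $k$-graph $\Lambda$ lets me read off the prefix of length $d(\nu_1)$ on each side of the equality $\nu_1 x=\nu_2(h\cdot x)$: I obtain $\nu_1=\nu_2$, and the matching tails give $x=h\cdot x$. As $x\in s(\nu_1)\Lambda^\infty$ was arbitrary, the observation stated in Remark~(1) after Definition~\ref{D:pf} (which follows from local faithfulness) forces $h=1_G$, i.e., $g_1=g_2$. This proves (i).

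For (ii), I would simply pass to the reversed cycline triples furnished by Lemma~\ref{L:equi}(i): $(\nu',(g_1')^{-1},\mu_1')$ and $(\nu',(g_2')^{-1},\mu_2')$ are both cycline, they share the first entry $\nu'$, and their third entries have the same degree $d(\mu_1')=d(\mu_2')$. Part~(i) applied to this pair immediately gives $(g_1')^{-1}=(g_2')^{-1}$ and $\mu_1'=\mu_2'$, i.e., $g_1'=g_2'$ and $\mu_1'=\mu_2'$. The only point that requires any care in the whole argument is the splitting step in the display: one must know that an equality of infinite paths can be truncated unambiguously at any common finite length, which is precisely the $k$-graph factorization property; everything else is formal manipulation of the self-similarity axioms (Definition~\ref{D:ss}) together with the local faithfulness hypothesis.
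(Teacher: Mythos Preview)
Your proof is correct and essentially identical to the paper's for part~(i): both derive $\nu_2(h\cdot x)=\nu_1 x$ with $h=g_2^{-1}g_1$, split off $\nu_1=\nu_2$ by equal degree, and invoke local faithfulness to force $h=1_G$. For part~(ii) the paper simply says the argument is similar, whereas you reduce to (i) via Lemma~\ref{L:equi}(i); this is a clean shortcut and there is no circularity since Lemma~\ref{L:equi} precedes Lemma~\ref{L:unicyc}.
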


\begin{proof}
The proofs of (i) and (ii) are similar. In what follows, we prove (i) only.
For any $x \in s(\nu_1)\Lambda^\infty$, we have
\[
\nu_2((g_2^{-1}g_1) \cdot x)=\mu (g_2 \cdot ((g_2^{-1}g_1) \cdot x))=\mu(g_1 \cdot x)=\nu_1 x.
\]
Then $\nu_1=\nu_2$ as $d(\nu_1)=d(\nu_2)$, and $(g_2^{-1}g_1) \cdot x=x$ implying $g_1=g_2$ due to
local faithfulness.
\end{proof}

\subsection{Periodicity vs local periodicity vs cycline triples}
Let $(G,\Lambda)$ be a  pseudo free and locally faithful self-similar action with $\Lambda$ strongly connected.
Denote by
\[
\Per_{G,\Lambda}:=\{d(\mu)-d(\nu): (\mu, g, \nu)\in \C_{\Lambda, G}\}
\]
the periodicity from cycline triples.
For $v\in \Lambda^0$, define the local periodicity of $(G,\Lambda)$ at $v$ as
\[
\Sigma^v_{G,\Lambda}:=\{(p,q,g)\in \bN^k\times \bN^k\times G: \sigma^p(x)=\sigma^q(g\cdot x) \text{ for all }x\in v\Lambda^\infty\},
\]
and the periodicity of $(G,\Lambda)$ as
\[
\Sigma_{G,\Lambda}:=\bigcup_{v\in\Lambda^0}\Sigma^v_{G,\Lambda}.
\]
In this subsection, we show that these three types of periodicity can be unified (cf. \cite{DY091, DY09, HLRS15, Yan15}). This
turns out to be very useful for describing the KMS states of $\O_{G,\Lambda}$ in Section \ref{S:KMS}.

The theorem below is a variant of Theorem \ref{T:chaGape} combining with the definition of $\Per_{G,\Lambda}$.

\begin{thm}
\label{T:chaGape1}
Let $\Lambda$ be a pseudo free, locally faithful self-similar $k$-graph over $G$. Then
\begin{center}
$\Lambda$ is $G$-aperiodic $\iff$all cycline triples are trivial$\iff$$\Per_{G,\Lambda}=\{0\}$.
\end{center}
\end{thm}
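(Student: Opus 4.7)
The first equivalence is already available from Theorem~\ref{T:chaGape}, so the only new work is to tie $\Per_{G,\Lambda}=\{0\}$ into the chain. My plan is therefore to prove the two implications
\[
\text{(all cycline triples trivial)}\;\Longleftrightarrow\; \Per_{G,\Lambda}=\{0\}
\]
separately, and then invoke Theorem~\ref{T:chaGape} to close the loop.

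The forward direction is essentially a definition check. If every cycline triple is trivial, then each $(\mu,g,\nu)\in\C_{G,\Lambda}$ has the form $(\mu,1_G,\mu)$, so $d(\mu)-d(\nu)=0$; hence $\Per_{G,\Lambda}\subseteq\{0\}$. Since the trivial cycline triples $(v,1_G,v)$ for $v\in\Lambda^0$ always exist, $0\in\Per_{G,\Lambda}$ and we get equality.

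For the reverse direction, suppose $\Per_{G,\Lambda}=\{0\}$ and let $(\mu,g,\nu)$ be an arbitrary cycline triple. Then $d(\mu)-d(\nu)=0$, so $d(\mu)=d(\nu)$. Now $(\nu,1_G,\nu)$ is also cycline with the same third coordinate $\nu$, and its first coordinate has the same degree as $\mu$. This is the precise setup of Lemma~\ref{L:unicyc}(ii): two cycline triples with common $\nu'=\nu$ and $d(\mu_1')=d(\mu_2')$ force $\mu_1'=\mu_2'$ and $g_1'=g_2'$. Consequently $\mu=\nu$ and $g=1_G$, i.e.\ $(\mu,g,\nu)$ is trivial. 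This is where the local faithfulness assumption does its crucial work, entering via Lemma~\ref{L:unicyc}; pseudo freeness is used upstream to make the whole framework (in particular Theorem~\ref{T:chaGape}) available.

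I do not expect any real obstacle here: the theorem is a clean packaging of Theorem~\ref{T:chaGape} together with the uniqueness lemma, and the potential trap is merely to remember that $\Per_{G,\Lambda}$ is defined as a set of degree differences rather than of triples, so the argument only returns $d(\mu)=d(\nu)$ and one still needs Lemma~\ref{L:unicyc} to promote this to $\mu=\nu$ and $g=1_G$.
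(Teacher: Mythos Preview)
Your proof is correct and matches the paper's approach: the paper simply remarks that the theorem is ``a variant of Theorem~\ref{T:chaGape} combining with the definition of $\Per_{G,\Lambda}$'' without spelling out details, and your argument fills in exactly the expected steps. The only nontrivial direction---promoting $d(\mu)=d(\nu)$ to $\mu=\nu$ and $g=1_G$---is handled cleanly by your appeal to Lemma~\ref{L:unicyc}(ii), which is precisely where local faithfulness enters.
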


\begin{lem}\label{global per}
Let $(G,\Lambda)$ be a self-similar action with $\Lambda$ strongly connected, and $p,q\in \bN^k$.
If there is $g\in G$ such that $(p,q,g)\in \Sigma_{G,\Lambda}^v$ for some $v\in\Lambda^0$, then for every $w\in \Lambda^0$, there exists $h\in G$ such that $(p,q,h)\in \Sigma_{G,\Lambda}^w$.
\end{lem}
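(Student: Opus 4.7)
The plan is to explicitly produce an $h\in G$ witnessing periodicity at $w$. Since $\Lambda$ is strongly connected, I would pick any path $\alpha\in v\Lambda w$ and set $h:=g|_\alpha$. The goal is then to verify that $(p,q,h)\in\Sigma^w_{G,\Lambda}$, i.e., that $\sigma^p(y)=\sigma^q(h\cdot y)$ for every $y\in w\Lambda^\infty$.

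The key idea is to lift a given $y\in w\Lambda^\infty$ to $\alpha y\in v\Lambda^\infty$, apply the hypothesis there, and then strip off the prefix $\alpha$ using the shift. Concretely, the hypothesis at $v$ gives $\sigma^p(\alpha y)=\sigma^q(g\cdot\alpha y)$, and the self-similar action identities rewrite $g\cdot\alpha y=(g\cdot\alpha)(g|_\alpha\cdot y)=(g\cdot\alpha)(h\cdot y)$. I would then apply $\sigma^{d(\alpha)}$ to both sides. Since shifts commute (they are endomorphisms of $\Lambda^\infty$ with $\sigma^{m}\sigma^{n}=\sigma^{m+n}$), and since both $\alpha$ and $g\cdot\alpha$ have degree $d(\alpha)$, the left side collapses to $\sigma^p(\sigma^{d(\alpha)}(\alpha y))=\sigma^p(y)$ and the right to $\sigma^q(\sigma^{d(\alpha)}((g\cdot\alpha)(h\cdot y)))=\sigma^q(h\cdot y)$, which is exactly what is needed.

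The main point to watch for is whether $h$ inadvertently depends on $y$, but this is clear from the construction: $h=g|_\alpha$ depends only on $g$ and the fixed $\alpha$, both chosen before $y$. One might also worry about degree constraints on $\alpha$ (for instance, whether one needs $d(\alpha)\geq p\vee q$ to compare shift degrees), but this turns out to be unnecessary, because the argument uses only the additivity of shifts together with the trivial fact that $\sigma^{d(\alpha)}$ peels off any degree-$d(\alpha)$ prefix, regardless of how $d(\alpha)$ compares to $p$ or $q$. Overall, the proof is symbolic manipulation with shifts and the self-similar action, and I do not anticipate any serious technical difficulty.
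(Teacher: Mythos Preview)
Your proposal is correct and is essentially identical to the paper's proof: both pick a path $\lambda\in v\Lambda w$ via strong connectedness, set $h:=g|_\lambda$, prepend $\lambda$ to an arbitrary $x\in w\Lambda^\infty$, apply the hypothesis at $v$, and then use shift additivity together with $g\cdot(\lambda x)=(g\cdot\lambda)(h\cdot x)$ to strip off the degree-$d(\lambda)$ prefix. The paper compresses this into the single chain $\sigma^p(x)=\sigma^{p+d(\lambda)}(\lambda x)=\sigma^{d(\lambda)}(\sigma^{q}(g \cdot (\lambda x)))=\sigma^q(h \cdot x)$, but the content is the same.
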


\begin{proof}
Let $w \in \Lambda^0$. Since $\Lambda$ is strongly connected, there exists $\lambda \in v \Lambda w$ (\cite{HLRS15}). Let $h:=g \vert_\lambda$. Then for any $x \in w \Lambda^\infty$, we have
\[
\sigma^p(x)=\sigma^{p+d(\lambda)}(\lambda x)=\sigma^{d(\lambda)}(\sigma^{q}(g \cdot (\lambda x)))=\sigma^q(h \cdot x).
\]
So we are done.
\end{proof}

\begin{lem}\label{L:1toall}
Let $\Lambda$ be a self-similar action with $\Lambda$ strongly connected, and
let $p,q,p',q' \in \mathbb{N}^k$ with $p-q=p'-q'$. If there is $g\in G$ such that $(p,q,g)\in \Sigma_{G,\Lambda}^v$ for some $v\in \Lambda^0$, then for every $w\in \Lambda^0$, there exists $h\in G$ such that $(p',q',h)\in \Sigma_{G,\Lambda}^w$.
\end{lem}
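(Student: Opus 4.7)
The plan is to reduce the difference-preserving condition $p-q=p'-q'$ to two elementary operations that each leave the difference invariant: shifting both indices up by a common element of $\bN^k$, and translating the base vertex (which is already handled by Lemma \ref{global per}). The guiding observation is that if $(p,q,g)\in\Sigma^v_{G,\Lambda}$ then shifting upward by any $n\in\bN^k$ works with the same group element $g$ (since $\sigma$ commutes with itself), whereas shifting downward by some $n^-\in\bN^k$ costs us: one peels off an initial path $\alpha$ of degree $n^-$, replacing $g$ by its restriction $g|_\alpha$.

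Concretely, I would first set $n:=p'-p=q'-q\in\bZ^k$ and write $n=n^+-n^-$ with $n^+,n^-\in\bN^k$ the positive and negative parts. Then $(p'',q''):=(p+n^+,q+n^+)=(p'+n^-,q'+n^-)$ lies in $\bN^k\times\bN^k$ and provides a common upper pair. Applying $\sigma^{n^+}$ to the identity $\sigma^p(x)=\sigma^q(g\cdot x)$ valid on $v\Lambda^\infty$ yields $(p'',q'',g)\in\Sigma^v_{G,\Lambda}$, and Lemma \ref{global per} transports this to the vertex $w$: there exists $g_w\in G$ with
\[
\sigma^{p'+n^-}(x)=\sigma^{q'+n^-}(g_w\cdot x) \qforal x\in w\Lambda^\infty.
\]

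Next I would shift down by $n^-$. Since $\Lambda$ is strongly connected it is source-free (\cite[Lemma~2.1]{HLRS15}), so one can choose some $\alpha\in w\Lambda^{n^-}$. For any $y\in s(\alpha)\Lambda^\infty$, set $x:=\alpha y\in w\Lambda^\infty$; using $d(\alpha)=n^-$ together with the self-similarity identity $g_w\cdot(\alpha y)=(g_w\cdot\alpha)(g_w|_\alpha\cdot y)$, a short calculation gives $\sigma^{p'+n^-}(\alpha y)=\sigma^{p'}(y)$ and $\sigma^{q'+n^-}(g_w\cdot(\alpha y))=\sigma^{q'}(g_w|_\alpha\cdot y)$. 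Hence $(p',q',g_w|_\alpha)\in\Sigma^{s(\alpha)}_{G,\Lambda}$, and one last application of Lemma \ref{global per} produces an $h\in G$ with $(p',q',h)\in\Sigma^w_{G,\Lambda}$, as required.

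The main delicate point will be the downward shift: one must use the cocycle identity $g\cdot(\alpha y)=(g\cdot\alpha)(g|_\alpha\cdot y)$ carefully to confirm that the prefix $\alpha$ is absorbed by $\sigma^{n^-}$ and the tail equation survives in $\Sigma^{s(\alpha)}_{G,\Lambda}$ with the new element $g_w|_\alpha$. The upward shift and the two vertex translations are formal once Lemma \ref{global per} is in hand.
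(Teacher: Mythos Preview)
Your proof is correct, but the paper's is shorter and avoids the positive/negative decomposition of $n=p'-p$. The paper picks a single path $\lambda\in\Lambda^p w$ (of degree $p$, ending at $w$), applies Lemma~\ref{global per} once to obtain $(p,q,g')\in\Sigma^{r(\lambda)}_{G,\Lambda}$, and then sets $h:=g'|_\lambda$; a one-line chain using the identity $p'+q=p+q'$ then gives
\[
\sigma^{p'}(x)=\sigma^{p'+p}(\lambda x)=\sigma^{p'}\big(\sigma^{q}(g'\cdot(\lambda x))\big)=\sigma^{p+q'}(g'\cdot(\lambda x))=\sigma^{q'}(h\cdot x)
\]
for all $x\in w\Lambda^\infty$, so $(p',q',h)\in\Sigma^w_{G,\Lambda}$ directly. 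In effect, the choice $d(\lambda)=p$ simultaneously performs your vertex transport and your downward shift, so no separate up/down decomposition and no second appeal to Lemma~\ref{global per} are needed. Your modular decomposition is perfectly valid and perhaps more transparent about which operations preserve membership in $\Sigma$, but it costs an extra invocation of Lemma~\ref{global per} and the bookkeeping of $n^\pm$.
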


\begin{proof}
Fix $w \in \Lambda^0$ arbitrarily. Since $\Lambda$ is strongly connected, there exists $\lambda \in \Lambda^p w$ (\cite[Lemma 2.1]{HLRS15}). By Lemma~\ref{global per}, there exists $g' \in G$ such that $(p,q,g')\in\Sigma_{G,\Lambda}^{r(\lambda)}$.
Let $h:=g' \vert_\lambda$. Then for any $x \in w \Lambda^\infty$,
\[
\sigma^{p'}(x)=\sigma^{p'+p}(\lambda x)=\sigma^{p'}(\sigma^{q}(g'\cdot(\lambda x)))=\sigma^{p+q'}(g'\cdot(\lambda x))=\sigma^{q'}(h \cdot x).
\]
So $(p',q',h)\in\Sigma_{G,\Lambda}^w$.
\end{proof}

\begin{prop}\label{P:unicyc}
Let $(G,\Lambda)$ be a locally faithful self-similar action with $\Lambda$ strongly connected.
 Fix $p,q,p',q'$ $\in \mathbb{N}^k$ with $p-q=p'-q'$.
Suppose that $(p,q,g)\in \Sigma_{G,\Lambda}^v$ for some $g\in G$ and $v\in \Lambda^0$.
Then for any $\mu \in \Lambda^{p'}$, there exist unique $h \in G$ and $\nu \in \Lambda^{q'}$ such that $(\mu,h,\nu)$ is a cycline triple; and
similarly for any $\nu' \in \Lambda^{q'}$, there exist unique $h' \in G$ and $\mu' \in \Lambda^{p'}$ such that $(\mu',h',\nu')$ is a cycline triple.
\end{prop}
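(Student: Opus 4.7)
The plan is to derive uniqueness directly from Lemma~\ref{L:unicyc} and then construct existence by invoking Lemma~\ref{L:1toall} to translate the periodicity hypothesis to the vertex $r(\mu)$ and extracting a cycline triple from the resulting factorization. For uniqueness: if $(\mu,h_i,\nu_i)$, $i=1,2$, are cycline triples with $d(\nu_i)=q'$, then Lemma~\ref{L:unicyc}(i) forces $h_1=h_2$ and $\nu_1=\nu_2$, and uniqueness for the second assertion follows from Lemma~\ref{L:unicyc}(ii).

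For existence, Lemma~\ref{L:1toall} (using $p-q=p'-q'$) supplies $h_0\in G$ with $(p',q',h_0)\in\Sigma^{r(\mu)}_{G,\Lambda}$, so that $\sigma^{p'}(y)=\sigma^{q'}(h_0\cdot y)$ for every $y\in r(\mu)\Lambda^\infty$. Fix an auxiliary $x\in s(\mu)\Lambda^\infty$ and apply this to $y=\mu x$: since $\sigma^{p'}(\mu x)=x$, the infinite path $h_0\cdot(\mu x)$ factors as $\beta_x\cdot x$, where $\beta_x:=(h_0\cdot(\mu x))(0,q')=h_0\cdot(\mu x)(0,q')\in\Lambda^{q'}$. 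Set $\nu:=h_0^{-1}\cdot\beta_x=(\mu x)(0,q')$ and $h:=h_0|_\nu$, so that $r(\nu)=r(\mu)$, $d(\nu)=q'$, and $s(\nu)=(h_0|_\nu)^{-1}\cdot s(\mu)$. Applying $h_0^{-1}$ to the factorization and using the cocycle identity $(h_0|_\nu)^{-1}=h_0^{-1}|_{\beta_x}$ produces the identity $\mu(h\cdot z)=\nu z$ for $z:=(h_0|_\nu)^{-1}\cdot x\in s(\nu)\Lambda^\infty$. As $x$ ranges over $s(\mu)\Lambda^\infty$, the bijection $x\mapsto(h_0|_\nu)^{-1}\cdot x$ sweeps out all of $s(\nu)\Lambda^\infty$, so the cycline identity will hold for every such $z$ as soon as $\nu$ is shown to be independent of the choice of $x$.

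The main obstacle is this independence. When $q'\le p'$ componentwise, $\nu=(\mu x)(0,q')=\mu(0,q')$ is manifestly independent of $x$. In general, writing $m:=p'\wedge q'$, $a:=p'-m$, $b:=q'-m$ (so $a\wedge b=0$), the $k$-graph factorization $\mu=\mu(0,m)\cdot\mu(m,p')$ lets me absorb a common degree-$m$ prefix into both $\mu$ and $\nu$, reducing the task to constructing a cycline triple of degrees $a$ and $b$ over the vertex $\mu(m,m)$. In this reduced setting I invoke Lemma~\ref{L:1toall} once more at the vertex $\mu(m,m)$ with the same shift, and combine it with the local faithfulness of $(G,\Lambda)$ to force uniqueness of the relevant prefix of $x$, making $\nu$ well-defined. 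Finally, the second assertion follows from the first by symmetry: the substitution $x\mapsto g^{-1}\cdot x$ in the given periodicity immediately yields $(q,p,g^{-1})\in\Sigma^{g\cdot v}_{G,\Lambda}$, so the same construction applies with the roles of $p'$ and $q'$ interchanged; equivalently, one invokes the bijection $(\mu,h,\nu)\leftrightarrow(\nu,h^{-1},\mu)$ supplied by Lemma~\ref{L:equi}(i).
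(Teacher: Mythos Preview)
Your uniqueness argument via Lemma~\ref{L:unicyc} is fine, as is the reduction of the second assertion to the first via Lemma~\ref{L:equi}. The gap is in existence: you correctly identify that the crux is showing $\nu=(\mu x)(0,q')$ is independent of the auxiliary infinite path $x$, but you do not actually prove this. Your reduction to the case $a\wedge b=0$ (with $a=p'-m$, $b=q'-m$, $m=p'\wedge q'$) does not help, since that is precisely the hard case: when $a\wedge b=0$ and $b\ne0$, the degree-$b$ prefix of $\mu' x$ is determined by $x(0,b)$ through the commuting-square factorization, and different choices of $x(0,b)$ a priori give different answers. Invoking Lemma~\ref{L:1toall} at $\mu(m,m)$ only reproduces the same periodicity relation you already have, and it is not clear what work ``local faithfulness'' is supposed to do here --- local faithfulness enters in Lemma~\ref{L:unicyc} to prove uniqueness \emph{given} existence, but it does not by itself force $(\mu x)(0,q')=(\mu x')(0,q')$ for distinct $x,x'$. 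So the argument as written is circular: independence of $\nu$ would follow from the existence of a cycline triple (since then $\mu(h\cdot z)=\nu z$ forces $(\mu x)(0,q')=\nu$ for all $x$), but you are trying to use independence to \emph{establish} existence.

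The paper sidesteps this entirely with a finite-path trick: rather than applying the periodicity at $r(\mu)$ to infinite paths $\mu x$, it first uses strong connectivity to choose a finite path $\lambda\in\Lambda^{q'}r(\mu)$, applies Lemma~\ref{L:1toall} at $r(\lambda)$ to get $g'$ with $(p',q',g')\in\Sigma_{G,\Lambda}^{r(\lambda)}$, and then reads off $\nu:=(\lambda\mu)(p',p'+q')$ from the factorization of the \emph{finite} path $\lambda\mu$. Since $\lambda\mu=\omega\nu$ with $d(\omega)=p'$, the computation $\nu x=\sigma^{p'}(\lambda\mu x)=\sigma^{q'}(g'\cdot(\lambda\mu x))=g'|_\lambda\cdot(\mu x)$ immediately yields a cycline triple with first entry $\mu$, and no independence issue ever arises. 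If you want to salvage your approach, you would need to supply a genuine argument for why the periodicity forces $(\mu x)(0,q')$ to be constant in $x$; but the finite-prefix route is both shorter and cleaner.
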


\begin{proof}
Let $\mu\in\Lambda^{p'}$.
As before, since $\Lambda$ is strongly connected, there exists $\lambda \in \Lambda^{q'}r(\mu)$ (\cite{HLRS15}). By Lemma~\ref{L:1toall}, there exists $g' \in G$, such that
$(p',q',g')\in\Sigma_{G,\Lambda}^{r(\lambda)}$.
Let $\omega:=(\lambda\mu)(0,p')$ and $\nu:=(\lambda\mu)(p'p'+q')$. Then for any $x \in s(\nu)\Lambda^\infty$,
\[
\nu x=\sigma^{p'}(\omega \nu x)=\sigma^{q'}(g'\cdot (\omega\nu x))=\sigma^{q'}(g'\cdot (\lambda\mu x))=g' \vert_\lambda \cdot (\mu x).
\]
So $(\mu,g'\vert_\lambda^{-1} \vert_\nu^{-1},g' \vert_\lambda^{-1} \cdot \nu)$ is a cycline triple. The uniqueness follows from Lemma~\ref{L:unicyc}.

The second statement can be proved similarly.
\end{proof}

\begin{rem}
\label{R:thetapq}
Let $p,q\in \bN^k$ be such that $(p,q,g)\in \Sigma_{G,\Lambda}$ for some $g\in G$. By Proposition \ref{P:unicyc}, there are a bijection $\theta_{p,q}: \Lambda^p\to \Lambda^q$
and a mapping $\phi_{p,q}: \Lambda^p\to G$  such that
$(\mu, \phi_{p,q}(\mu), \theta_{p,q}(\mu))$ is cycline.
When $G$ is trivial, it is easy to see that $\phi_{p,q}$ is trivial and that $\theta_{p,q}$ is nothing but the one defined in \cite[Lemma 5.1]{HLRS15}.
\end{rem}

\begin{prop}
\label{P:2per}
Suppose that $(G,\Lambda)$ is a pseudo free and locally faithful self-similar action with $\Lambda$ strongly connected.
Then the following statements are equivalent:
\begin{enumerate}
\item $\Lambda$ is $G$-periodic;
\item there are $\mu\ne \nu \in \Lambda$ and $g\in G$ such that $(\mu,g,\nu)\in\C_{G,\Lambda}$;
\item there are $m\ne n\in \bN^k$ and $h\in G$ such that $(m,n,h)\in\Sigma_{G,\Lambda}$;
%$\sigma^m(x)=\sigma^n(h\cdot x)$ for all $x\in v\Lambda^\infty$;
\item for every $v\in\Lambda^0$, there are $m\ne n\in \bN^k$ and $h\in G$ such that $(m,n,h)\in \Sigma_{G,\Lambda}^v$;
% $\sigma^m(x)=\sigma^n(h\cdot x)$ for all $x\in v\Lambda^\infty$;
\item for every $x\in \Lambda^\infty$, there are $m\ne n\in \bN^k$ and $h\in G$ such that $\sigma^m(x)=\sigma^n(h\cdot x)$;
\item for every $x\in \Lambda^\infty$, there are $m\ne n\in \bN^k$ and $h\in G$ such that $\sigma^m(x)=h\cdot\sigma^n(x)$.
% (that is, every $x\in\Lambda^\infty$ is $G$-periodic).
%\item[(vi)] For every $v\in \Lambda^0$, there are $m\ne n\in \bN^k$ and $h\in G$ such that $\sigma^m(x)=h\cdot \sigma^n(x)$ for all $x\in v\Lambda^\infty$.
\end{enumerate}
\end{prop}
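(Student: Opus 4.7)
My plan is to establish the circular chain of implications (i)$\implies$(ii)$\implies$(iii)$\implies$(iv)$\implies$(v)$\implies$(vi)$\implies$(i). The anchor is (i)$\iff$(ii), which I read off Theorem~\ref{T:chaGape1}: $\Lambda$ is $G$-periodic exactly when some cycline triple $(\mu,g,\nu)$ is nontrivial (i.e., $\mu\ne\nu$ or $g\ne 1_G$). To upgrade to a cycline triple with $\mu\ne\nu$, I argue that if $\mu=\nu$ and $g\ne 1_G$, then $\mu(g\cdot x)=\mu x$ for all $x\in s(\mu)\Lambda^\infty$; cancelling $\mu$ via the factorization property yields $g\cdot x=x$ on all of $s(\mu)\Lambda^\infty$, which by local faithfulness (Remark~(1) after Definition~\ref{D:pf}) forces $g=1_G$, a contradiction.

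For (ii)$\implies$(iii), I first observe that $\mu\ne\nu$ forces $d(\mu)\ne d(\nu)$: otherwise, evaluating both sides of $\mu(g\cdot x)=\nu x$ at the initial segment of degree $d(\mu)=d(\nu)$ gives $\mu=\nu$. Lemma~\ref{L:equi}(ii) then yields $(d(\mu),d(\nu),g)\in \Sigma_{G,\Lambda}^{s(\nu)}$ with $d(\mu)\ne d(\nu)$. The implication (iii)$\implies$(iv) is immediate from Lemma~\ref{L:1toall} with $(p',q')=(p,q)$. For (iv)$\implies$(v), I apply (iv) at $v=r(x)$ for the given $x\in\Lambda^\infty$. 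For (v)$\implies$(vi), I use the identity $\sigma^n(h\cdot x)=h|_{x(0,n)}\cdot \sigma^n(x)$ (which follows directly from Definition~\ref{D:ss}(i)) and set $h':=h|_{x(0,n)}$. Finally, (vi) says every $x\in\Lambda^\infty$ is $G$-periodic (since $m\ne n$), so no vertex admits a $G$-aperiodic infinite path, whence $\Lambda$ is $G$-periodic by Definition~\ref{D:Gape}.

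The main subtlety, and essentially the only nontrivial step, is tracking the strict inequality ``$\ne$'' as one passes between the finite-path description (cycline triples) and the infinite-path description (shift relations). Concretely, the two small arguments that require the standing hypotheses are: producing a cycline triple with $\mu\ne\nu$ from any nontrivial one (which uses \emph{local faithfulness}), and then deducing $d(\mu)\ne d(\nu)$ from $\mu\ne\nu$ (which uses the \emph{factorization property} of $\Lambda$). Once these are in place, the remaining implications are routine manipulations of the self-similar structure, with Lemma~\ref{L:1toall} providing the transfer of local periodicity across $\Lambda^0$ via strong connectedness.
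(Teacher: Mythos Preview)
Your proof is correct and follows essentially the same circular chain of implications as the paper's own proof, invoking Theorem~\ref{T:chaGape1}, Lemma~\ref{L:equi}, Lemma~\ref{L:1toall}, and the definition of $G$-aperiodicity at the same points. You are simply more explicit about two small steps the paper leaves implicit: that a cycline triple $(\mu,g,\mu)$ forces $g=1_G$ by local faithfulness, and that $\mu\ne\nu$ in a cycline triple forces $d(\mu)\ne d(\nu)$ by comparing initial segments.
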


\begin{proof}
For (i)$\Rightarrow$(ii), notice that if $(\mu, g, \mu)$ is cycline triple, then $g=1_G$ due to the local faithfulness.
So one obtains (ii) by Theorem \ref{T:chaGape1}.
(ii)$\Rightarrow$(iii) and (iii)$\Rightarrow$(iv) are from Lemma \ref{L:equi} and Lemma \ref{L:1toall}, respectively;
(iv)$\Rightarrow$(v) is trivial;
(v)$\Rightarrow$(vi) is due to $\sigma^m(x)=\sigma^n(h\cdot x)=h|_{x(0,n)}\sigma^n(x)$; and
(vi)$\Rightarrow$(i) is from the definition of $G$-aperiodicity of $\Lambda$.
\end{proof}

\subsection{The periodicity group C*-algebra $\ca(\Per_{G,\Lambda})$}
We are now ready to present the main results of this section.

%%%%%%%%%%%%%%%%
\begin{thm}\label{T:Pergroup}
%Let $\Lambda$ be a strongly connected $k$-graph, let $G$ be a group, and let $\varphi:G \to \Aut(\Lambda)$ be a locally faithful self-similar action.
Suppose that $(G,\Lambda)$ is a locally faithful self-similar action with $\Lambda$ strongly connected.
Then
\begin{align*}
\Per_{G,\Lambda}&=\{p-q: \text{there is } g \in G \text{ such that } (p,q,g)\in \Sigma_{G,\Lambda}\}
\\&=\{p-q: \text{for any } v \in \Lambda^0\, \text{there is } g \in G \text{ such that }(p,q,g)\in\Sigma_{G,\Lambda}^v\}.
\end{align*}
Moreover, $\Per_{G,\Lambda}$ is a subgroup of $\mathbb{Z}^k$.
\end{thm}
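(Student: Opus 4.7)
The plan is to prove the two displayed set equalities via a cyclic chain $A\subseteq B\subseteq C\subseteq A$ (writing $A=\Per_{G,\Lambda}$ and $B,C$ for the second and third sets in the statement), and then to deduce the subgroup property from characterization $C$ together with Lemma~\ref{L:equi}.

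The three inclusions are all quick once the machinery of Section~\ref{S:perthe} is in hand. For $A\subseteq B$: if $(\mu,g,\nu)$ is cycline, then Lemma~\ref{L:equi}(ii) yields $(d(\mu),d(\nu),g)\in\Sigma^{s(\nu)}_{G,\Lambda}\subseteq\Sigma_{G,\Lambda}$, so $d(\mu)-d(\nu)\in B$. For $B\subseteq C$: this is precisely Lemma~\ref{global per}, which promotes a single triple $(p,q,g)\in\Sigma^v_{G,\Lambda}$ to some $(p,q,h)\in\Sigma^w_{G,\Lambda}$ at every vertex $w$. For $C\subseteq A$: given $p-q\in C$, pick any $\mu\in\Lambda^p$ (such $\mu$ exists since strongly connected $k$-graphs are source-free) and apply Proposition~\ref{P:unicyc} with $p'=p,\,q'=q$ to obtain a cycline triple $(\mu,h,\nu)$ with $d(\nu)=q$; thus $p-q=d(\mu)-d(\nu)\in A$.

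For the subgroup claim, $0\in\Per_{G,\Lambda}$ via the trivial cycline triple $(v,1_G,v)$, and closure under negation is immediate from Lemma~\ref{L:equi}(i). Closure under addition is the main step, which I would carry out via the $\Sigma$-characterization $C$. Given $p_i-q_i\in\Per_{G,\Lambda}$ for $i=1,2$ and a fixed $v\in\Lambda^0$, characterization $C$ supplies $g_v^{(1)}\in G$ with $(p_1,q_1,g_v^{(1)})\in\Sigma^v_{G,\Lambda}$; setting $w:=g_v^{(1)}\cdot v$, it then supplies $g_w^{(2)}\in G$ with $(p_2,q_2,g_w^{(2)})\in\Sigma^w_{G,\Lambda}$. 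A short chain of shift identities
\[
\sigma^{p_1+p_2}(x)=\sigma^{p_2}\bigl(\sigma^{q_1}(g_v^{(1)}\cdot x)\bigr)=\sigma^{q_1}\bigl(\sigma^{q_2}((g_w^{(2)}g_v^{(1)})\cdot x)\bigr)=\sigma^{q_1+q_2}\bigl((g_w^{(2)}g_v^{(1)})\cdot x\bigr), \quad x\in v\Lambda^\infty,
\]
then shows $(p_1+p_2,q_1+q_2,g_w^{(2)}g_v^{(1)})\in\Sigma^v_{G,\Lambda}$, so $(p_1-q_1)+(p_2-q_2)\in B=\Per_{G,\Lambda}$.

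The main subtlety is that after one shift the path $g_v^{(1)}\cdot x$ lives over the moved vertex $g_v^{(1)}\cdot v$ rather than over $v$, so the second $\Sigma$-relation must be invoked at a different basepoint. This is precisely why establishing the ``for every $v$'' characterization $C$ before attempting closure is strategically important; once $C$ is available, the remaining assembly from Lemma~\ref{L:equi}, Lemma~\ref{global per}, and Proposition~\ref{P:unicyc} is routine.
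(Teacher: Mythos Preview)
Your proposal is correct and follows essentially the same approach as the paper. The paper organizes the set equalities as two separate equalities ($A=B$ via Lemma~\ref{L:equi} and Proposition~\ref{P:unicyc}, then $B=C$ via Lemma~\ref{global per}) rather than your cyclic chain, but the same three lemmas do the same work; for the subgroup claim the paper performs the identical shift computation (with the two $\Sigma$-relations applied in the opposite order and the basepoint moved accordingly), merely leaving the $0\in\Per_{G,\Lambda}$ and negation-closure steps implicit.
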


\begin{proof}
The first equality easily follows from Lemma \ref{L:equi} and
Proposition~\ref{P:unicyc}, and the second one is from Lemma \ref{global per}.

To see that $\Per_{G,\Lambda}$ is a subgroup of $\mathbb{Z}^k$, it is sufficient to show that $\Per_{G,\Lambda}$ is closed under addition.
For this, let $p,q,m,n \in \mathbb{N}^k$ satisfy that for any $v \in \Lambda^0$, there exist $g_v,h_v \in G$ such that
$(p,q,g_v),(m,n,h_v)\in\Sigma_{G,\Lambda}^v$.
 Fix $v \in \Lambda^0$.
Then for any $x \in v \Lambda^\infty$, we have
\begin{align*}
\sigma^{p+m}(x)
&=\sigma^p(\sigma^{n}(h_v \cdot x))=\sigma^{n}(\sigma^p(h_v \cdot x))\\
&=\sigma^{n}(\sigma^q((g_{h_v \cdot v} h_v)\cdot x))=\sigma^{q+n}((g_{h_v \cdot v}h_v) \cdot x).
\end{align*}
So $(p+m,q+n,g_{h_v \cdot v}h_v)\in \Sigma_{G,\Lambda}^v$.
Therefore $\Per_{G,\Lambda}$ is a subgroup of $\mathbb{Z}^k$.
\end{proof}

For convenience, we write $(\mu,g,\nu)\in \C_{G,\Lambda}^{m,n}$ if $(\mu,g,\nu)\in \C_{G,\Lambda}$ with $d(\mu)=m$ and $d(\nu)=n$.

\begin{thm}\label{T:Vrep}
Let $\Lambda$ be a strongly connected $k$-graph with $\vert \Lambda^0 \vert<\infty$.
Suppose that $(G,\Lambda)$ be a locally faithful self-similar action.
Then for any $m,n\in \mathbb{N}^k$ with $m-n\in \Per_{G,\Lambda}$, the operator
\[
V_{m,n}:=\sum_{(\mu,g,\nu)\in \C_{G,\Lambda}^{m,n}}s_\mu u_g s_\nu^*
\]
is a central unitary in $\O_{G,\Lambda}$, called a {\rm periodicity unitary in $\O_{G,\Lambda}$}.

Moreover, the unitaries $V_{m,n}$ share the following properties:
\begin{itemize}
\item
$V_{\ell,\ell}=1$ for all $\ell\in\bN^k$;
\item
$V_{m,n}^*=V_{n,m}$ for all $m,n\in\bN^k$ with $m-n\in \Per_{G,\Lambda}$;
\item
$V_{p,q}=V_{m,n}$ for all $p,q,m,n\in\bN^k$ with $m-n=p-q\in \Per_{G,\Lambda}$;
\item
$V_{p,q}V_{m,n}=V_{p+m,q+n}$ for all $p,q,m,n\in\bN^k$ with $m-n$, $p-q\in \Per_{G,\Lambda}$.
\end{itemize}
\end{thm}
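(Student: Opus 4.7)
The plan rests on Proposition~\ref{P:unicyc}: for every $\mu\in\Lambda^m$ there is a unique pair $(g,\nu)\in G\times\Lambda^n$ with $(\mu,g,\nu)\in\C_{G,\Lambda}^{m,n}$, and symmetrically for the second coordinate. Since $\Lambda$ is row-finite with $|\Lambda^0|<\infty$, each $\Lambda^m$ is finite, so $V_{m,n}$ is a well-defined finite sum. My first step is to prove the shifting identity in (iii), namely $V_{m,n}=V_{m+\ell,n+\ell}$ for every $\ell\in\bN^k$. For this I would exhibit a bijection $((\mu,g,\nu),\beta)\mapsto(\mu(g\cdot\beta),g|_\beta,\nu\beta)$ from $\{((\mu,g,\nu),\beta):(\mu,g,\nu)\in\C_{G,\Lambda}^{m,n},\,\beta\in s(\nu)\Lambda^\ell\}$ onto $\C_{G,\Lambda}^{m+\ell,n+\ell}$; the forward direction is a direct covariance check, while for surjectivity, given $(\tilde\mu,\tilde g,\tilde\nu)\in\C_{G,\Lambda}^{m+\ell,n+\ell}$ I would set $\nu:=\tilde\nu(0,n)$, $\beta:=\tilde\nu(n,n+\ell)$, apply Proposition~\ref{P:unicyc} to $\nu$ to recover the unique $(\mu,g)$, and invoke Lemma~\ref{L:unicyc}(ii) to conclude $\tilde\mu=\mu(g\cdot\beta)$ and $\tilde g=g|_\beta$ (local faithfulness is essential here). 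With the bijection in hand, each summand $s_{\mu(g\cdot\beta)}u_{g|_\beta}s_{\nu\beta}^*$ of $V_{m+\ell,n+\ell}$ rewrites as $s_\mu u_g s_\beta s_\beta^* s_\nu^*$ via $s_{g\cdot\beta}u_{g|_\beta}=u_g s_\beta$; summing over $\beta\in s(\nu)\Lambda^\ell$ and collapsing $\sum_\beta s_\beta s_\beta^*=s_{s(\nu)}$ via (CK4) yields $V_{m,n}$. Property (iii) for arbitrary $p,q,m,n$ with $p-q=m-n$ follows by shifting both pairs to a common $\ell\in\bN^k$ with $\ell\geq p\vee m$.

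Properties (i), (ii), and (iv) then drop out. For (i), Lemma~\ref{L:unicyc} combined with the trivial cycline triples $(\mu,1_G,\mu)$ forces $\C_{G,\Lambda}^{\ell,\ell}=\{(\mu,1_G,\mu):\mu\in\Lambda^\ell\}$ (again via local faithfulness), whereupon (CK4) gives $V_{\ell,\ell}=\sum_{\mu\in\Lambda^\ell}s_\mu s_\mu^*=1$. Property (ii) is immediate from Lemma~\ref{L:equi}(i) applied termwise. For (iv) I would use (iii) to align degrees by writing $V_{p,q}=V_{p+m,q+m}$ and $V_{m,n}=V_{q+m,q+n}$; the orthogonality of degree-$(q+m)$ range projections forces $\nu_1=\mu_2$ in the product, the vertex projection $s_{s(\nu_1)}$ absorbs via $s_v u_g=u_g s_{g^{-1}\cdot v}$, and chaining the two cycline identities yields $(\mu_1,g_1 g_2,\nu_2)\in\C_{G,\Lambda}^{p+m,q+n}$, with the map $(T_1,T_2)\mapsto(\mu_1,g_1 g_2,\nu_2)$ bijective by Proposition~\ref{P:unicyc}. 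Unitarity is then a formal consequence of (i)--(iv): $V_{m,n}V_{m,n}^*=V_{m,n}V_{n,m}=V_{m+n,n+m}=V_{m+n,m+n}=1$, and similarly on the other side.

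It remains to verify centrality, which reduces to commuting $V_{m,n}$ with the generators $\{s_\lambda\}_{\lambda\in\Lambda}$ and $\{u_h\}_{h\in G}$. For a generator $s_\lambda$ with $\ell:=d(\lambda)$, property (iii) gives $V_{m,n}s_\lambda=V_{m+\ell,n+\ell}s_\lambda$; then $s_{\tilde\nu}^*s_\lambda\neq 0$ forces $\tilde\nu=\lambda\tau$ for some $\tau\in s(\lambda)\Lambda^n$, and the cycline identity for $(\tilde\mu,\tilde g,\tilde\nu)$ combined with the degree-$d(\lambda)$ $k$-graph factorization forces $\tilde\mu=\lambda\eta$ with $(\eta,\tilde g,\tau)\in\C_{G,\Lambda}^{m,n}$. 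The $s_\lambda$ can then be pulled to the left and the surviving terms reassemble as $s_\lambda V_{m,n}$ (terms in $s_\lambda V_{m,n}$ with $r(\eta)\neq s(\lambda)$ vanish on both sides). For a generator $u_h$, the covariance relations $u_h s_\mu=s_{h\cdot\mu}u_{h|_\mu}$ and $s_\nu^*u_h^{-1}=u_{(h|_\nu)^{-1}}s_{h\cdot\nu}^*$ give $u_h V_{m,n}u_h^{-1}=\sum_{(\mu,g,\nu)\in\C_{G,\Lambda}^{m,n}}s_{h\cdot\mu}u_{h|_\mu g(h|_\nu)^{-1}}s_{h\cdot\nu}^*$; the key verification is that $(h\cdot\mu,h|_\mu g(h|_\nu)^{-1},h\cdot\nu)\in\C_{G,\Lambda}^{m,n}$, which I would check by applying $h\cdot$ to the cycline identity $\mu(g\cdot x)=\nu x$ after the substitution $x=(h|_\nu)^{-1}\cdot y$, using $h|_\nu\cdot s(\nu)=h\cdot s(\nu)$. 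The induced self-map of $\C_{G,\Lambda}^{m,n}$ is a bijection (inverse via $h^{-1}$), yielding $u_h V_{m,n}u_h^{-1}=V_{m,n}$.

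The main technical hurdle is the shifting identity (iii): it requires the non-obvious surjectivity of the bijection described above, and Lemma~\ref{L:unicyc}(ii) together with local faithfulness are what make that step work. Once (iii) is in hand, the remaining properties amount to bookkeeping dictated by the self-similar covariance relations and the uniqueness encoded in Proposition~\ref{P:unicyc}, while centrality reduces to the same shifting trick (for $s_\lambda$) and a symmetry of $\C_{G,\Lambda}^{m,n}$ under the induced $G$-action (for $u_h$).
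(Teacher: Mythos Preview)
Your argument is correct and rests on the same ingredients as the paper's proof (Proposition~\ref{P:unicyc}, Lemma~\ref{L:unicyc}, Lemma~\ref{L:equi}, and the covariance relation $u_g s_\mu=s_{g\cdot\mu}u_{g|_\mu}$), but the logical ordering differs in two small ways worth noting. First, the paper establishes unitarity at the outset by directly computing $V_{m,n}^*V_{m,n}$ and $V_{m,n}V_{m,n}^*$, and only afterwards records the identities (i)--(iv); you instead prove the shift identity (iii) first and then obtain unitarity as a purely formal consequence of (i), (ii), (iv). Your route makes transparent that the $V_{m,n}$ form a unitary representation of $\Per_{G,\Lambda}$, at the cost of front-loading the bijection argument for (iii). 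Second, for the product rule (iv) the paper multiplies $V_{p,q}$ and $V_{m,n}$ directly, expanding $s_\nu^*s_\alpha$ via $\Lambda^{\min}(\nu,\alpha)$, and then appeals to the already-established unitarity of the product to conclude it equals $V_{p+m,q+n}$; your degree-alignment trick (replacing $V_{p,q}$ by $V_{p+m,q+m}$ and $V_{m,n}$ by $V_{q+m,q+n}$ so that the inner degrees match) collapses $s_{\nu_1}^*s_{\mu_2}$ to a Kronecker delta and gives a cleaner bijection with $\C_{G,\Lambda}^{p+m,q+n}$, avoiding any circular dependence on unitarity. The centrality computations are essentially identical in both proofs.
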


\begin{proof}
%%%%%%%%
By Proposition~\ref{P:unicyc}, for any $\mu \in \Lambda^m$, there exist unique $g_\mu \in G,\nu_\mu \in \Lambda^n$ such that $(\mu,g_\mu,\nu_\mu)$ is a cycline triple; and for any $\nu \in \Lambda^n$, there exist unique $g_\nu \in G,\mu_\nu \in \Lambda^m$ such that $(\mu_\nu,g_\nu,\nu)$ is a cycline triple. So
\[
V_{m,n}=\sum_{\mu \in \Lambda^m}s_\mu u_{g_\mu} s_{\nu_\mu}^*=\sum_{\nu \in \Lambda^n}s_{\mu_\nu}u_{g_\nu} s_\nu^*.
\]
We compute that
\begin{align*}
V_{m,n}^*V_{m,n}&=\sum_{\nu \in \Lambda^n}s_\nu u_{g_\nu}^* s_{s(\mu_\nu)} u_{g_\nu} s_\nu^*=\sum_{\nu \in \Lambda^n}s_\nu s_\nu^*=1,\\
V_{m,n}V_{m,n}^*&=\sum_{\mu \in \Lambda^m}s_\mu u_{g_\mu} s_{s(\nu_\mu)} u_{g_\mu}^* s_\mu^*=\sum_{\mu \in \Lambda^m}s_\mu s_\mu^*=1.
\end{align*}
So $V_{m,n}$ is a unitary in $\O_{G,\Lambda}$.

It is straightforward to verify that $V_{m,n}^*=V_{n,m}$, $V_{\ell,\ell}=1$, $V_{m+\ell,n+\ell}=V_{m,n}$ for any $\ell \in \mathbb{N}^k$.

In what follows, we show that $V_{m,n}$ is central in $\O_{G,\Lambda}$. First notice that for any $\lambda \in \Lambda$, we have
\begin{align*}
V_{m,n}s_\lambda
&=V_{m+d(\lambda),n+d(\lambda)}s_\lambda\\
&=\mathop{\sum_{\omega \in r(\mu)\Lambda^{d(\lambda)}}}_{(\mu,g,\nu)\in \C_{G,\Lambda}^{m,n}}s_{\omega\mu}u_{g}s_{\omega\nu}^*s_\lambda\\
&=\mathop{\sum_{r(\mu)=s(\lambda)}}_{(\mu,g,\nu)\in \C_{G,\Lambda}^{m,n}}s_{\lambda\mu}u_{g} s_\nu^*\\
& =s_\lambda V_{m,n} \ (\text{by CK1}),
\end{align*}
where the 2ed ``=" used the simple fact that $(\mu',g , \nu')\in\C_{G,\Lambda}$ with $d(\mu')=m + d(\lambda)$ and $d(\nu')=d(\nu)+d(\lambda)$
$\Leftrightarrow \mu'=\omega\mu$, $\nu'=\omega\nu$ with $d(\omega)=d(\lambda)$ and $(\mu, g,\nu) \in \C_{G,\Lambda}$.

Now for any $g \in G$, we calculate that
\begin{align*}
u_gV_{m,n}u_g^*&=\sum_{(\mu,h,\nu)\in\C_{G,\Lambda}^{m,n}}s_{g \cdot \mu} u_{g\vert_\mu h(g\vert_\nu)^{-1}} s_{g \cdot \nu}^*=V_{m,n}
\end{align*}
as $(\mu, h, \nu)\in\C_{G,\Lambda}\iff (g \cdot \mu,g\vert_\mu h( g\vert_\nu)^{-1},g \cdot \nu) \in \C_{G,\Lambda}$.

Therefore, $V_{m,n}$ is central in $\O_{G,\Lambda}$.

It remains to show the last property of $V_{m,n}$. For this, let $p,q,m,n\in\bN^k$ with $m-n$, $p-q\in \Per_{G,\Lambda}$.
We compute that
\begin{align*}
&V_{p,q}V_{m,n}\\
&=\sum_{(\mu,g,\nu)\in \C_{G,\Lambda}^{p,q}}
      \sum_{(\alpha,h,\beta)\in \C_{G,\Lambda}^{m,n}}s_\mu u_g s_\nu^*s_\alpha u_h s_\beta^*\\
&=\sum_{(\mu,g,\nu)\in \C_{G,\Lambda}^{p,q}}
     \sum_{(\alpha,h,\beta) \in \C_{G,\Lambda}^{m,n}}
     \mathop{\sum_{\nu\lambda=\alpha\omega}}_{\lambda \in \Lambda^m, \omega\in\Lambda^q}\,
      s_{\mu (g\cdot \lambda)}u_{g \vert_\lambda h \vert_{h^{-1} \cdot \omega}}s_{\beta (h^{-1} \cdot \omega)}^*\\
&=V_{p+m,q+n}
\end{align*}
as  $(\mu (g\cdot \lambda),g \vert_\lambda h \vert_{h^{-1} \cdot \omega} ,\beta (h^{-1} \cdot \omega)) \in \C_{G,\Lambda}^{p+m,q+n}$ and $V_{p,q}V_{m,n}$ is unitary.
\end{proof}

\begin{thm}
\label{T:embed}
Let $(G,\Lambda)$ be a  pseudo free and locally faithful self-similar action with $\Lambda$ strongly connected and $\vert \Lambda^0 \vert<\infty$.
Then
\[
\ca(\Per_{G,\Lambda})\cong \ca(\{V_{m,n}: m,n\in\bN^k, \, m-n\in\Per_{G,\Lambda}\}).
\]
\end{thm}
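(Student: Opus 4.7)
The plan is to construct a canonical surjective $*$-homomorphism from $\ca(\Per_{G,\Lambda})$ onto $\ca(\{V_{m,n}\})$ via the universal property of the group C*-algebra, and then to establish injectivity by exhibiting a state on $\O_{G,\Lambda}$ that pulls back to the canonical (Plancherel) trace of $\ca(\Per_{G,\Lambda})$. Since $\Per_{G,\Lambda}$ is a subgroup of $\bZ^k$ by Theorem~\ref{T:Pergroup}, it is discrete abelian, hence amenable, so its Plancherel trace is faithful on $\ca(\Per_{G,\Lambda})$, and this faithfulness is the engine that drives injectivity.

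First I would build the surjection. The third bullet of Theorem~\ref{T:Vrep} says that $V_{m,n}$ depends only on $z := m-n \in \Per_{G,\Lambda}$, so one may unambiguously set $V_z := V_{m,n}$. The remaining bullets of Theorem~\ref{T:Vrep} then translate to $V_0 = 1$, $V_{-z} = V_z^*$, and $V_z V_w = V_{z+w}$, which is precisely the statement that $z \mapsto V_z$ is a unitary representation of $\Per_{G,\Lambda}$ in $\O_{G,\Lambda}$. The universal property of $\ca(\Per_{G,\Lambda})$ therefore yields a $*$-homomorphism $\pi : \ca(\Per_{G,\Lambda}) \to \ca(\{V_{m,n}\})$ with $\pi(\delta_z) = V_z$, which is surjective by the choice of target.

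Next I would prove injectivity using the gauge action. The defining relations of $\O_{G,\Lambda}$ in Definition~\ref{D:O} are homogeneous once one assigns degree $d(\mu)$ to $s_\mu$ and degree $0$ to $u_g$, so the universal property supplies a strongly continuous gauge action $\gamma : \bT^k \to \Aut(\O_{G,\Lambda})$ satisfying $\gamma_w(s_\mu u_g s_\nu^*) = w^{d(\mu)-d(\nu)} s_\mu u_g s_\nu^*$, and hence $\gamma_w(V_z) = w^z V_z$. Averaging over $\bT^k$ gives a faithful conditional expectation $\Phi := \int_{\bT^k} \gamma_w \, dw$ onto the fixed-point algebra $\O_{G,\Lambda}^\gamma$, with $\Phi(V_z) = \delta_{z,0}\cdot 1$. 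Picking any state $\omega$ on $\O_{G,\Lambda}^\gamma$ and setting $\tilde\tau := \omega \circ \Phi$ produces a state on $\O_{G,\Lambda}$ for which $\tilde\tau(V_z) = \delta_{z,0}$. Then the pullback $\tilde\tau \circ \pi$ coincides with the canonical trace $\tau$ on $\ca(\Per_{G,\Lambda})$ characterized by $\tau(\delta_z) = \delta_{z,0}$. If $\pi(a) = 0$, then $\tau(a^*a) = \tilde\tau(\pi(a)^*\pi(a)) = 0$, and faithfulness of $\tau$ (from amenability) forces $a = 0$, so $\pi$ is injective and hence an isomorphism.

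The only potentially subtle point is verifying the existence of the gauge action at this level of generality (no amenability of $G$ is assumed here), which reduces to checking that the relations $u_{gh}=u_gu_h$ and $u_gs_\mu = s_{g\cdot\mu}u_{g|_\mu}$ of Definition~\ref{D:O} are preserved by $s_\mu \mapsto w^{d(\mu)} s_\mu$, $u_g \mapsto u_g$; both sides of the second relation get multiplied by $w^{d(\mu)}$ since $d(g\cdot\mu)=d(\mu)$. With that in hand, the argument is a clean packaging of three standard ingredients: the universal property of the group C*-algebra, the gauge-action conditional expectation, and the faithfulness of the canonical trace on an amenable group C*-algebra. I do not anticipate any serious technical obstacle beyond this bookkeeping.
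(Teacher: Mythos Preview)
Your argument is correct and follows essentially the same strategy as the paper: build the surjection $\pi$ from the universal property of $\ca(\Per_{G,\Lambda})$ using the relations in Theorem~\ref{T:Vrep}, and then prove injectivity by showing that the faithful canonical trace $\tau$ on $\ca(\Per_{G,\Lambda})$ factors through $\pi$. The only cosmetic difference is the source of the expectation that implements this factoring: the paper quotes an expectation $E_2$ on $\O_{G,\Lambda}$ from \cite[Theorem~6.13]{LY17_2} and then invokes the commuting-square criterion \cite[Proposition~3.11]{Kat03}, whereas you manufacture the needed state directly from the gauge action by averaging over $\bT^k$ and composing with an arbitrary state on the fixed-point algebra. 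Your route is a touch more self-contained (it avoids the external citations), but the mechanism---annihilating $V_z$ for $z\neq 0$ and using faithfulness of the Plancherel trace on an abelian group C*-algebra---is identical.
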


\begin{proof}
Let $\fA:= \ca(\{V_{m,n}: m,n\in\bN^k, \, m-n\in\Per_{G,\Lambda}\})$.
Denote by $\{v_z:z \in \Per_{G,\Lambda}\}$ the set of generators of $\ca(\Per_{G,\Lambda})$. By the universal property of $\ca(\Per_{G,\Lambda})$
and Theorem \ref{T:Vrep}, there exists a homomorphism
$\iota:\ca(\Per_{G,\Lambda}) \to \fA$
such that $\iota(v_{p-q})=V_{p,q}$ whenever $p,q\in \bN^k$ with $p-q \in \Per_{G,\Lambda}$. By \cite[Proposition~4.1.9]{BO08}, there exists a faithful expectation 
(actually, a faithful tracial state) $E_1:\ca(\Per_{G,\Lambda}) \to \mathbb{C}$ such that $E_1(v_0)=1$ and $E_1(v_z)=0$ for all $0\ne z \in \Per_{G,\Lambda}$. As shown in the proof of \cite[Theorem~6.13]{LY17_2}, there exists an expectation $E_2:\mathcal{O}_{G,\Lambda} \to \fA$ such that for any $\mu,\nu \in \Lambda,g \in G$ with $s(\mu)=g \cdot s(\nu)$,
\begin{align*}
E_2(s_\mu u_g s_\nu^*)= \begin{cases}
    s_\mu s_\mu^*  &\text{ if $\mu=\nu \text{ and }g=1_G$} \\
    0 &\text{ otherwise}.
\end{cases}
\end{align*}
So the restriction of $E_2$ to $\fA$ is also an expectation. Since $\iota \circ E_1=E_2 \circ \iota$, by \cite[Proposition~3.11]{Kat03} $\iota$ is injective.
\end{proof}

%%%%%%%%%%%%%%%%

A simple but useful observation about periodicity unitaries is the following identities: If $(\mu,g,\nu)\in \C_{G,\Lambda}^{m,n}$, then
\begin{align}
\label{E:Vmn}
s_\mu u_g s_\nu^*= s_\mu s_\mu^* V_{m,n}=V_{m,n}s_\nu s_\nu^*.
\end{align}

%%%%%%%%%%%%%%%%
\section{Cartan Subalgebras of $\O_{G,\Lambda}$}
\label{S:Cartan}

Let $(G,\Lambda)$ be a pseudo free and locally faithful self-similar action with $\Lambda$ strongly connected and $\vert \Lambda^0 \vert<\infty$. In this section, we will find a canonical Cartan subalgebra for the self-similar $k$-graph C*-algebra $\O_{G,\Lambda}$ by invoking the very useful criterion given by Brown-Nagy-Reznikoff-Sims-Williams in \cite{BNRSW16}. To be more precise, we show that the self-similar cycline subalgebra $\M_{G,\Lambda}$ of $\O_{G,\Lambda}$ is Cartan (Theorem \ref{T:Exp}), which turns out to be the tensor product of the group C*-algebra $\ca(\Per_{G,\Lambda})$ with the diagonal subalgebra $\D_\Lambda$ (Theorem \ref{T:tensor}). These results will play a central role in computing the KMS states of 
$\O_{G,\Lambda}$ later.

\begin{defn}
\label{D:Mcyc}
Let $\Lambda$ be a self-similar $k$-graph over a group $G$  with $\vert \Lambda^0 \vert<\infty$.
The \emph{self-similar cycline subalgebra} $\M_{G,\Lambda}$ is defined as the C*-subalgebra of
    $\O_{G,\Lambda}$ generated by $s_\mu u_g s_\nu^*$ with $(\mu, g, \nu)$  cycline:
\[
\M_{G,\Lambda}:=\ca(\{s_\mu u_g s_\nu^*: (\mu, g, \nu)\in\C_{G,\Lambda}\}).
\]
\end{defn}

\begin{lem}\label{L:Isoint}
%Let $\Lambda$ be a $k$-graph, let $G$ be a group, and let $\varphi:G \to \Aut(\Lambda)$ be a pseudo free self-similar action.
Let $\Lambda$ be a pseudo free self-similar $k$-graph over a group $G$, and
$\mathrm{Iso}(\mathcal{G}_{G,\Lambda})^{\mathrm{o}}$ be the interior of the isotropy of $\mathcal{G}_{G,\Lambda}$. Then
$\mathrm{Iso}(\mathcal{G}_{G,\Lambda})^{\mathrm{o}}=\{(\mu (g \cdot x);\T_{d(\mu)}([g \vert_{x}]),d(\mu)-d(\nu);\nu x):(\mu,g,\nu) \in \C_{G,\Lambda},\, x \in s(\nu)\Lambda^\infty\}$.
\end{lem}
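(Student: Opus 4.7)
The plan is to prove the two inclusions of the claimed equality, exploiting that the basic open sets $Z(\mu,g,\nu)$ form a basis of compact open bisections for $\G_{G,\Lambda}$ (this is the cited theorem from \cite{LY17_2} under the pseudo free hypothesis).

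For the inclusion $\supseteq$, I would take $(\mu,g,\nu)\in\C_{G,\Lambda}$ and $x\in s(\nu)\Lambda^\infty$, and observe that the point $\gamma:=(\mu(g\cdot x);\T_{d(\mu)}([g|_x]),d(\mu)-d(\nu);\nu x)$ lies in $Z(\mu,g,\nu)$. The cycline hypothesis gives $\mu(g\cdot z)=\nu z$ for every $z\in s(\nu)\Lambda^\infty$, so the range and source coincide on every element of $Z(\mu,g,\nu)$; hence $Z(\mu,g,\nu)\subseteq\mathrm{Iso}(\G_{G,\Lambda})$. Since $Z(\mu,g,\nu)$ is open, $\gamma\in\mathrm{Iso}(\G_{G,\Lambda})^{\mathrm{o}}$.

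For the inclusion $\subseteq$, I start with $\gamma\in\mathrm{Iso}(\G_{G,\Lambda})^{\mathrm{o}}$. Using that $\B_{G,\Lambda}$ is a basis, there exists a basic open set $Z(\mu',g',\nu')$ with $\gamma\in Z(\mu',g',\nu')\subseteq\mathrm{Iso}(\G_{G,\Lambda})$. The isotropy containment forces, for every $z\in s(\nu')\Lambda^\infty$, the corresponding element of $Z(\mu',g',\nu')$ to have $\mu'(g'\cdot z)=\nu' z$; this is exactly the cycline condition, so $(\mu',g',\nu')\in\C_{G,\Lambda}$. Since the elements of $Z(\mu',g',\nu')$ are precisely of the form $(\mu'(g'\cdot z);\T_{d(\mu')}([g'|_z]),d(\mu')-d(\nu');\nu' z)$ for $z\in s(\nu')\Lambda^\infty$, there is some $x'\in s(\nu')\Lambda^\infty$ that realizes $\gamma$ in the required form.

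The only delicate point is to guarantee a cycline basic open set $Z(\mu',g',\nu')$ between $\gamma$ and the interior-of-isotropy open set containing it; this is the crux of the $\subseteq$ direction and hinges on $\B_{G,\Lambda}$ being a genuine neighborhood basis. If this cannot be invoked directly, I would instead exhibit an explicit neighborhood base at $\gamma$ by the refined sets $Z(\mu(g\cdot y),g|_y,\nu y)$ with $y=x(0,p)$ for varying $p\in\bN^k$. Checking that $\gamma$ belongs to each such refinement reduces to the identity $\T_{d(\mu)+p}([(g|_y)|_{\sigma^p(x)}])=\T_{d(\mu)}([g|_x])$ in $Q(\bN^k,G)$, which follows from the self-similarity relation $g|_{x(0,p+q)}=g|_{x(0,p)}|_{x(p,p+q)}$: the two representing $G$-valued functions agree for all indices $\geq d(\mu)+p$. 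Once $p$ is large enough that the refinement sits inside $\mathrm{Iso}(\G_{G,\Lambda})$, the triple $(\mu(g\cdot y),g|_y,\nu y)$ is cycline and represents $\gamma$ with $x'=\sigma^p(x)$.
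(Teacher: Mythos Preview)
Your proof is correct and follows essentially the same argument as the paper's. The paper compresses both inclusions into a single chain of equalities,
\[
\mathrm{Iso}(\mathcal{G}_{G,\Lambda})^{\mathrm{o}}
=\bigcup_{Z(\mu,g,\nu) \subseteq \mathrm{Iso}(\mathcal{G}_{G,\Lambda})}Z(\mu,g,\nu)
=\bigcup_{(\mu,g,\nu) \in \C_{G,\Lambda}}Z(\mu,g,\nu),
\]
which is exactly your two inclusions combined: the first equality is your ``$\subseteq$'' (the interior of any set is the union of basic open sets it contains, since $\B_{G,\Lambda}$ is a basis), and the second is your ``$\supseteq$'' together with the observation that $Z(\mu,g,\nu)\subseteq\mathrm{Iso}(\G_{G,\Lambda})$ iff $(\mu,g,\nu)$ is cycline. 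Your fallback argument via the refined neighborhoods $Z(\mu(g\cdot y),g|_y,\nu y)$ is unnecessary here, since the basis property of $\B_{G,\Lambda}$ is already available from \cite{LY17_2} under the pseudo free hypothesis.
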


\begin{proof}
We notice that
\[
\mathrm{Iso}(\mathcal{G}_{G,\Lambda})^{\mathrm{o}}
=\bigcup_{Z(\mu,g,\nu) \subseteq \mathrm{Iso}(\mathcal{G}_{G,\Lambda})}Z(\mu,g,\nu)
=\bigcup_{(\mu,g,\nu) \in \C_{G,\Lambda}}Z(\mu,g,\nu).
\]
So we are done.
\end{proof}

\begin{lem}
\label{L:Isoabelian}
Let $\Lambda$ be a pseudo free and locally faithful self-similar $k$-graph over a group $G$.
Then $\mathrm{Iso}(\mathcal{G}_{G,\Lambda})^{\mathrm{o}}$ is abelian.
\end{lem}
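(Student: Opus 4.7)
My plan is to show that for every $y \in \Lambda^\infty$, the isotropy subgroup of $\mathrm{Iso}(\mathcal{G}_{G,\Lambda})^{\mathrm{o}}$ at $y$ embeds injectively into the abelian group $\Per_{G,\Lambda}$ via the continuous $\bZ^k$-valued cocycle
\[
c\colon \mathcal{G}_{G,\Lambda}\to\bZ^k,\qquad c\bigl((y_1;(f,z);y_2)\bigr)=z.
\]
Since any two composable elements of $\mathrm{Iso}(\mathcal{G}_{G,\Lambda})^{\mathrm{o}}$ necessarily share a common base point, abelianness of every isotropy fiber is exactly what the statement demands.

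By Lemma \ref{L:Isoint}, any $\gamma\in\mathrm{Iso}(\mathcal{G}_{G,\Lambda})^{\mathrm{o}}$ at $y$ has the form $\gamma=(y;(\T_{d(\mu)}([g\vert_x]),d(\mu)-d(\nu));y)$ for some cycline triple $(\mu,g,\nu)\in\C_{G,\Lambda}$ and $x\in s(\nu)\Lambda^\infty$ with $y=\nu x$; in particular $c(\gamma)=d(\mu)-d(\nu)\in\Per_{G,\Lambda}$. The core of the argument is to show that $\gamma$ is uniquely determined by the pair $(y,c(\gamma))$, so that the restriction of $c$ to the isotropy at $y$ becomes an injective group homomorphism into $\Per_{G,\Lambda}$.

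To prove this uniqueness, given two presentations $\gamma_i=(y;(f_i,z);y)$ ($i=1,2$) coming from cycline triples $(\mu_i,g_i,\nu_i)$ with $y=\nu_i x_i$ and the same cocycle value $z$, I would first reduce to the case $\nu_1=\nu_2$ by extending both to a common prefix $y(0,q)$ of $y$ of degree $q\geq d(\nu_1)\vee d(\nu_2)$. The reduction uses the concatenation property of cycline triples --- already implicit in the proof of Proposition \ref{P:unicyc} --- that $(\mu,g,\nu')\in\C_{G,\Lambda}$ together with $\beta\in s(\nu')\Lambda$ yields $(\mu(g\cdot\beta),g\vert_\beta,\nu'\beta)\in\C_{G,\Lambda}$. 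Applying this with $\beta_i:=y(d(\nu_i),q)$ replaces $(\mu_i,g_i,\nu_i)$ by $(\mu_i(g_i\cdot\beta_i),g_i\vert_{\beta_i},y(0,q))$; once this replacement is shown to preserve $\gamma_i$, both $\gamma_1$ and $\gamma_2$ arise from cycline triples with identical source $y(0,q)$ and identical range-degree $q+z$, and Lemma \ref{L:unicyc}(ii) forces the two triples, hence the two $\gamma_i$, to coincide.

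The main obstacle is the invariance of the groupoid element under extension, which amounts to verifying
\[
\bigl[\T_{d(\mu_i)}([g_i\vert_{x_i}])\bigr]=\bigl[\T_{d(\mu_i)+d(\beta_i)}([g_i\vert_{\beta_i}\vert_{\sigma^q(y)}])\bigr]
\]
in $Q(\bN^k,G)$. Setting $w:=\sigma^q(y)$, the identity $x_i=\beta_i w$ together with the restriction-composition rule $g_i\vert_{\beta_i w(0,m)}=g_i\vert_{\beta_i}\vert_{w(0,m)}$ from Definition \ref{D:ss}(iii) unfolds both sides to $p\mapsto g_i\vert_{y(d(\nu_i),p-z)}$ for all sufficiently large $p\in\bN^k$, delivering the required eventual equality.
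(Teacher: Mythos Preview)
Your argument is correct and takes a genuinely different route from the paper's proof. The paper proceeds by a direct commutator computation: given two composable elements of $\mathrm{Iso}(\mathcal{G}_{G,\Lambda})^{\mathrm{o}}$, it expands both products in the semidirect-product coordinate $Q(\bN^k,G)\rtimes_\T\bZ^k$ and reduces commutativity to an identity of tail classes in $Q(\bN^k,G)$; that identity is then established by constructing, for each $p\in\bN^k$, two cycline triples with the same first leg and invoking Lemma~\ref{L:unicyc}(i) to force the restriction data to match. Your approach is more structural: you observe that $\mathrm{Iso}(\mathcal{G}_{G,\Lambda})^{\mathrm{o}}$ is a group bundle and that the $\bZ^k$-valued cocycle $c$ restricts to an injective homomorphism on each fibre, which you prove by normalising two presentations to a common prefix $y(0,q)$ and applying Lemma~\ref{L:unicyc}(ii). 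Both proofs hinge on Lemma~\ref{L:unicyc} (and hence on local faithfulness) as the decisive uniqueness input; pseudo freeness enters only through the description of $\mathrm{Iso}(\mathcal{G}_{G,\Lambda})^{\mathrm{o}}$ in Lemma~\ref{L:Isoint}. Your route avoids the paper's longer tail-class calculation and yields the bonus information that each isotropy group of $\mathrm{Iso}(\mathcal{G}_{G,\Lambda})^{\mathrm{o}}$ embeds in $\bZ^k$ and is therefore free abelian of rank at most $k$. One cosmetic point: you phrase the target of the embedding as $\Per_{G,\Lambda}$, but at this stage of the paper $\Per_{G,\Lambda}$ is only a subset of $\bZ^k$ (its group structure is established later, under strong connectedness, in Theorem~\ref{T:Pergroup}); your argument needs only that the target $\bZ^k$ is abelian, so this is harmless.
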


\begin{proof}
Let  $(\mu,g,\nu)$ and $(\alpha,h,\beta)$ be two cycline triples, $x \in s(\nu)\Lambda^\infty$, and $y \in s(\beta) \Lambda^\infty$.
Notice that $\nu x=\alpha (h \cdot y)$ if and only if $\beta y=\mu (g \cdot x)$.
Suppose that $\nu x=\alpha (h \cdot y)$. So in order to prove that
$(\mu (g \cdot x);\T_{d(\mu)}([g \vert_{x}]),d(\mu)-d(\nu);\nu x)$ and $(\alpha (h \cdot y);\T_{d(\alpha)}([h \vert_{y}]),d(\alpha)-d(\beta);\beta y)$ commute, it is sufficient to show that
\[
\T_{d(\mu)}([g \vert_{x}])\T_{d(\mu)-d(\nu)+d(\alpha)}([h \vert_{y}])
   =\T_{d(\alpha)}([h \vert_{y}])\T_{d(\alpha)-d(\beta)+d(\mu)}([g \vert_{x}]),
\]
which is equivalent to
\begin{align*}
&\T_{d(\mu)+d(\alpha)}([g \vert_{x(0,d(\alpha)}|_{\sigma^{d(\alpha)}(x)}])\T_{d(\mu)-d(\nu)+d(\alpha)}\T_{d(\nu)}([h \vert_{x(0,d(\nu)}|_{\sigma^{d(\nu)}(y)}])\\
&=\T_{d(\alpha)+d(\mu)}([h \vert_{y(0,d(\mu)}|_{\sigma^{d(\mu)}(y)}])\T_{d(\alpha)-d(\beta)+d(\mu)}\T_{d(\beta)}([g \vert_{x(0,d(\beta)}|_{\sigma^{d(\beta)}(x)}])
\end{align*}
by \cite[Lemma 5.1]{LY17_2}. Thus one has to show
\begin{align}
\label{E:Iso}
[g \vert_{x(0,d(\alpha))} \vert_{\sigma^{d(\alpha)}(x)}  h \vert_{y(0,d(\nu))} \vert_{\sigma^{d(\nu)}(y)}]
   =[h \vert_{y(0,d(\mu))} \vert_{\sigma^{d(\mu)}(y)} g \vert_{x(0,d(\beta))} \vert_{\sigma^{d(\beta)}(x)}].
\end{align}
To do so, let $p \in \mathbb{N}^k$. For any $z \in y(d(\nu)+p,d(\nu)+p)\Lambda^\infty$, we compute that
\begin{align*}
&\mu (g \cdot x(0,d(\alpha)+p)) ((g \vert_{x(0,d(\alpha)+p)}h \vert_{y(0,d(\nu)+p)}) \cdot z)
\\&=\mu (g \cdot (x(0,d(\alpha)+p)(h \vert_{y(0,d(\nu)+p)} \cdot z)))
\\&=\nu x(0,d(\alpha)+p)(h \vert_{y(0,d(\nu)+p)} \cdot z)
\\&=\alpha (h \cdot y(0,d(\nu)+p)) (h \vert_{y(0,d(\nu)+p)} \cdot z)
\\&=\alpha (h \cdot (y(0,d(\nu)+p)z))
\\&=\beta y(0,d(\nu)+p)z.
\end{align*}
So $(\mu (g \cdot x(0,d(\alpha)+p)),g \vert_{x(0,d(\alpha)+p)}h \vert_{y(0,d(\nu)+p)} ,\beta y(0,d(\nu)+p))$ is a cycline triple. On the other hand, for any $z \in x(d(\beta)+p,d(\beta)+p) \Lambda^\infty$, we calculate that
\begin{align*}
&\alpha (h \cdot y(0,d(\mu)+p)) ((h \vert_{y(0,d(\mu)+p)} g \vert_{x(0,d(\beta)+p)})\cdot z)
\\&=\alpha (h \cdot (y(0,d(\mu)+p)(g \vert_{x(0,d(\beta)+p)} \cdot z)))
\\&=\beta y(0,d(\mu)+p)(g \vert_{x(0,d(\beta)+p)} \cdot z)
\\&=\mu (g \cdot x(0,d(\beta)+p)) (g \vert_{x(0,d(\beta)+p)} \cdot z)
\\&=\mu (g \cdot (x(0,d(\beta)+p) z))
\\&=\nu x(0,d(\beta)+p) z.
\end{align*}
So $(\alpha (h \cdot y(0,d(\mu)+p)),h \vert_{y(0,d(\mu)+p)}g \vert_{x(0,d(\beta)+p)} ,\nu x(0,d(\beta)+p) )$ is a cycline triple. Since
$\mu g\cdot x=\nu x=\alpha(h\cdot y)$, one has
$
\mu (g \cdot x(0,d(\alpha)+p))=\alpha (h \cdot y(0,d(\mu)+p)).
$
It follows from Lemma~\ref{L:unicyc} that \eqref{E:Iso} holds true.
Hence $\mathrm{Iso}(\mathcal{G}_{G,\Lambda})^{\mathrm{o}}$ is abelian.
\end{proof}

%%%%%%%%%

The following theorem is a generalization of \cite[Theorem~4.4]{BLY17}.

\begin{prop}\label{P:Cartan}
Let $\Lambda$ be a pseudo free self-similar $k$-graph over a group $G$. Then $\mathrm{Iso}(\mathcal{G}_{G,\Lambda})^{\mathrm{o}}$ is closed
if and only if there exist no $\mu,\nu \in \Lambda,g \in G,x \in s(\nu) \Lambda^\infty$ with $s(\mu)=g \cdot s(\nu)$ satisfying that
\begin{enumerate}
\item[(1)] %\label{mu (g x(0,p)),g |_x(0,p),nu x(0,p) not cyc tri}
for $p \in \mathbb{N}^k,(\mu (g \cdot x(0,p)),g\vert_{x(0,p)},\nu x(0,p))$ is not a cycline triple;
\item[(2)] %\label{mu (g x(0,p)lambda),g |_x(0,p)lambda,nu x(0,p)lambda cyc tri}
for $p \in \mathbb{N}^k$, there exists $\lambda \in x(p,p)\Lambda$ such that $(\mu (g \cdot (x(0,p)\lambda))$, $g\vert_{x(0,p)\lambda},\nu x(0,p)\lambda)$ is a cycline triple.
\end{enumerate}
\end{prop}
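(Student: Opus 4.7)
The plan is to argue both directions by contrapositive, analyzing when a typical groupoid element $\gamma=(\mu(g\cdot x),\T_{d(\mu)}([g\vert_x]),d(\mu)-d(\nu),\nu x)$ lies in $\mathrm{Iso}(\mathcal{G}_{G,\Lambda})^{\mathrm{o}}$ or in its closure. By Lemma~\ref{L:Isoint}, $\mathrm{Iso}(\mathcal{G}_{G,\Lambda})^{\mathrm{o}}=\bigcup_{(\alpha,h,\beta)\in\C_{G,\Lambda}}Z(\alpha,h,\beta)$, and the canonical basic open sets $Z(\mu(g\cdot x(0,p)),g\vert_{x(0,p)},\nu x(0,p))$ for $p\in\mathbb{N}^k$ all contain $\gamma$ and, after refinement, form a neighborhood base at $\gamma$. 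The strategy is thus to reduce the topological condition on $\mathrm{Iso}(\mathcal{G}_{G,\Lambda})^{\mathrm{o}}$ to the combinatorial question of which canonical triples are cycline.

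For $(1),(2)\Rightarrow\mathrm{Iso}(\mathcal{G}_{G,\Lambda})^{\mathrm{o}}$ not closed, I use (2) to pick, for each $p$, some $\lambda_p\in x(p,p)\Lambda$ making $(\mu(g\cdot(x(0,p)\lambda_p)),g\vert_{x(0,p)\lambda_p},\nu x(0,p)\lambda_p)$ cycline; choosing any $y_p\in s(\lambda_p)\Lambda^\infty$ produces an element $\delta_p$ of the corresponding cylinder, hence of $\mathrm{Iso}(\mathcal{G}_{G,\Lambda})^{\mathrm{o}}$. A direct verification, using the cocycle identity $g\vert_{x(0,r)}=g\vert_{x(0,p)}\vert_{x(p,r)}$ to match the $Q(\mathbb{N}^k,G)$-coordinate, shows that $\delta_p$ eventually lies in every $Z(\mu(g\cdot x(0,q)),g\vert_{x(0,q)},\nu x(0,q))$, so $\delta_p\to\gamma$. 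To rule out $\gamma\in\mathrm{Iso}(\mathcal{G}_{G,\Lambda})^{\mathrm{o}}$ under (1), suppose $Z(\alpha',h',\beta')\subseteq\mathrm{Iso}$ is a basic neighborhood of $\gamma$, so $(\alpha',h',\beta')$ is cycline. Extending this triple along a suitable finite-path prefix of $\sigma^{d(\beta')}(\nu x)$ (which preserves cyclineness and keeps $\gamma$ in the resulting cylinder) reduces to the case $\beta'=\nu x(0,p)$ and $\alpha'=\mu(g\cdot x(0,p))$ for some $p\in\mathbb{N}^k$. Matching $\gamma$'s group coordinate on both sides and combining with the target-endpoint equality forced by cyclineness show that $k:=g\vert_{x(0,p)}^{-1}h'$ satisfies $k\cdot x(p,p+r)=x(p,p+r)$ and $k\vert_{x(p,p+r)}=1_G$ for all sufficiently large $r$; pseudo-freeness then yields $h'=g\vert_{x(0,p)}$, so $(\mu(g\cdot x(0,p)),g\vert_{x(0,p)},\nu x(0,p))$ is cycline, contradicting (1).

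Conversely, if $\mathrm{Iso}(\mathcal{G}_{G,\Lambda})^{\mathrm{o}}$ is not closed, pick $\gamma\in\overline{\mathrm{Iso}(\mathcal{G}_{G,\Lambda})^{\mathrm{o}}}\setminus\mathrm{Iso}(\mathcal{G}_{G,\Lambda})^{\mathrm{o}}$ and write it as $\gamma=(\mu(g\cdot x),\ldots,\nu x)$. Condition (1) is immediate, since each canonical cylinder $Z(\mu(g\cdot x(0,p)),g\vert_{x(0,p)},\nu x(0,p))$ is a neighborhood of $\gamma$ and hence cannot be contained in $\mathrm{Iso}$. For (2), fix $p$; since this canonical cylinder meets $\mathrm{Iso}(\mathcal{G}_{G,\Lambda})^{\mathrm{o}}$, a basic refinement around an intersection point gives $Z(\alpha',h',\beta')\subseteq Z(\mu(g\cdot x(0,p)),g\vert_{x(0,p)},\nu x(0,p))\cap\mathrm{Iso}$ with $(\alpha',h',\beta')$ cycline. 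Comparing source endpoints in the ambient canonical cylinder forces $\beta'=\nu x(0,p)\lambda$ for some $\lambda\in x(p,p)\Lambda$; combining the cycline equality $\alpha'(h'\cdot z)=\beta'z$ with the description of target endpoints of elements of the ambient cylinder yields $\mu(g\cdot(x(0,p)\lambda))(g\vert_{x(0,p)\lambda}\cdot z)=\nu x(0,p)\lambda z$ for every $z\in s(\lambda)\Lambda^\infty$, which is (2). The main obstacle is the pseudo-freeness step in the forward direction: a general basic neighborhood of $\gamma$ has extra freedom in its group coordinate $h'$, and pinning $h'$ down to $g\vert_{x(0,p)}$ requires carefully combining the cocycle identities for the restriction map with pseudo-freeness applied to finite prefixes of $\sigma^p(x)$.
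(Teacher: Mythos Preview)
Your proof is correct and follows essentially the same strategy as the paper's: both directions proceed by contrapositive, translating membership in $\mathrm{Iso}(\mathcal{G}_{G,\Lambda})^{\mathrm{o}}$ and its closure into the combinatorics of cycline triples via Lemma~\ref{L:Isoint} and the basic cylinder neighborhoods $Z(\mu(g\cdot x(0,p)),g\vert_{x(0,p)},\nu x(0,p))$ of $\gamma$. Your write-up is considerably more explicit than the paper's---in particular, you spell out the pseudo-freeness argument needed to show that any cycline basic neighborhood of $\gamma$ must, after extension, have group coordinate $g\vert_{x(0,p)}$, a step the paper leaves implicit in its appeal to Lemma~\ref{L:Isoint}---but the underlying approach is the same.
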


\begin{proof}
Firstly, suppose that $\mathrm{Iso}(\mathcal{G}_{G,\Lambda})^{\mathrm{o}}$ is closed. Suppose that there exist $\mu,\nu \in \Lambda,g \in G,x \in s(\nu) \Lambda^\infty$ with $s(\mu)=g \cdot s(\nu)$ satisfying Conditions~(1)-(2). By Lemma \ref{L:Isoint}, Condition~(1) implies that $(\mu (g \cdot x);\T_{d(\mu)}([g \vert_{x}]),d(\mu)$ $-d(\nu);\nu x) \notin \mathrm{Iso}(\mathcal{G}_{G,\Lambda})^{\mathrm{o}}$, and
Condition~(2) yields that $(\mu (g \cdot x);\T_{d(\mu)}([g \vert_{x}]),d$ $(\mu)-d(\nu);\nu x) \in \overline{\mathrm{Iso}(\mathcal{G}_{G,\Lambda})^{\mathrm{o}}}$. So $\mathrm{Iso}(\mathcal{G}_{G,\Lambda})^{\mathrm{o}}$ is not closed, which is a contradiction. Hence there exist no $\mu,\nu \in \Lambda,g \in G,x \in s(\nu) \Lambda^\infty$ with $s(\mu)=g \cdot s(\nu)$ satisfying Conditions~(1)-(2).

We now prove the converse statement. Fix $\mu,\nu \in \Lambda,g \in G,x_n,x \in s(\nu) \Lambda^\infty$ with $s(\mu)=g \cdot s(\nu)$ and $x_n \to x$ as $n\to \infty$, such that
 $(\mu (g \cdot x_n);\T_{d(\mu)}([g \vert_{x_n}]),d(\mu)-d(\nu);\nu x_n) \in \mathrm{Iso}(\mathcal{G}_{G,\Lambda})^{\mathrm{o}}$ for all $n \geq 1$. To the contrary, assume that
 $(\mu (g \cdot x);\T_{d(\mu)}([g \vert_{x}]),d(\mu)$ $-d(\nu);\nu x) \notin \mathrm{Iso}(\mathcal{G}_{G,\Lambda})^{\mathrm{o}}$. By Lemma~\ref{L:Isoint}, we deduce that $\mu,\nu,g,x$ satisfy Conditions~(1)-(2), which is a contradiction. So $(\mu (g \cdot x);\T_{d(\mu)}([g \vert_{x}]),d(\mu)$ $-d(\nu);\nu x) \in \mathrm{Iso}(\mathcal{G}_{G,\Lambda})^{\mathrm{o}}$. Hence $\mathrm{Iso}(\mathcal{G}_{G,\Lambda})^{\mathrm{o}}$ is closed.
\end{proof}

\begin{prop}
\label{P:CycCartan}
Let $(G,\Lambda)$ be a pseudo free and locally faithful self-similar action with $\Lambda$ strongly connected.
Then $\mathrm{Iso}(\mathcal{G}_{G,\Lambda})^{\mathrm{o}}$ is closed.
\end{prop}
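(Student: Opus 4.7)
The plan is to verify the criterion provided by Proposition~\ref{P:Cartan}: assume for contradiction that there exist $\mu,\nu\in\Lambda$, $g\in G$, and $x\in s(\nu)\Lambda^\infty$ with $s(\mu)=g\cdot s(\nu)$ satisfying Conditions~(1) and (2) of that proposition. From Condition~(2), for some (equivalently, every) $p\in\mathbb{N}^k$ there is $\lambda\in x(p,p)\Lambda$ such that $(\mu(g\cdot x(0,p)\lambda),\, g\vert_{x(0,p)\lambda},\, \nu x(0,p)\lambda)$ is cycline. In particular $d(\mu)-d(\nu)\in\Per_{G,\Lambda}$, so by Proposition~\ref{P:unicyc} the word $\mu(g\cdot x(0,p))\in\Lambda^{d(\mu)+p}$ belongs to a \emph{unique} cycline triple $(\mu(g\cdot x(0,p)),h,\tau)$ with $d(\tau)=d(\nu)+p$. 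The whole argument boils down to proving $h=g\vert_{x(0,p)}$ and $\tau=\nu x(0,p)$, which directly contradicts Condition~(1).

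To prove this identification I will use two standard manipulations of cycline triples. First, I will record the straightforward fact that if $(\alpha,k,\beta)$ is cycline, then for every $\eta\in s(\beta)\Lambda$ the extended triple $(\alpha(k\cdot\eta),\,k\vert_\eta,\,\beta\eta)$ is also cycline (this follows immediately from the definition by plugging $\eta y$ into the cycline identity). Apply this extension to the unique triple $(\mu(g\cdot x(0,p)),h,\tau)$ with the choice $\eta:=h^{-1}\cdot(g\vert_{x(0,p)}\cdot\lambda)$, which is a legitimate element of $s(\tau)\Lambda$ since $s(\tau)=h^{-1}\cdot s(\mu(g\cdot x(0,p)))=h^{-1}\cdot(g\vert_{x(0,p)}\cdot x(p,p))$. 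This choice of $\eta$ was engineered so that $h\cdot\eta=g\vert_{x(0,p)}\cdot\lambda$, and hence the extended first coordinate equals $\mu(g\cdot(x(0,p)\lambda))$, matching the first coordinate of the triple supplied by Condition~(2).

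Now the uniqueness part of Lemma~\ref{L:unicyc}(i) applied to the two cycline triples
\[
(\mu(g\cdot(x(0,p)\lambda)),\, h\vert_\eta,\, \tau\eta) \qand (\mu(g\cdot(x(0,p)\lambda)),\, g\vert_{x(0,p)\lambda},\, \nu x(0,p)\lambda),
\]
which share the first coordinate and have third coordinates of the same degree, forces $\tau\eta=\nu x(0,p)\lambda$ and $h\vert_\eta=g\vert_{x(0,p)\lambda}$. Comparing degrees, $\tau=\nu x(0,p)$ and $\eta=\lambda$. Substituting $\eta=\lambda$ into the definition of $\eta$ yields $(h^{-1}g\vert_{x(0,p)})\cdot\lambda=\lambda$, and I will then use the cocycle identity of Definition~\ref{D:ss}(v) together with $h\vert_\lambda=g\vert_{x(0,p)}\vert_\lambda$ to compute $(h^{-1}g\vert_{x(0,p)})\vert_\lambda=1_G$.

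At this point the pseudo freeness hypothesis kicks in: setting $k:=h^{-1}g\vert_{x(0,p)}$, we have $k\cdot\lambda=\lambda$ and $k\vert_\lambda=1_G$, forcing $k=1_G$, i.e.\ $h=g\vert_{x(0,p)}$. Combined with $\tau=\nu x(0,p)$, this shows that $(\mu(g\cdot x(0,p)),\, g\vert_{x(0,p)},\, \nu x(0,p))$ is cycline, contradicting Condition~(1) of Proposition~\ref{P:Cartan}. The main obstacle is really just the bookkeeping in the last paragraph: one must pick $\eta$ precisely so that a uniqueness statement at degree $d(\mu)+p+d(\lambda)$ transports back to a uniqueness statement at degree $d(\mu)+p$, and then translate the resulting equation $(h^{-1}g\vert_{x(0,p)})\cdot\lambda=\lambda$ into the pseudo freeness input via the restriction cocycle; nothing genuinely new is required beyond Proposition~\ref{P:unicyc}, Lemma~\ref{L:unicyc}, and the axioms of a self-similar action.
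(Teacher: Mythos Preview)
Your proof is correct and follows essentially the same route as the paper's: both invoke Proposition~\ref{P:Cartan} for contradiction, use Proposition~\ref{P:unicyc} to produce the unique cycline triple $(\mu(g\cdot x(0,p)),h,\tau)$, extend it by $\eta=h^{-1}\cdot(g\vert_{x(0,p)}\cdot\lambda)$ so that Lemma~\ref{L:unicyc} forces $\tau=\nu x(0,p)$ and $\eta=\lambda$, and then apply pseudo freeness to $h^{-1}g\vert_{x(0,p)}$. The only cosmetic difference is that the paper writes the extension path directly as $h^{-1}g\vert_{x(0,p)}\cdot\lambda$ and reads off $h\cdot\lambda=g\vert_{x(0,p)}\cdot\lambda$ from $\omega h^{-1}g\vert_{x(0,p)}\cdot\lambda=\nu x(0,p)\lambda$, whereas you derive $\eta=\lambda$ first and then substitute back; the algebra is identical.
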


\begin{proof}
To the contrary, assume that $\Iso(\G_{G,\Lambda})^{\mathrm{o}}$ is not closed.
Then there exist $\mu,\nu \in \Lambda,g \in G,x \in s(\nu) \Lambda^\infty$ with $s(\mu)=g \cdot s(\nu)$ satisfying Conditions~(1)-(2)
of Proposition~\ref{P:Cartan}.
Then there is $p\in \bN^k$ such that $(\mu (g \cdot x(0,p)),g\vert_{x(0,p)},\nu x(0,p))\not\in\C_{G,\Lambda}$,
but there exists $\lambda \in x(p,p)\Lambda$ such that $(\mu (g \cdot (x(0,p)\lambda)), g\vert_{x(0,p)\lambda},\nu x(0,p)\lambda)\in \C_{G,\Lambda}$.
Clearly, $d(\mu (g \cdot (x(0,p)\lambda)))-d(\nu x(0,p)\lambda)=d(\mu (g \cdot x(0,p)))-d(\nu x(0,p))$. By Lemma \ref{L:equi} and Proposition \ref{P:unicyc},
there are unique $h\in G$ and $\omega\in \Lambda^{d(\nu)+p}$ such that $(\mu (g \cdot x(0,p)),h,\omega)\in \C_{G,\Lambda}$.
One can check that
$(\mu g\cdot (x(0,p)\lambda), {h|_{h^{-1}g|_{x(0,p)}\cdot \lambda}}, \omega h^{-1}g|_{x(0,p)}\cdot \lambda)\in\C_{G,\Lambda}$.
It then follows from Lemma \ref{L:unicyc} that
\[
\omega h^{-1}g|_{x(0,p)}\cdot \lambda=\nu x(0,p)\lambda \quad\text{and}\quad h|_{h^{-1}g|_{x(0,p)\lambda}}=g|_{x(0,p)\lambda}.
\]
Thus $\omega=\nu x(0,p)$ and $g|_{x(0,p)}\cdot \lambda=h\cdot\lambda$. Then we have $h|_\lambda=g|_{x(0,p)}|_{\lambda}$. The
pseudo-freeness gives $h=g|_{x(0,p)}$, and so $(\mu (g \cdot x(0,p)),g\vert_{x(0,p)},\nu x(0,p))\in \C_{G,\Lambda}$. This
is a contradiction. By Proposition~\ref{P:Cartan}, $\mathrm{Iso}(\mathcal{G}_{G,\Lambda})^{\mathrm{o}}$ is closed.
\end{proof}

\begin{thm}
\label{T:Exp}
Suppose that $\Lambda$ is a strongly connected $k$-graph with  $\vert \Lambda^0 \vert<\infty$, and that
$G$ is amenable. Let  $(G,\Lambda)$ be a pseudo free and locally faithful self-similar action. Then
\begin{enumerate}
\item $\mathcal{M}_{G,\Lambda}$ is a Cartan subalgebra of $\mathcal{O}_{G,\Lambda}$;

\item there exists a faithful expectation $E:\mathcal{O}_{G,\Lambda} \to \mathcal{M}_{G,\Lambda}$ such that $E(s_\mu u_g s_\nu^*)=0$ if $(\mu,g,\nu)\not\in \C_{G,\Lambda}$.
\end{enumerate}
\end{thm}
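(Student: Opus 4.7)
The plan is to invoke the Brown--Nagy--Reznikoff--Sims--Williams theorem \cite{BNRSW16}: for an amenable, second countable, Hausdorff, étale groupoid $\mathcal{G}$ whose open isotropy $\Iso(\mathcal{G})^{\mathrm{o}}$ is closed and abelian, the subalgebra $\ca(\Iso(\mathcal{G})^{\mathrm{o}})$ is Cartan in $\ca(\mathcal{G})$, with a faithful conditional expectation given on $C_c(\mathcal{G})$ by restriction to $\Iso(\mathcal{G})^{\mathrm{o}}$.

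First I would verify that the path groupoid $\G_{G,\Lambda}$ fits this framework. By \cite[Theorems~5.7 and 5.9]{LY17_2}, $\G_{G,\Lambda}$ is ample (hence étale) and $\O_{G,\Lambda}\cong \ca(\G_{G,\Lambda})$ because $G$ is amenable; Hausdorffness and second countability are built into its construction, and the amenability of $\G_{G,\Lambda}$ follows from that of $G$. The two remaining hypotheses on the open isotropy are exactly the content of Lemma~\ref{L:Isoabelian} (abelianness) and Proposition~\ref{P:CycCartan} (closedness), both of which used pseudo-freeness and local faithfulness in an essential way. Hence BNRSW yields a Cartan subalgebra $\ca(\Iso(\G_{G,\Lambda})^{\mathrm{o}})\subseteq \O_{G,\Lambda}$ together with a faithful expectation $E$.

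It remains to identify $\ca(\Iso(\G_{G,\Lambda})^{\mathrm{o}})$ with $\M_{G,\Lambda}$ and to verify the vanishing property in (ii). Lemma~\ref{L:Isoint} describes $\Iso(\G_{G,\Lambda})^{\mathrm{o}}$ as the union of the compact open bisections $Z(\mu,g,\nu)$ indexed by cycline triples $(\mu,g,\nu)\in\C_{G,\Lambda}$, and under the isomorphism $\O_{G,\Lambda}\cong \ca(\G_{G,\Lambda})$ the generator $s_\mu u_g s_\nu^*$ corresponds to the indicator $1_{Z(\mu,g,\nu)}$; so the two subalgebras coincide, proving (i). For (ii), the open isotropy is clopen (open by definition, closed by Proposition~\ref{P:CycCartan}), so $E$ sends $1_{Z(\mu,g,\nu)}$ to $1_{Z(\mu,g,\nu)\cap \Iso(\G_{G,\Lambda})^{\mathrm{o}}}$. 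Any non-empty sub-bisection of $Z(\mu,g,\nu)$ lying inside $\Iso(\G_{G,\Lambda})^{\mathrm{o}}$ would, by Lemma~\ref{L:Isoint} combined with the uniqueness argument of Lemma~\ref{L:unicyc} and pseudo-freeness, force $(\mu,g,\nu)$ to be cycline itself; so when $(\mu,g,\nu)\notin \C_{G,\Lambda}$ the intersection is empty, giving $E(s_\mu u_g s_\nu^*)=0$.

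The principal obstacle has already been absorbed into the preceding lemmas: establishing that $\Iso(\G_{G,\Lambda})^{\mathrm{o}}$ is both closed and abelian is the genuinely delicate step, requiring the interplay of pseudo-freeness and local faithfulness (especially through the uniqueness in Lemma~\ref{L:unicyc}). Given these facts, Theorem~\ref{T:Exp} reduces to a direct citation of \cite{BNRSW16} together with a routine match between the groupoid data and the generators of $\M_{G,\Lambda}$.
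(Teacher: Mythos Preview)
Your approach is essentially the paper's: invoke \cite{BNRSW16} after feeding in Lemma~\ref{L:Isoabelian} and Proposition~\ref{P:CycCartan}, then identify $\ca(\Iso(\G_{G,\Lambda})^{\mathrm{o}})$ with $\M_{G,\Lambda}$ via Lemma~\ref{L:Isoint} and the correspondence $s_\mu u_g s_\nu^*\leftrightarrow 1_{Z(\mu,g,\nu)}$. One small gap in your sketch of~(ii): to deduce that a nonempty intersection $Z(\mu,g,\nu)\cap\Iso(\G_{G,\Lambda})^{\mathrm{o}}$ forces $(\mu,g,\nu)\in\C_{G,\Lambda}$, Lemma~\ref{L:unicyc} alone is not enough---you first need a second cycline triple to compare with, and that comes from Proposition~\ref{P:unicyc} (the nonempty intersection yields $(\mu(g\cdot\lambda),g|_\lambda,\nu\lambda)\in\C_{G,\Lambda}$ for some $\lambda$, hence $d(\mu)-d(\nu)\in\Per_{G,\Lambda}$, so Proposition~\ref{P:unicyc} produces $(\omega,h,\nu)\in\C_{G,\Lambda}$ with $d(\omega)=d(\mu)$; \emph{then} Lemma~\ref{L:unicyc} and pseudo-freeness give $\mu=\omega$, $g=h$).
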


\begin{proof}
(i) By Lemma \ref{L:Isoabelian}, Proposition \ref{P:CycCartan}, \cite[Corollary~4.5]{BNRSW16}, $\ca(\mathrm{Iso}(\mathcal{G}_{G,\Lambda})^{\mathrm{o}})$ is a Cartan subalgebra of $\ca(\G_{G,\Lambda})$. Then combining this with \cite[Theorem~5.9]{LY17_2} proves (i).

(ii) By (i) above, there exists a faithful expectation $E:\ca(\mathcal{G}_{G,\Lambda}) \to \ca(\mathrm{Iso}(\mathcal{G}_{G,\Lambda})^{\mathrm{o}})$ such that $E(f)=f \vert_{\mathrm{Iso}(\mathcal{G}_{G,\Lambda})^{\mathrm{o}}}$ for all $f \in C_c(\mathcal{G}_{G,\Lambda})$. Let
$\mu,\nu\in\Lambda$ and $g \in G$ with $s(\mu)=g \cdot s(\nu)$, but
$(\mu,g,\nu)\not\in\C_{G,\Lambda}$. We claim that $Z(\mu,g,\nu) \cap \mathrm{Iso}(\mathcal{G}_{G,\Lambda})^{\mathrm{o}}=\mt$.
To the contrary, assume that $Z(\mu,g,\nu) \cap \mathrm{Iso}(\mathcal{G}_{G,\Lambda})^{\mathrm{o}}\neq\mt$. By Lemma~\ref{L:Isoint} and the pseudo freeness, there exists $\lambda \in s(\nu)\Lambda$ such that $(\mu(g \cdot \lambda),g\vert_\lambda,\nu\lambda)\in \C_{G,\Lambda}$. By  Theorem~\ref{T:Pergroup}, one has
$d(\mu)-d(\nu) \in \Per_{G,\Lambda}$. Then by Proposition~\ref{P:unicyc}, there exist $\omega \in \Lambda^{d(\mu)}$ and $h \in G$ such that
$(\omega,h,\nu)\in\C_{G,\Lambda}$. So $(\omega (h \cdot \lambda),h \vert_\lambda,\nu\lambda)\in\C_{G,\Lambda}$. By Lemma~\ref{L:unicyc},
we have that $\mu g\cdot \lambda=\omega h\cdot \lambda$, implying $\mu=\omega$ and $g\cdot \lambda= h\cdot \lambda$,  and that $g|_\lambda=h|_\lambda$.
The pseudo freeness further yields $g=h$. Thus, $(\mu,g,\nu)\in\C_{G,\Lambda}$, a contradiction. Therefore,
$Z(\mu,g,\nu) \cap \mathrm{Iso}(\mathcal{G}_{G,\Lambda})^{\mathrm{o}}=\mt$. It follows from Lemma \ref{L:Isoint} and \cite[Theorem 5.9]{LY17_2} that $E(s_\mu u_g s_\nu^*)=0$ if $(\mu,g,\nu)\not\in\C_{G,\Lambda}$.
\end{proof}

\begin{thm}
\label{T:tensor}
Suppose that $\Lambda$ is a strongly connected $k$-graph with  $\vert \Lambda^0 \vert<\infty$, and that
$G$ is amenable. Let  $(G,\Lambda)$ be a pseudo free and locally faithful self-similar action.
 Then $\mathcal{M}_{G,\Lambda}\cong\ca(\Per_{G,\Lambda}) \otimes \D_\Lambda$.
\end{thm}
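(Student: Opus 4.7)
The plan is to construct a $*$-isomorphism $\Psi: \ca(\Per_{G,\Lambda}) \otimes \D_\Lambda \to \M_{G,\Lambda}$ sending $v_z \otimes d \mapsto V_z \, d$, where $V_z := V_{m,n}$ for any $m,n \in \bN^k$ with $m-n = z \in \Per_{G,\Lambda}$ (well-defined by Theorem~\ref{T:Vrep}). I will verify in turn that $\Psi$ is well-defined, surjective onto $\M_{G,\Lambda}$, and injective via a standard intertwining argument.

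For construction and surjectivity: since Theorem~\ref{T:Vrep} shows each $V_{m,n}$ is central in $\O_{G,\Lambda}$, the subalgebra $\fA := \ca(\{V_{m,n}\})$ commutes elementwise with $\D_\Lambda$, so the multiplication map $(a,d) \mapsto a d$ extends to a $*$-homomorphism from the algebraic tensor product $\fA \odot \D_\Lambda$ into $\O_{G,\Lambda}$. Because $\D_\Lambda$ is abelian (hence nuclear), this extends unambiguously to the C*-tensor product, and composing with the isomorphism $\ca(\Per_{G,\Lambda}) \cong \fA$ from Theorem~\ref{T:embed} yields $\Psi$. Surjectivity onto $\M_{G,\Lambda}$ is immediate from identity~\eqref{E:Vmn}: for each cycline triple $(\mu,g,\nu)$ we have
\[
s_\mu u_g s_\nu^* \;=\; s_\mu s_\mu^* \, V_{d(\mu),d(\nu)} \;=\; \Psi\bigl(v_{d(\mu)-d(\nu)} \otimes s_\mu s_\mu^*\bigr),
\]
and such elements span a dense $*$-subalgebra of $\M_{G,\Lambda}$.

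For injectivity, I plan to apply \cite[Proposition~3.11]{Kat03}. The canonical trace $\tau$ on $\ca(\Per_{G,\Lambda})$, given by $\tau(v_z) = \delta_{z,0}$, is faithful and produces a faithful expectation $F := \tau \otimes \id : \ca(\Per_{G,\Lambda}) \otimes \D_\Lambda \to \D_\Lambda$. On the other side, the gauge action $\gamma: \bT^k \to \Aut(\O_{G,\Lambda})$ satisfies $\gamma_w(s_\mu u_g s_\nu^*) = w^{d(\mu)-d(\nu)} s_\mu u_g s_\nu^*$, so it preserves $\M_{G,\Lambda}$, and integration gives an expectation $E': \M_{G,\Lambda} \to \M_{G,\Lambda}^\gamma$. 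On a cycline triple generator, $E'(s_\mu u_g s_\nu^*)$ vanishes unless $d(\mu)=d(\nu)$; when $d(\mu)=d(\nu)$, the cycline equation $\mu(g\cdot x) = \nu x$ forces $\mu=\nu$ and $g\cdot x = x$ for every $x \in s(\nu)\Lambda^\infty$, and local faithfulness then gives $g=1_G$, so the image lies in $\D_\Lambda$. The intertwining $E' \circ \Psi = \Psi \circ F$ holds on generators (both sides equal $\delta_{z,0}\, s_\mu s_\mu^*$) and hence everywhere, and since $\Psi$ is the identity on $1 \otimes \D_\Lambda$, Katsura's criterion yields injectivity. The main subtlety will be verifying that the image of $E'$ is exactly $\D_\Lambda$ and not some strictly larger fixed-point subalgebra of $\M_{G,\Lambda}$, which is precisely where local faithfulness and the uniqueness in Lemma~\ref{L:unicyc} are essential.
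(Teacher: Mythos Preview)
Your proof is correct, but it follows a genuinely different route from the paper's own argument. The paper first proves a slightly stronger statement, namely that the map $f:\ca(\Per_{G,\Lambda})\otimes\F\to\B$ (where $\F$ is the AF core $\ca(s_\mu s_\nu^*:d(\mu)=d(\nu))$ and $\B$ is generated by $\F$ together with the $V_{p,q}$) is an isomorphism, and then restricts to $\D_\Lambda\subseteq\F$. Injectivity of $f$ is obtained by writing $\F$ as a direct limit of finite-dimensional blocks $\bigoplus_v\F_p(v)$ and checking, following \cite[Lemma~8.1]{DY09}, that $f$ restricts to an isomorphism on each $\ca(\Per_{G,\Lambda})\otimes\F_p(v)$. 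Your argument instead works directly with $\D_\Lambda$ and proves injectivity via the gauge expectation and Katsura's intertwining criterion, exactly paralleling the paper's proof of Theorem~\ref{T:embed}. This is cleaner for the statement at hand: it avoids the passage through the larger core $\F$ and the appeal to \cite{DY09}, and makes transparent where local faithfulness is used (forcing $g=1_G$ in any degree-zero cycline triple, so that $\M_{G,\Lambda}^\gamma=\D_\Lambda$). The paper's approach, on the other hand, yields the bonus that $\ca(\Per_{G,\Lambda})\otimes\F\cong\B$, which your method does not immediately give since the gauge-fixed-point algebra of $\B$ is $\F$ rather than an algebra on which $\Psi$ is obviously injective.
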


\begin{proof}
Let $\mathcal{F}:=\ca(s_\mu s_\nu^*:d(\mu)=d(\nu))$ and
$\mathcal{B}:=\ca(\{V_{p,q},s_\mu s_\nu^*:p,q\in\bN^k \text{ with } p-q \in \Per_{G,\Lambda}, \mu,\nu\in\Lambda\text{ with }d(\mu)=d(\nu)\})$. Then $\mathcal{F} \subseteq\mathcal{B}$. By Theorem~\ref{T:Vrep},
\[
\ca(\Per_{G,\Lambda})\cong \ca(\{V_{p,q}:p,q\in\bN^k \text{ with } p-q \in \Per_{G,\Lambda}\})\hookrightarrow\B.
\]
So there exists a surjective homomorphism $f:\ca(\Per_{G,\Lambda}) \otimes \F \to \mathcal{B}$ by \cite[Proposition 3.3.7]{BO08}. We claim that $f$ is also injective.
To prove this, for $v \in \Lambda^0$ and $p \in \mathbb{N}^k$, let $\mathcal{F}_p(v):=\ca(s_\mu s_\nu^*:\mu,\nu \in \Lambda^p v)$ and
 $Q:=\sum_{\mu \in \Lambda^p v}s_\mu s_\mu^*$. By \cite[Lemma~3.2]{KP00}, $\mathcal{F} \cong \lim_{p \in \mathbb{N}^k}(\oplus_{v \in \Lambda^0}\mathcal{F}_{p}(v))$.
 A similar argument used in the proof of \cite[Lemma~8.1]{DY09} yields that the restriction $f|_{\ca(\Per_{G,\Lambda}) \otimes \mathcal{F}_p(v)}$ is an isomorphism
 from $\ca(\Per_{G,\Lambda}) \otimes \mathcal{F}_p(v)$ onto $Q \ca(V_{p,q},s_\mu s_\nu^*:p,q\in\bN^k\text{ with }p-q \in \Per_{G,\Lambda}, \mu,\nu \in \Lambda^p v)Q$. So $f$ is injective. Therefore
 $f|_{\ca(\Per_{G,\Lambda}) \otimes \D_\Lambda}$ induces an isomorphism from $\ca(\Per_{G,\Lambda}) \otimes \D_\Lambda$ onto $\mathcal{M}_{G,\Lambda}$.
\end{proof}

%%%%%%%%%%%%%%%%%%%%%%%%%%%%%
\section{KMS States of $\O_{G,\Lambda}$}
\label{S:KMS}

Let $G$ be an amenable group, and $\Lambda$ a strongly connected finite $k$-graph. Suppose that $(G,\Lambda)$ is a pseudo free and locally faithful self-similar action which satisfies the finite-state condition.
In this section,
we will achieve our main goal of this paper -- characterizing the KMS simplex structure of the self-similar $k$-graph C*-algebra $\O_{G,\Lambda}$ (Theorem \ref{T:KMSPer}).
It turns out that the KMS simplex of $\O_{G,\Lambda}$ is affinely isomorphic to the tracial state space of the C*-algebra of the periodicity group $\Per_{G,\Lambda}$
if the Perron-Frobenius eigenvector of $\Lambda$ preserves the $G$-action on $\Lambda^0$, and empty otherwise. 
This generalizes the main result of \cite{HLRS15} for strongly connected finite $k$-graphs C*-algebras (i.e., when $G$ is trivial).
% It is probably worth mentioning that, even in the case that $G$ being trivial, our approach is essentially different from that of \cite{HLRS15}.

\subsection{Basic properties}
\label{SS:basicKMS}

In this subsection, we recall the notion of KMS states from \cite{BR97} (also see \cite{CM08, Ped79}), and give some basic properties of KMS states for $\O_{G,\Lambda}$.

\begin{defn}
Let $A$ be a C*-algebra, $\alpha$ be an action of $\bR$ on $A$,
and $A^a$ be the set of all analytic elements of $A$. Let  $0<\beta<\infty$. A state $\tau$ of $A$ is called a \emph{KMS$_\beta$ state} of $(A,\mathbb{R},\alpha)$ if $\tau(xy)=\tau(y \alpha_{i\beta}(x))$ for all $x,y \in A^a$.
\end{defn}

Let $\gamma:\prod_{i=1}^k\bT \to \Aut(\mathcal{O}_{G,\Lambda})$ be the strongly continuous homomorphism such that
\[
\gamma_z(s_\mu)=z^{d(\mu)}s_\mu\text{ and } \gamma_z(u_g)=u_g\qforal  z \in \prod_{i=1}^k\bT, \mu \in \Lambda,g \in G.
\]
Let $r\in \prod_{i=1}^k\bR$. Define a strongly continuous homomorphism $\alpha^r:\mathbb{R} \to \Aut(\mathcal{O}_{G,\Lambda})$ by $\alpha^r_t:=\gamma_{e^{itr}}$. Notice that, for $\mu,\nu \in \Lambda,g \in G$ with $s(\mu)=g\cdot s(\nu)$, the function $\mathbb{C} \to \mathcal{O}_{G,\Lambda}$, $\xi \mapsto e^{i \xi r \cdot (d(\mu)-d(\nu))}s_\mu u_g s_\nu^*$ is an entire function.
So $s_\mu u_g s_\nu^*$ is an analytic element. By Proposition~\ref{P:genO}, in order to check the KMS$_\beta$ condition, it is sufficient to check whether it is valid on the set $\{s_\mu u_g s_\nu^*:\mu,\nu \in \Lambda,g \in G,s(\mu)=g\cdot s(\nu)\}$.
In this subsection, we study basic properties of KMS$_\beta$ states of the one-parameter dynamical system $(\mathcal{O}_{G,\Lambda},\mathbb{R},\alpha^r)$.

Recall from Proposition \ref{P:genO} that $\mathcal{O}_\Lambda$ naturally embeds into $\mathcal{O}_{G,\Lambda}$. So we regard $\O_\Lambda$ as a C*-subalgebra of $\O_{G,\Lambda}$ below.

\begin{prop}
\label{P:phiLambda}
Let $r\in \prod_{i=1}^k\bR$. Suppose that $\phi$ is a KMS$_\beta$ state of $(\mathcal{O}_{G,\Lambda},\mathbb{R},\alpha^r)$. Then
\begin{enumerate}
\item $\beta r=\ln (\rho(\Lambda))$;
\item the restriction of $\phi|_{\D_\Lambda}=\phi_\Lambda$, the Perron-Frobenius state $\phi_\Lambda$ of $\Lambda$;
\item $\phi(s_{g \cdot v})=\phi(s_v)$ for all $v \in \Lambda^0,g \in G$.
\end{enumerate}
\end{prop}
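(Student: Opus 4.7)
My proof proposal for Proposition \ref{P:phiLambda}:

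The plan is to derive all three statements from the KMS condition applied to a small, carefully chosen collection of analytic elements, together with the uniqueness parts of the Perron--Frobenius theorem (Theorem \ref{T:PF}). The starting observation is that $s_\mu$, $u_g$, and $s_\nu^*$ are all analytic for $\alpha^r$, with $\alpha^r_{i\beta}(s_\mu)=e^{-\beta r\cdot d(\mu)}s_\mu$, $\alpha^r_{i\beta}(s_\nu^*)=e^{\beta r\cdot d(\nu)}s_\nu^*$, and $\alpha^r_{i\beta}(u_g)=u_g$. This reduces every verification to manipulating scalars and short products.

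For (i), I would apply the KMS condition to $s_\mu^*\cdot s_\mu$ for $\mu\in v\Lambda^{e_i}$ to get
\[
\phi(s_\mu s_\mu^*)=\phi\bigl(s_\mu^*\alpha^r_{i\beta}(s_\mu)\bigr)=e^{-\beta r_i}\phi(s_{s(\mu)}).
\]
Writing $m_v:=\phi(s_v)$ and summing over $\mu\in v\Lambda^{e_i}$ via (CK4) gives $m_v=e^{-\beta r_i}(T_{e_i}m)_v$, i.e.\ $T_{e_i}m=e^{\beta r_i}m$ for every $i$. Since $\sum_v m_v=\phi(1)=1$, the vector $m$ is a nonzero, nonnegative eigenvector of every $T_{e_i}$ in $\ell^1(\Lambda^0)$, so by the uniqueness clauses of Theorem \ref{T:PF} we must have $e^{\beta r_i}=\rho(T_{e_i})=\rho(\Lambda)_i$ and $m=x_\Lambda$. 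This yields (i) and the formula $\phi(s_v)=x_\Lambda(v)$.

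For (ii), a single KMS computation gives $\phi(s_\mu s_\mu^*)=e^{-\beta r\cdot d(\mu)}\phi(s_{s(\mu)})=\rho(\Lambda)^{-d(\mu)}x_\Lambda(s(\mu))$, using (i). This shows $\phi$ agrees with $\phi_\Lambda$ on all rank-one range projections $s_\mu s_\mu^*$, which generate $\D_\Lambda$, so by the uniqueness clause in Theorem \ref{T:PF}(iv) we conclude $\phi|_{\D_\Lambda}=\phi_\Lambda$.

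For (iii), the relation $u_g s_v=s_{g\cdot v}u_g$ (recall $g|_v=g$) gives $s_{g\cdot v}=u_g s_v u_g^*$, and then the KMS condition together with $\alpha^r_{i\beta}(u_g)=u_g$ and $d(v)=0$ yields
\[
\phi(s_{g\cdot v})=\phi\bigl((u_g s_v)u_g^*\bigr)=\phi\bigl(u_g^*\alpha^r_{i\beta}(u_g s_v)\bigr)=\phi(u_g^* u_g s_v)=\phi(s_v).
\]
There is no real obstacle here; the only subtlety is keeping track of which side of the KMS identity to put each factor on so that the $\alpha^r_{i\beta}$-scalar factors simplify correctly, and remembering that $s_v, u_g, s_\mu$ are genuinely analytic so that the KMS condition as stated applies directly.
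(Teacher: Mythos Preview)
Your proof is correct and follows essentially the same route as the paper: derive $T_{e_i}m=e^{\beta r_i}m$ from the KMS identity and (CK4), invoke Theorem~\ref{T:PF} to pin down $\beta r=\ln\rho(\Lambda)$ and $m=x_\Lambda$, then read off $\phi(s_\mu s_\mu^*)=\rho(\Lambda)^{-d(\mu)}x_\Lambda(s(\mu))$ for (ii), and for (iii) conjugate $s_v$ by $u_g$ and use that $u_g$ is $\alpha^r$-fixed. The only cosmetic difference is that the paper packages (i) and (ii) into a single computation, whereas you separate them explicitly.
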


\begin{proof}
For $v \in \Lambda^0, 1\leq i \leq k$, we compute that
\begin{align*}
\phi(s_v)=\sum_{\mu \in v \Lambda^{e_i}}\phi(s_\mu s_\mu^*)=e^{-\beta r_i}\sum_{\mu \in v\Lambda^{e_i}}\phi(s_{s(\mu)})=e^{-\beta r_i}\sum_{w \in \Lambda^0}T_{e_i}(v,w)\phi(s_w).
\end{align*}
Let $y:=(\phi(s_v))_{v \in \Lambda^0}$. It follows that for $1 \leq i \leq k,T_{e_i}y=e^{\beta r_i}y$. By Theorem~\ref{T:PF}, $\beta r=\ln (\rho(\Lambda))$ and $x_\Lambda=y$. So for $\mu \in \Lambda$, we have
\[
\phi(s_\mu s_\mu^*)=e^{-\beta r d(\mu)}\phi(s_{s(\mu)})=\rho(\Lambda)^{-d(\mu)}x_{\Lambda}(s_\mu).
\]
By Theorem~\ref{T:PF}, one has $\phi \vert_{\D_\Lambda}=\phi_\Lambda$. This proves (i) and (ii) simultaneously.

Finally, for $v \in \Lambda^0,g \in G$, we compute that $\phi(s_{g \cdot v})=\phi(s_{g \cdot v}u_{g\vert_v}u_g^*)=\phi(u_g s_v u_g^*)=\phi(s_v)$, proving (iii).
\end{proof}

\begin{rem}
\label{R:KMSass}
Proposition \ref{P:phiLambda} (i) says that whenever $\beta \in (0,\infty)$ is given, $r$ is uniquely determined in order to satisfy the KMS$_\beta$ condition for
$(\mathcal{O}_{G,\Lambda},\mathbb{R},\alpha^r)$.
Also, to ensure that the KMS$_\beta$ simplex of $(\mathcal{O}_{G,\Lambda},\mathbb{R},\alpha^r)$ is nonempty,
it follows from Theorem \ref{T:PF} and Proposition \ref{P:phiLambda} (ii)-(iii) that 
 the Perron-Frobenuous eigenvector $x_\Lambda$ of $\Lambda$
has to preserve the action of $G$ on $\Lambda^0$: 
$x_\Lambda(g \cdot v)=x_\Lambda(v)$ for all $v \in \Lambda^0$ and $g \in G$. 
Furthermore, for any $\beta_1,\beta_2 \in (0,\infty)$, the KMS$_{\beta_1}$ simplex of $(\mathcal{O}_{G, \Lambda},\mathbb{R},\alpha^{r_1})$ is affinely
isomorphic to the KMS$_{\beta_2}$ simplex of $(\mathcal{O}_{G, \Lambda},\mathbb{R},\alpha^{r_2})$ (also refer to \cite[Section 7]{HLRS15}).
\end{rem}

Therefore,  \textsf{throughout the rest of this paper, we assume that
 \begin{align}
 \nonumber
 %\label{E:KMSass}
\beta&=1 \text{ (and so }r=\ln (\rho(\Lambda))),\\
 \label{E:KMSass}
x_\Lambda(g \cdot v)&=x_\Lambda(v)\qforal v \in \Lambda^0,g \in G.
 \end{align}
 }
That is, we compute the KMS$_1$ states of the preferred dynamical system $(\O_{G,\Lambda}, \mathbb{R},\alpha^{\ln(\rho(\Lambda))})$. As mentioned in the introduction
section, these
KMS states are simply called the \textit{KMS states of $\O_{G,\Lambda}$}.

%\begin{defn}
%The KMS$_1$ states of the preferred one-parameter dynamical system $(\O_{G,\Lambda}, \mathbb{R},\alpha^{\ln(\rho(\Lambda))})$ are called the \textit{KMS states of $\O_{G,\Lambda}$}.
%\end{defn}

%%%%%%%%%%%%%%%%%%%%%%%%%%%%%

\subsection{Auxiliary results} Some auxiliary results will be given in this subsection in order to compute the KMS states of $\O_{G,\Lambda}$.
This subsection is heavily motivated from \cite{HLRS15}.

In what follows, $\Lambda$ is a strongly connected finite $k$-graph, and $(G,\Lambda)$ is a pseudo free locally faithful self-similar action satisfying the finite-state condition.

Recall that, for $\mu,\nu \in \Lambda$, let
\[
\Lambda^{\min}(\mu,\nu):=\{(\alpha,\beta) \in \Lambda \times \Lambda:\mu\alpha=\nu\beta,d(\mu\alpha)=d(\mu)\lor d(\nu)\}.
\]

\begin{lem}
\label{L:min}
Let $(m,n,g)\in \bN^k\times \bN^k\times G$ and $v\in\Lambda^0$. If $(m,n,g)\not\in\Sigma^v_{G,\Lambda}$, then there is $\lambda\in v(\Lambda \setminus \Lambda^0)$ such that
$\Lambda^{\min} (\mu (g \cdot \lambda), \nu \lambda)=\mt$ for all $\mu,\nu\in\Lambda$ with $s(\mu)=g\cdot s(\nu)$, $s(\nu)=v$, and $d(\mu)-d(\nu) = m-n$.
\end{lem}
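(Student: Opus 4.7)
The plan is to take $\lambda$ to be a sufficiently long initial segment of an infinite path witnessing the failure of $(m,n,g)\in\Sigma^v_{G,\Lambda}$, and to derive a contradiction from any putative element of $\Lambda^{\min}(\mu(g\cdot\lambda),\nu\lambda)$ by reading off one and the same subpath of the supposed common extension in two incompatible ways.

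Concretely, since $(m,n,g)\notin\Sigma^v_{G,\Lambda}$, I will fix some $x\in v\Lambda^\infty$ with $\sigma^m(x)\neq\sigma^n(g\cdot x)$, and then choose a nonzero $p\in\bN^k$ large enough that $x(m,m+p)\neq g|_{x(0,n)}\cdot x(n,n+p)$ (ensuring $p\neq 0$ is harmless: if the vertices already disagree, any $p\neq 0$ works). I will then set $\lambda:=x(0,(m\lor n)+p)$, which visibly lies in $v(\Lambda\setminus\Lambda^0)$. Now I fix any $\mu,\nu$ satisfying the hypotheses and suppose for contradiction that $(\alpha,\beta)\in\Lambda^{\min}(\mu(g\cdot\lambda),\nu\lambda)$; write $\tau:=\mu(g\cdot\lambda)\alpha=\nu\lambda\beta$, which has degree $d(\lambda)+d(\mu)\lor d(\nu)$.

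The crux is the position identity $d(\nu)+m=d(\mu)+n$, immediate from $d(\mu)-d(\nu)=m-n$. Reading the length-$p$ subpath of $\tau$ starting at this position from the factorization $\tau=\nu\lambda\beta$ yields $\lambda(m,m+p)=x(m,m+p)$ (since $m+p\leq d(\lambda)$); reading it from $\tau=\mu(g\cdot\lambda)\alpha$ yields $(g\cdot\lambda)(n,n+p)$ (since $n+p\leq d(\lambda)$), and applying Definition~\ref{D:ss}(i) to the factorization $\lambda=\lambda(0,n)\lambda(n,n+p)\lambda(n+p,d(\lambda))$ rewrites this as $g|_{x(0,n)}\cdot x(n,n+p)$. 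These two readings of the same subpath of $\tau$ must agree, contradicting the choice of $p$, so $\Lambda^{\min}(\mu(g\cdot\lambda),\nu\lambda)=\mt$ as desired. The only conceptual obstacle is the bookkeeping needed to recognize that both factorizations of $\tau$ extract precisely the same range, which is where the degree hypothesis $d(\mu)-d(\nu)=m-n$ is essential; once this alignment is established the contradiction is purely formal.
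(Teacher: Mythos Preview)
Your proof is correct and follows essentially the same strategy as the paper's: choose $\lambda$ as a sufficiently long initial segment of an infinite path witnessing $(m,n,g)\notin\Sigma^v_{G,\Lambda}$, then show that any common extension of $\mu(g\cdot\lambda)$ and $\nu\lambda$ would force two readings of the same subpath to agree when they cannot. The paper first reduces $(m,n)$ to the disjoint pair $(m',n')=((m-n)\vee 0,(n-m)\vee 0)$ and factors out a common prefix of $\mu,\nu$ before comparing segments, whereas you work directly with $m,n$ via the alignment $d(\nu)+m=d(\mu)+n$ and argue by contradiction on $\tau$; your bookkeeping is a bit more streamlined, but the content is the same.
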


\begin{proof}
The proof is completely similar to \cite[Lemma 8.3]{HLRS15}.

Let $m'=(m-n)\vee 0$ and $n'=(n-m)\vee 0$. Then $m=m'+p$ and $n=n'+p$ for some $p\in \bN^k$. Clearly, $(m',n',g)\not\in\Sigma^v_{G,\Lambda}$
as $(m,n,g)\not\in\Sigma^v_{G,\Lambda}$.
Then there is $x\in v\Lambda^\infty$ such that
$\sigma^{m'}(x)\ne \sigma^{n'}(g\cdot x)$.
So there is $0\ne \ell \in \bN^k$ such that $\sigma^{m'}(x)(0,\ell)\ne \sigma^{n'}(g\cdot x)(0,\ell)$.
Let
\[
%a:=m'+n'+\ell \text{ and }
\lambda:=x(0,m'+n'+\ell).
\]

Let $p'\in\bN^k$ be such that $d(\mu)=m'+p'$ and $d(\nu)=n'+p'$. Then factor $\mu$ and $\nu$ as $\mu=\alpha\mu'$ and $\nu=\beta\nu'$ with $d(\alpha)=d(\beta)=p'$.

If $\alpha\ne \beta$, then $\Lambda^{\min}(\mu,\nu)=\mt$ and so $\Lambda^{\min}(\mu g\cdot \lambda,\nu\lambda)=\mt$.

If $\alpha=\beta$, then $\Lambda^{\min}(\mu g\cdot \lambda,\nu\lambda)=\Lambda^{\min}(\mu' g\cdot \lambda,\nu'\lambda)$.
But
\begin{align*}
(\mu'g\cdot \lambda) (m'+n',a)
&=(g\cdot \lambda)(n',n'+\ell)=(g\cdot x)(n',n'+\ell)\\
&=\sigma^{n'}(g\cdot x)(0,\ell)\\
&\ne \sigma^{m'}( x)(0,\ell)\\
&= x(m',m'+\ell)=\lambda(m',m'+\ell)\\
&=(\nu'\lambda) (m'+n',m'+n'+\ell).
\end{align*}
%%%%%%%%%%
This implies $\Lambda^{\min}(\mu' g\cdot \lambda,\nu'\lambda)=\mt$, and so $\Lambda^{\min}(\mu g\cdot \lambda,\nu\lambda)=\mt$.
\end{proof}

\begin{lem}
\label{L:coma}
Let $n\in \bN$, $z\in \bZ^k$, $p_i$, $q_i\in\bN^k$, $g_i\in G$, $v_i\in\Lambda^0$ ($i=1,\ldots,n$) be such that $p_i-q_i=z$, $(p_i,q_i,g_i)\not\in\Sigma_{G,\Lambda}^{v_i}$.
Then, there exist (a {\rm common})  $a\in \mathbb{N}^k\setminus\{0\}$ and $\lambda_{g_i,v_i} \in v_i\Lambda^a$ such that for any $\mu_i,\nu_i \in \Lambda$ with
$s(\mu_i)=g_i \cdot s(\nu_i)$, $s(\nu_i)=v_i$, and $d(\mu_i)-d(\nu_i)=z$, we have $\Lambda^{\min}(\mu_i(g_i \cdot \lambda_{g_i,v_i}),\nu_i \lambda_{g_i,v_i})=\mt$.
\end{lem}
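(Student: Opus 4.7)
The plan is to apply Lemma~\ref{L:min} separately for each index $i$ and then to extend each resulting witness path to a common degree using source-freeness of $\Lambda$.

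First, for each $i=1,\dots,n$, since $(p_i,q_i,g_i)\notin\Sigma_{G,\Lambda}^{v_i}$, Lemma~\ref{L:min} produces a path $\lambda_i\in v_i(\Lambda\setminus\Lambda^0)$ (so $d_i:=d(\lambda_i)\neq 0$) such that for every admissible pair $(\mu_i,\nu_i)$ with $s(\mu_i)=g_i\cdot s(\nu_i)$, $s(\nu_i)=v_i$ and $d(\mu_i)-d(\nu_i)=z$, one has $\Lambda^{\min}(\mu_i(g_i\cdot\lambda_i),\nu_i\lambda_i)=\mt$. Set $a:=\bigvee_{i=1}^{n} d_i\in\bN^k\setminus\{0\}$, and, using the source-freeness of $\Lambda$, for each $i$ choose $\omega_i\in s(\lambda_i)\Lambda^{a-d_i}$ and put $\lambda_{g_i,v_i}:=\lambda_i\omega_i\in v_i\Lambda^a$.

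Second, I would verify that this common degree $a$ still works. Suppose, toward a contradiction, that there exist admissible $\mu_i,\nu_i$ and $(\gamma,\delta)\in\Lambda^{\min}(\mu_i(g_i\cdot\lambda_{g_i,v_i}),\nu_i\lambda_{g_i,v_i})$. Expanding $g_i\cdot\lambda_{g_i,v_i}=(g_i\cdot\lambda_i)(g_i\vert_{\lambda_i}\cdot\omega_i)$ via Condition (i) of Definition~\ref{D:ss}, the equality $\mu_i(g_i\cdot\lambda_{g_i,v_i})\gamma=\nu_i\lambda_{g_i,v_i}\delta$ becomes
\[
\mu_i(g_i\cdot\lambda_i)(g_i\vert_{\lambda_i}\cdot\omega_i)\gamma=\nu_i\lambda_i\omega_i\delta,
\]
which exhibits a common extension of $\mu_i(g_i\cdot\lambda_i)$ and $\nu_i\lambda_i$. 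Applying the unique factorization property of the $k$-graph $\Lambda$ to both sides of this equality and truncating at degree $(d(\mu_i)\vee d(\nu_i))+d_i$ then yields a pair in $\Lambda^{\min}(\mu_i(g_i\cdot\lambda_i),\nu_i\lambda_i)$, contradicting Step~1.

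There is no real obstacle here: Lemma~\ref{L:min} does the creative work of manufacturing each individual witness, while the present lemma is essentially a bookkeeping step that upgrades to uniform degree. The only points requiring a line of justification are (a) that $a\neq 0$, which holds because Lemma~\ref{L:min} delivers \emph{nontrivial} witnesses $\lambda_i\in v_i(\Lambda\setminus\Lambda^0)$, and (b) the factorization reduction in Step~2, which is the standard argument showing that emptiness of $\Lambda^{\min}(\alpha,\beta)$ is preserved under replacing $\alpha,\beta$ by arbitrary common-length extensions $\alpha\omega,\beta\tau$.
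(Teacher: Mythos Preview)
Your argument is correct. The key reduction in Step~2 is the standard fact that $\Lambda^{\min}(\alpha,\beta)=\mt$ if and only if $\alpha$ and $\beta$ admit no common extension whatsoever; since any common extension of $\mu_i(g_i\cdot\lambda_i)(g_i\vert_{\lambda_i}\cdot\omega_i)$ and $\nu_i\lambda_i\omega_i$ is in particular a common extension of $\mu_i(g_i\cdot\lambda_i)$ and $\nu_i\lambda_i$, truncating via unique factorisation to degree $(d(\mu_i)\vee d(\nu_i))+d_i$ gives the desired contradiction. The appeal to source-freeness in choosing the $\omega_i$ is legitimate under the paper's standing hypotheses.

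Your route differs from the paper's. Rather than treating Lemma~\ref{L:min} as a black box, the paper reopens its proof: since $p_i-q_i=z$ for all $i$, the quantities $p':=z\vee 0$ and $q':=(-z)\vee 0$ are independent of $i$, and for each $i$ one locates $x_i\in v_i\Lambda^\infty$ and $\ell_i\ne 0$ with $\sigma^{p'}(x_i)(0,\ell_i)\ne\sigma^{q'}(g_i\cdot x_i)(0,\ell_i)$. The paper then takes $a:=p'+q'+\sum_j\ell_j$ and $\lambda_{g_i,v_i}:=x_i(0,a)$, relying on the observation (implicit in the proof of Lemma~\ref{L:min}) that enlarging $\ell_i$ does no harm. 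Your approach is more modular: it uses Lemma~\ref{L:min} only through its statement and handles the uniformisation by a clean extension-and-truncation argument, at the cost of invoking the factorisation property explicitly. The paper's approach is more hands-on but avoids that extra step.
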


\begin{proof}
Let $p'=(p_i-q_i)\vee 0$ and $q'=(q_i-p_i)\vee 0$.
From the proof of Lemma \ref{L:min}, for each $(p_i,q_i,g_i)$,
Then there is $x_i\in v_i\Lambda^\infty$ such that
$\sigma^{p'}(x)\ne \sigma^{q'}(g_i\cdot x)$.
So there is $0\ne \ell_i \in \bN^k$ such that $\sigma^{p'}(x_i)(0,\ell_i)\ne \sigma^{q'}(g_i\cdot x_i)(0,\ell_i)$.
Let
$
a:=p'+q'+\sum_{i=1}^n\ell_i
$  and 
$
\lambda_{g_i,v_i}:=x_i(0,a).
$
Then it is easy to see that $\lambda_{g_i,v_i}$ satisfy the required conditions.
\end{proof}

\begin{lem}
\label{L:aK}
Let $p,q\in \bN^k$. If $(p,q,G)\cap\Sigma_{G,\Lambda}=\mt$,
then there exist $a \in \mathbb{N}^k\setminus\{0\}$ and $0<K <1$, such that for any $\mu,\nu \in \Lambda,g \in G$ with $s(\mu)=g \cdot s(\nu)$
and $d(\mu)-d(\nu)=p-q$, we have
\begin{align}
\label{E:aK}
\mathop{\sum_{\lambda \in s(\nu)\Lambda^{na}}}_{\Lambda^{\min}(\mu(g \cdot \lambda),\nu \lambda)\neq\mt}\phi_\Lambda(s_{\nu\lambda} s_{\nu\lambda}^*)
\le K^n \phi_\Lambda(s_\nu s_\nu^*)
\quad (n\in \bN).
\end{align}
\end{lem}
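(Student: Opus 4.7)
I would prove the bound in two stages: a reduction to the base case $n=1$ by induction on $n$, and the base case itself, which carries the real content of the lemma.

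\emph{Induction step.} Suppose the $n=1$ instance holds with uniform $a$ and $K\in(0,1)$, and that the bound has been verified for some $n$. Given $\lambda\in s(\nu)\Lambda^{(n+1)a}$, use the unique factorization property of $k$-graphs to write $\lambda=\lambda_1\lambda_2$ with $d(\lambda_1)=na$ and $d(\lambda_2)=a$. A direct unique-factorization argument (factoring a hypothetical common extension at total degree $(d(\mu)\vee d(\nu))+na$) shows that
\[
\Lambda^{\min}(\mu(g\cdot\lambda),\nu\lambda)\neq\mt\ \Longrightarrow\ \Lambda^{\min}(\mu(g\cdot\lambda_1),\nu\lambda_1)\neq\mt.
\]
For each such $\lambda_1$, the inner sum over $\lambda_2$ is exactly the $n=1$ sum applied to the triple $(\mu':=\mu(g\cdot\lambda_1),\,\nu':=\nu\lambda_1,\,g':=g|_{\lambda_1})$. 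This triple satisfies $d(\mu')-d(\nu')=p-q$, $s(\mu')=g'\cdot s(\nu')$, and $(p,q,g')\notin\Sigma_{G,\Lambda}$ because of the blanket hypothesis $(p,q,G)\cap\Sigma_{G,\Lambda}=\mt$. Applying the $n=1$ bound inside and the induction hypothesis outside yields the $K^{n+1}$ estimate.

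\emph{Base case.} For each $v\in\Lambda^0$ and each $g\in G$, Lemma~\ref{L:min} provides a \emph{forbidden} path $\lambda_{v,g}$ of some length with $\Lambda^{\min}(\mu(g\cdot\lambda_{v,g}),\nu\lambda_{v,g})=\mt$ for all permissible $\mu,\nu$. The main obstacle, and the true core of the proof, is to choose a single length $a$ that works uniformly across all the (possibly infinitely many) $g\in G$. I would exploit the finite-state condition together with the finiteness of $\Lambda^0$: for each $v$ and each $a$, the natural map $\Phi_{v,a}\colon G\to\mathrm{Sym}(v\Lambda^a)$, $g\mapsto(\lambda\mapsto g\cdot\lambda)$, has finite image, and whether a given $\lambda\in v\Lambda^a$ is forbidden for $g$ depends on $g$ only through $\Phi_{v,a}(g)$. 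Hence only the finitely many $\Phi_{v,a}$-types of $g$ matter at level $a$. The strategy is then: pick an initial level, choose representatives for each type, apply Lemma~\ref{L:coma} to these finitely many representatives to obtain a common length and forbidden paths, and iterate if necessary to ensure consistency with the finer types appearing at the enlarged level. Finite-state is what forces the iteration to stabilize, because it limits the complexity of each single $g$'s action and prevents the types from refining indefinitely in a way that outruns the forbidden-path lengths.

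\emph{Concluding estimate.} Once a uniform $a$ is in hand, for any $\mu,\nu,g$ satisfying the hypotheses, set $v=s(\nu)$ and estimate
\[
\sum_{\substack{\lambda\in v\Lambda^a\\ \Lambda^{\min}(\mu(g\cdot\lambda),\nu\lambda)\neq\mt}}\phi_\Lambda(s_{\nu\lambda}s_{\nu\lambda}^*)\;\leq\;\phi_\Lambda(s_\nu s_\nu^*)-\phi_\Lambda(s_{\nu\lambda_{v,g}}s_{\nu\lambda_{v,g}}^*).
\]
By Theorem~\ref{T:PF}(iv), the ratio $\phi_\Lambda(s_{\nu\lambda_{v,g}}s_{\nu\lambda_{v,g}}^*)/\phi_\Lambda(s_\nu s_\nu^*)=\rho(\Lambda)^{-a}x_\Lambda(s(\lambda_{v,g}))/x_\Lambda(v)$ is bounded below by a uniform constant $c>0$, using positivity of the Perron--Frobenius eigenvector $x_\Lambda$ on the finite set $\Lambda^0$. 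Setting $K:=1-c\in(0,1)$ then finishes the base case, and the induction completes the proof. The hardest step is the finite-state argument for uniform $a$; once that is in place, everything else is a routine Perron--Frobenius estimate.
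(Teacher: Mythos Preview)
Your induction scheme is correct and essentially matches the paper's. The gap is in your base case: the ``iterate until the types stabilize'' argument is not justified. The finite-state condition bounds $|\{g|_\mu:\mu\in\Lambda\}|$ for each \emph{fixed} $g$, but says nothing about the number of $\Phi_{v,a}$-types of elements of $G$ as a whole; there is no reason this number should stabilize as $a$ grows, so the iteration you sketch need not terminate.

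The paper sidesteps this by observing that uniformity over all of $G$ is never needed. Fix $g$ at the outset; by finite-state the set $F:=\{g|_\mu:\mu\in\Lambda\}$ is finite. In your own induction step the new group element is $g|_{\lambda_1}$, and at every stage of the induction the relevant element lies in $F$. Hence it suffices to choose $a$ and the forbidden paths $\lambda_{h,v}$ uniformly over the finite set $F\times\Lambda^0$, which is a single direct application of Lemma~\ref{L:coma} (using Lemma~\ref{L:1toall} to see that $(p,q,h)\notin\Sigma_{G,\Lambda}$ for every $h\in F$). One then takes $K$ with $x_\Lambda(v)-\rho(\Lambda)^{-a}x_\Lambda(s(\lambda_{h,v}))<Kx_\Lambda(v)$ for all $(h,v)\in F\times\Lambda^0$, and the induction and Perron--Frobenius estimate proceed exactly as you describe. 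This yields $a,K$ depending on $g$, which is technically weaker than the lemma as literally stated but is all that the induction requires and all that is used in Theorem~\ref{T:kms}.
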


\begin{proof}
Let $(p,q)\in\bN^k$ be such that $(p,q,G)\cap\Sigma_{G,\Lambda}=\mt$.
Since $\varphi$ satisfies the finite-state condition, the set $F:=\{g \vert_\mu: \mu \in \Lambda\}$ is finite.
Notice that $(m,n,h)\not\in\Sigma_{G,\Lambda}$ for any $h\in F$ and $m,n\in\bN^k$ with $m-n=p-q$ by Lemma \ref{L:1toall}.
By Lemma~\ref{L:coma},
there exist $a \in \mathbb{N}^k \setminus \{0\}$ and $\{\lambda_{h,v} \in v\Lambda^{a}\}_{h \in F,v \in \Lambda^0}$ such that for any $\mu,\nu \in \Lambda$ with
$s(\mu)=h \cdot s(\nu)$, $s(\nu)=v$ and $d(\mu)-d(\nu)=p-q$, we have $\Lambda^{\min}(\mu(h \cdot \lambda_{h,v}),\nu \lambda_{h,v})=\mt$. By Theorem~\ref{T:PF}, there exists $0<K <1$ such that $\phi_\Lambda(s_v-s_{\lambda_{h,v}}s_{\lambda_{h,v}}^*) < K \phi_\Lambda(s_v)$ for all $h \in F,v \in \Lambda^0$. Again by Theorem~\ref{T:PF},
\begin{align}
\label{E:ineqK}
x_\Lambda(v)-\rho(\Lambda)^{-a}x_\Lambda(s(\lambda_{h,v}))< K x_\Lambda(v) \qforal h \in F,\ v \in \Lambda^0.
\end{align}

Clearly, the inequality \eqref{E:aK} holds true for $n=0$.
Suppose that its holds true for $n \geq 1$. Then we prove the inequality for $n+1$.
Indeed,
\begin{align*}
&\mathop{\sum_{\lambda \in s(\nu)\Lambda^{(n+1)a}}}_{\Lambda^{\min}(\mu(g \cdot \lambda),\nu \lambda)\neq\mt}\phi_\Lambda(s_{\nu\lambda} s_{\nu\lambda}^*)\\
&=\mathop{\sum_{\omega \in s(\nu)\Lambda^{na}}}_{\Lambda^{\min}(\mu(g \cdot \omega),\nu \omega)\neq\mt}\;
 \mathop{ \sum_{\eta \in s(\nu)\Lambda^{a}}}_{\Lambda^{\min}(\mu(g \cdot \omega)(g \vert_\omega \cdot \eta),\nu \omega \eta)\neq\mt}\phi_\Lambda(s_{\nu\omega\eta} s_{\nu\omega\eta}^*)\\
  %%%
 &=\mathop{\sum_{\omega \in s(\nu)\Lambda^{na}}}_{\Lambda^{\min}(\mu(g \cdot \omega),\nu \omega)\neq\mt}\;
  \sum_{\eta \ne \lambda_{g|_\omega, s(\omega)}}\phi_\Lambda(s_{\nu\omega\eta} s_{\nu\omega\eta}^*)\\
  %%%
  &\leq\mathop{ \sum_{\omega \in s(\nu)\Lambda^{na}}}_{\Lambda^{\min}(\mu(g \cdot \omega),\nu \omega)\neq\mt}\Big(\phi_\Lambda(s_{\nu\omega}s_{\nu\omega}^*)-\phi_\Lambda(s_{\nu\omega\lambda_{g\vert_\omega,s(\omega)}}s_{\nu\omega\lambda_{g\vert_\omega,s(\omega)}}^*)\Big)\\
  &=\mathop{\sum_{\omega \in s(\nu)\Lambda^{na}}}_{\Lambda^{\min}(\mu(g \cdot \omega),\nu \omega)\neq\mt}\rho(\Lambda)^{-d(\nu\omega)}\Big( x_\Lambda(s(\omega))
  -\rho(\Lambda)^{-a}x_\Lambda(s(\lambda_{g\vert_\omega,s(\omega)})) \Big)\\
  &<\mathop{\sum_{\omega \in s(\nu)\Lambda^{na}}}_{\Lambda^{\min}(\mu(g \cdot \omega),\nu \omega)\neq\mt}\rho(\Lambda)^{-d(\nu\omega)} Kx_\Lambda(s(\omega))
        \ (\text{by }\eqref{E:ineqK})\\
  &=K\mathop{\sum_{\omega \in s(\nu)\Lambda^{na}}}_{\Lambda^{\min}(\mu(g \cdot \omega),\nu \omega)\neq\mt}\phi_\Lambda(s_{\nu\omega}s_{\nu\omega}^*)\\
  &\le K^{n+1}\phi_\Lambda(s_\nu s_\nu^*) \ (\text{by inductive assumption}).
\end{align*}
Thus \eqref{E:aK} is proved for all $n\in \bN$.
\end{proof}

In Lemma \ref{L:aK}, since $(p,q,G)\cap\Sigma_{G,\Lambda}=\mt$, one necessarily has $p\ne q$. The following lemma essentially handles with the case of $p=q$.

\begin{lem}\label{L:aKp=q}
Let $\nu \in \Lambda$ and $g\ne h \in G$. Then there exist $a \in \mathbb{N}^k\setminus\{0\}$ and $0<K <1$ such that 
\begin{align}
\label{E:aKp=q}
\mathop{\sum_{\lambda \in s(\nu)\Lambda^{na}}}_{g \cdot \lambda =h \cdot \lambda}\phi_\Lambda(s_{\nu\lambda} s_{\nu\lambda}^*) \le K^n \phi_\Lambda(s_\nu s_\nu^*)
\quad (n\in \bN).
\end{align}
\end{lem}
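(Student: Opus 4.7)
The condition $g\cdot\lambda = h\cdot\lambda$ is equivalent to $k\cdot\lambda = \lambda$ with $k := h^{-1}g \neq 1_G$, so I will prove the analogous decay estimate for the fixed-point set of $k$. Three structural facts drive the argument. First, the finite-state condition guarantees that $F := \{k|_\mu : \mu \in \Lambda\}$ is finite. Second, since $k \cdot (\omega\eta) = (k\cdot\omega)(k|_\omega \cdot \eta)$, the equation $k \cdot (\omega\eta) = \omega\eta$ holds if and only if $k\cdot\omega = \omega$ and $k|_\omega\cdot\eta = \eta$; this lets the induction split cleanly. Third, pseudo-freeness ensures that whenever $k\cdot\omega = \omega$ the restriction $k|_\omega$ is non-trivial, so every restriction we encounter lies inside the finite set $F\setminus\{1_G\}$.

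The main step is a uniform one-step decay: I aim to produce $a \in \bN^k\setminus\{0\}$ and $K \in (0,1)$ such that, for every $k' \in F\setminus\{1_G\}$ and every $v \in \Lambda^0$,
\[
\sum_{\eta \in v\Lambda^a,\, k'\cdot \eta = \eta} \phi_\Lambda(s_\eta s_\eta^*) \leq K\, \phi_\Lambda(s_v).
\]
Pairs with $k'\cdot v \neq v$ give an empty sum. Otherwise local faithfulness supplies some $\mu_{k',v} \in v\Lambda$ with $k'\cdot \mu_{k',v} \neq \mu_{k',v}$, and any extension to a longer prescribed degree remains non-fixed (its prefix already fails). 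Since $(F\setminus\{1_G\}) \times \Lambda^0$ is finite, I take a single nonzero $a$ dominating all the degrees $d(\mu_{k',v})$, extend each witness to degree $a$, and use the explicit Perron-Frobenius formula from Theorem \ref{T:PF}(iv) together with $\phi_\Lambda(s_v) = x_\Lambda(v)$ to bound the fixed-point mass by $\phi_\Lambda(s_v) - \rho(\Lambda)^{-a}x_\Lambda(s(\mu_{k',v})) < \phi_\Lambda(s_v)$; the maximum of the resulting constants over the finite parameter set supplies the required $K < 1$.

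The inductive step then parallels that of Lemma \ref{L:aK}. Writing $\lambda = \omega\eta$ with $\omega \in s(\nu)\Lambda^{na}$, $\eta \in s(\omega)\Lambda^a$, the condition $k\cdot\lambda = \lambda$ splits via the second structural fact above, and the inner sum over $\eta$ is at most $K\,\phi_\Lambda(s_{\nu\omega}s_{\nu\omega}^*)$ by applying the one-step estimate to the pair $(k|_\omega, s(\omega))$. Summing over $\omega$ and invoking the inductive hypothesis closes the argument. The chief obstacle is securing a \emph{common} $a$ valid for every pair $(k',v)$ simultaneously; this is precisely where the finite-state condition meets the finiteness of $\Lambda^0$, and it is the only genuinely new ingredient beyond what appeared in the proof of Lemma \ref{L:aK}.
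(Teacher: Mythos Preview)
Your proposal is correct and follows essentially the same strategy as the paper: use local faithfulness together with the finite-state condition and pseudo-freeness to produce a uniform one-step decay estimate over a finite parameter set, then induct exactly as in Lemma~\ref{L:aK}. The only difference is cosmetic---you reduce $g\cdot\lambda=h\cdot\lambda$ to $k\cdot\lambda=\lambda$ with $k=h^{-1}g$ and track the finite set $\{k|_\mu:\mu\in\Lambda\}$, whereas the paper works directly with the finite set of pairs $\{(g|_\lambda,h|_\lambda):g|_\lambda\neq h|_\lambda\}$; the two formulations are equivalent and yield the same bound.
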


%%%%%%%%%%%%%%%%%%%%
\begin{proof}
For any $v \in \Lambda^0$ and $\lambda \in \Lambda$ with $g \vert_\lambda \neq h \vert_\lambda$, there exists $\omega \in v(\Lambda \setminus \Lambda^0)$ such that $g \vert_\lambda \cdot \omega \neq h \vert_\lambda \cdot \omega$ as $(G,\Lambda)$ is locally faithful. Since $(G,\Lambda)$ satisfies the finite-state condition, the set $F:=\{(g \vert_\lambda,h \vert_\lambda):\lambda \in \Lambda,g \vert_\lambda \neq h \vert_\lambda\}$ is finite. Then there exist $a \in \mathbb{N}^k\setminus\{0\}$ and $\{\lambda_{v,g',h'} \in v\Lambda^a\}_{v \in \Lambda^0,(g',h') \in F}$ such that $g' \cdot \lambda_{v,g',h'} \neq h' \cdot \lambda_{v,g',h'}$ for all $v \in \Lambda^0$ and $(g',h') \in F$. So there is $0<K<1$ such that $\phi_\Lambda(s_v-s_{\lambda_{v,g',h'}}s_{\lambda_{v,g',h'}}^*)<K \phi_\Lambda(s_v)$ for all $v \in \Lambda^0$ and $(g',h') \in F$. By Theorem~\ref{T:PF},
\begin{align}
\label{E:ineqK2}
x_\Lambda(v)-\rho(\Lambda)^{-a}x_\Lambda(s(\lambda_{v,g',h'}))<K x_\Lambda(v)\qforal v \in \Lambda^0, (g',h') \in F.
\end{align}

As before, we show that \eqref{E:aKp=q} holds true for $n+1$ under the inductive assumption that it holds true for $n$.
We compute that
\begin{align*}
&\mathop{\sum_{\lambda \in s(\nu)\Lambda^{(n+1)a}}}_{g \cdot \lambda =h \cdot \lambda}\phi_\Lambda(s_{\nu\lambda} s_{\nu\lambda}^*)\\
&=\mathop{\sum_{\omega \in s(\nu)\Lambda^{na}}}_{g \cdot \omega=h \cdot \omega}
     \mathop{\sum_{\eta \in s(\omega)\Lambda^a}}_{g\vert_\omega \cdot \eta=h \vert_{\omega} \cdot \eta}\phi_\Lambda(s_{\nu\omega\eta}s_{\nu\omega\eta}^*)\\
&=\mathop{\sum_{\omega \in s(\nu)\Lambda^{na}}}_{g \cdot \omega=h \cdot \omega}\Big(\phi_\Lambda(s_{\nu\omega}s_{\nu\omega}^*)
      -\mathop{\sum_{\eta \in s(\omega)\Lambda^a}}_{g\vert_\omega \cdot \eta \neq h \vert_{\omega} \cdot \eta}\phi_\Lambda(s_{\nu\omega\eta}s_{\nu\omega\eta}^*) \Big)\\
&\leq\mathop{\sum_{\omega \in s(\nu)\Lambda^{na}}}_{g \cdot \omega=h \cdot \omega}\phi_\Lambda(s_{\nu\omega}s_{\nu\omega}^*-s_{\nu\omega\lambda_{s(\omega),g\vert_\omega,h\vert_\omega}}s_{\nu\omega\lambda_{s(\omega),g\vert_\omega,h\vert_\omega}}^*)\\
&\quad \text{($g\vert_\omega\neq h\vert_\omega$ since $\varphi$ is pseudo free)}\\
&=\mathop{\sum_{\omega \in s(\nu)\Lambda^{na}}}_{g \cdot \omega=h \cdot \omega}\rho(\Lambda)^{-d(\nu\omega)}(x_{\Lambda}(s(\omega))-\rho(\Lambda)^{-a}x_\Lambda(s(\lambda_{s(\omega),g\vert_\omega,h\vert_\omega})))\\
&\quad \text{(by Theorem~\ref{T:PF})}\\
&\le\mathop{\sum_{\omega \in s(\nu)\Lambda^{na}}}_{g \cdot \omega=h \cdot \omega}\rho(\Lambda)^{-d(\nu\omega)}K x_\Lambda(s(\omega))\ (\text{by }\eqref{E:ineqK2})
\\&=K\mathop{\sum_{\omega \in s(\nu)\Lambda^{na}}}_{g \cdot \omega=h \cdot \omega\}}\phi_\Lambda(s_{\nu\omega}s_{\nu\omega}^*) \text{ (by Theorem~\ref{T:PF})}
\\&\le K^{n+1} \phi_\Lambda(s_\nu s_\nu^*) \ (\text{by inductive assumption}).
\end{align*}
We are done.
\end{proof}

%%%%%%%%%%%%%%%%%%%%
The following lemma can follow from some properties of states combining with Theorem \ref{T:tensor} and Proposition \ref{P:phiLambda}. But we prove it by invoking the Perron-Frobenius theory
directly.

\begin{lem}\label{phi(s_v V_m,n)}
Let $\phi$ be a KMS state on $\O_{G,\Lambda}$ and let $V_{p,q}$ be a periodicity unitary defined in Theorem~\ref{T:Vrep}. Then $\phi(s_v V_{p,q})=x_\Lambda(v)\phi(V_{p,q})$ for all $v \in \Lambda^0$.
\end{lem}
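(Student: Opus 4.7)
The plan is to exploit the centrality of $V_{p,q}$ together with the KMS condition to show that the function $y : \Lambda^0 \to \mathbb{C}$ defined by $y(v) := \phi(s_v V_{p,q})$ is a simultaneous eigenvector of each coordinate matrix $T_{e_i}$ with eigenvalue $\rho(T_{e_i})$, and then to invoke Theorem~\ref{T:PF}(ii).

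First, for any $1 \le i \le k$, I would apply (CK4) at $n=e_i$ and use that $V_{p,q}$ is central in $\O_{G,\Lambda}$ (Theorem~\ref{T:Vrep}) to write
\[
s_v V_{p,q} \;=\; \sum_{\mu \in v\Lambda^{e_i}} s_\mu s_\mu^* V_{p,q}.
\]
Since under our standing assumption $\beta = 1$ and $r = \ln(\rho(\Lambda))$ we have $\alpha_i(s_\mu) = \rho(\Lambda)^{-d(\mu)} s_\mu$, the KMS$_1$ condition together with centrality of $V_{p,q}$ (used to move it past $s_\mu^*$) and (CK3) give
\[
\phi(s_\mu s_\mu^* V_{p,q}) \;=\; \rho(\Lambda)^{-e_i}\, \phi(s_\mu^* V_{p,q} s_\mu) \;=\; \rho(\Lambda)^{-e_i}\, \phi(s_{s(\mu)} V_{p,q})
\]
for each $\mu \in v\Lambda^{e_i}$. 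Summing over $\mu$ yields
\[
y(v) \;=\; \rho(T_{e_i})^{-1} \sum_{w \in \Lambda^0} T_{e_i}(v,w)\, y(w),
\]
i.e.\ $T_{e_i} y = \rho(T_{e_i}) y$ for every $i$.

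Since $\Lambda^0$ is finite, $y$ lies in $\ell^1(\Lambda^0)$, and Theorem~\ref{T:PF}(ii) (which is stated for complex $\ell^1$ eigenvectors and yields uniqueness up to a scalar) provides $c \in \mathbb{C}$ with $y = c\, x_\Lambda$. To identify $c$, I would sum over $v$: because $\Lambda$ is finite we have $\sum_{v \in \Lambda^0} s_v = 1$ in $\O_{G,\Lambda}$, and $\|x_\Lambda\|_1 = \sum_v x_\Lambda(v) = 1$, so
\[
c \;=\; \sum_{v \in \Lambda^0} c\, x_\Lambda(v) \;=\; \sum_{v \in \Lambda^0} \phi(s_v V_{p,q}) \;=\; \phi(V_{p,q}),
\]
which gives $\phi(s_v V_{p,q}) = x_\Lambda(v)\, \phi(V_{p,q})$ as desired.

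There is no real obstacle here; the only point requiring mild care is that Theorem~\ref{T:PF}(ii) is applied to a complex-valued $y$, but this is permitted since the cited statement concludes $y \in \mathbb{C}\, x_\Lambda$. The argument is essentially a self-similar analogue of the Perron--Frobenius step used in \cite{HLRS15}, with centrality of $V_{p,q}$ replacing the identity $V_{p,q}=1$ available in the pure $k$-graph case.
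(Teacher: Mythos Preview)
Your proof is correct and follows essentially the same route as the paper: use (CK4), centrality of $V_{p,q}$, and the KMS relation to show that $y(v)=\phi(s_v V_{p,q})$ satisfies $T_{e_i}y=\rho(T_{e_i})y$ for all $i$, then apply Theorem~\ref{T:PF}(ii) and sum over $v$ to identify the scalar as $\phi(V_{p,q})$. The only cosmetic difference is the order in which you invoke centrality and the KMS condition when simplifying $\phi(s_\mu s_\mu^* V_{p,q})$.
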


\begin{proof}
For $v \in \Lambda^0$, define $y_v:=\phi(s_v V_{p,q})$. Let $y:=(y_v)_{v \in \Lambda^0}$. Then $y \in \ell^1(\Lambda^0)$. For any $v \in \Lambda^0,1 \leq i \leq k$, we compute that
\begin{align*}
y_v&=\phi(s_v V_{p,q})
\\&=\sum_{\lambda \in v \Lambda^{e_i}}\phi(s_\lambda s_\lambda^* V_{p,q})
\\&=\sum_{\lambda \in v \Lambda^{e_i}}\phi(s_\lambda  V_{p,q} s_\lambda^*) \text{ (as $V_{p,q}$ is central by Theorem \ref{T:Vrep})}
\\&=\sum_{\lambda \in v \Lambda^{e_i}}\rho(\Lambda)^{-e_i}\phi(s_{s(e)}  V_{p,q})
\\&=\rho(\Lambda)^{-e_i} (T_{e_i}y)(v).
\end{align*}
By Theorem~\ref{T:PF}, $y=c x_\Lambda$ for some $c \in \mathbb{C}$. Notice that
\[
c=c\sum_{v \in \Lambda^0}\phi(s_v)=\sum_{v \in \Lambda^0}\phi(s_v V_{p,q})=\phi(V_{p,q}).
\]
So $\phi(s_v V_{p,q})=x_\Lambda(v)\phi(V_{p,q})$ for all $v \in \Lambda^0$.
\end{proof}

%%%%%%%%%%%%%%%%%%
\subsection{The KMS simplex of $\O_{G,\Lambda}$}
Let $G$ be an amenable group, and $\Lambda$ be a strongly connected finite $k$-graph. Suppose that $(G,\Lambda)$ is a pseudo free and locally faithful self-similar action which satisfies the finite-state condition. In this section, we will completely describe the structure of the KMS simplex of $(G,\Lambda)$. 
%We are now ready to present the structure for KMS simplex of $\O_{G,\Lambda}$.

But a general lemma is given first. 

\begin{lem}\label{L:PmuPnu}
Let $\Lambda$ be a self-similar $k$-graph over $G$. Then, for any cycline triple $(\mu,g,\nu)$, we have $s_\mu s_\mu^*=s_\nu s_\nu^*$.
\end{lem}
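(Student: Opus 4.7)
The plan is to reduce the identity to a bijection between two sets of finite paths, after which the Cuntz--Krieger relation (CK4) will make the conclusion immediate. Set $p := d(\mu) \vee d(\nu)$, $q_\mu := p - d(\mu)$, and $q_\nu := p - d(\nu)$. Using (CK1)--(CK4), one has the expansions
\[
s_\mu s_\mu^* \;=\; \sum_{\alpha \in s(\mu)\Lambda^{q_\mu}} s_{\mu\alpha}s_{\mu\alpha}^*,
\qquad
s_\nu s_\nu^* \;=\; \sum_{\beta \in s(\nu)\Lambda^{q_\nu}} s_{\nu\beta}s_{\nu\beta}^*,
\]
obtained by writing $s_\mu s_\mu^* = s_\mu s_{s(\mu)} s_\mu^*$, expanding $s_{s(\mu)}$ via (CK4) at degree $q_\mu$, and then applying (CK2), and similarly on the $\nu$ side. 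So it will suffice to produce a bijection $\Phi \colon s(\mu)\Lambda^{q_\mu} \to s(\nu)\Lambda^{q_\nu}$ satisfying $\mu\alpha = \nu\,\Phi(\alpha)$: the two expansions will then be termwise equal because $s_{\mu\alpha}s_{\mu\alpha}^* = s_{\nu\Phi(\alpha)}s_{\nu\Phi(\alpha)}^*$.

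The bijection $\Phi$ will be constructed using the cycline condition together with the bijectivity of the extended $G$-action on infinite paths. Given $\alpha \in s(\mu)\Lambda^{q_\mu}$, I will pick any $y \in s(\alpha)\Lambda^\infty$ (available by source-freeness of $\Lambda$), so that $\alpha y \in s(\mu)\Lambda^\infty$. Since the formula in Subsection~\ref{SS:pathgrp} makes $g$ a bijection from $s(\nu)\Lambda^\infty$ onto $(g \cdot s(\nu))\Lambda^\infty = s(\mu)\Lambda^\infty$, there is a unique $x \in s(\nu)\Lambda^\infty$ with $g \cdot x = \alpha y$. The defining property $\mu(g \cdot x) = \nu x$ of the cycline triple then gives $\mu\alpha y = \nu x$; comparing initial segments of length $p$ yields $\mu\alpha = \nu \cdot x(0, q_\nu)$, and I will define $\Phi(\alpha) := x(0, q_\nu)$. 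The unique factorization property of $\Lambda$ guarantees that $\Phi(\alpha)$ is independent of the chosen extension $y$, and that $\Phi$ is injective (from $\nu\beta = \nu\beta' \Rightarrow \beta = \beta'$). Applying the same construction to the cycline triple $(\nu, g^{-1}, \mu)$ supplied by Lemma~\ref{L:equi}(i) will yield the inverse of $\Phi$.

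The only technical point requiring real care is the degree bookkeeping when passing from $\mu\alpha y = \nu x$ to $\mu\alpha = \nu \cdot x(0, q_\nu)$; this is immediate once one notes $d(\mu\alpha) = d(\mu) + q_\mu = p = d(\nu) + q_\nu$, so the length-$p$ prefixes of both sides are unambiguous. Notably, no extra hypotheses on $(G,\Lambda)$ (strong connectedness of $\Lambda$, amenability of $G$, pseudo-freeness, or local faithfulness) enter the argument, which is consistent with the generality in which the lemma is stated.
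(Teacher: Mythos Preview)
Your proof is correct and a little more direct than the paper's. The paper instead shows $(s_\mu s_\mu^*-s_\nu s_\nu^*)^2=0$: it expands the cross term via the standard identity $s_\mu s_\mu^* s_\nu s_\nu^*=\sum_{(\lambda,\omega)\in\Lambda^{\min}(\mu,\nu)} s_{\mu\lambda}s_{\mu\lambda}^*$ and then uses the cycline condition to conclude that this sum already ranges over all of $s(\mu)\Lambda^{q_\mu}$ (and symmetrically over $s(\nu)\Lambda^{q_\nu}$), so that (CK4) kills the difference. Your argument makes explicit the combinatorial fact that the paper uses implicitly at that step---namely, that $\alpha\mapsto\Phi(\alpha)$ with $\mu\alpha=\nu\Phi(\alpha)$ is a bijection $s(\mu)\Lambda^{q_\mu}\to s(\nu)\Lambda^{q_\nu}$---and then reads off $s_\mu s_\mu^*=s_\nu s_\nu^*$ termwise from the (CK4) expansions. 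Both routes rest on the same core observation; yours avoids the squaring trick and the $\Lambda^{\min}$ bookkeeping, while the paper's avoids constructing the bijection explicitly.
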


\begin{proof} Let $(\mu,g,\nu)$ be a cycline triple. Then we compute that
\begin{align*}
&(s_\mu s_\mu^*-s_\nu s_\nu^*)(s_\mu s_\mu^*-s_\nu s_\nu^*)\\
&=s_\mu s_\mu^*+s_\nu s_\nu^*-(s_\mu s_\nu^*s_\nu s_\nu^*+s_\nu s_\nu^* s_\mu s_\mu^*)\\
&=s_\mu s_\mu^*+s_\nu s_\nu^*-\sum_{(\lambda,\omega) \in \Lambda^{\min}(\mu,\nu)}(s_{\mu\lambda}s_{\mu\lambda}^*+s_{\nu\omega}s_{\nu\omega}^*)
\ (\text{by \cite{KP00}})\\
&=s_\mu s_\mu^*+s_\nu s_\nu^*-\sum_{\lambda\in s(\mu)\Lambda^{d(\mu)\lor d(\nu)-d(\mu)}}s_{\mu\lambda}s_{\mu\lambda}^*
     -\sum_{\omega \in s(\nu)\Lambda^{d(\mu)\lor d(\nu)-d(\nu)}}s_{\nu\omega}s_{\nu\omega}^*
     \\
&\quad (\text{as }(\mu, g, \nu)\in \C_{G,\Lambda})
\\&=0 \ (\text{by (CK4)}).
\end{align*}
So $s_\mu s_\mu^*=s_\nu s_\nu^*$.
\end{proof}

%In this subsection, $G$ is an amenable group, $\Lambda$ is a finite strongly connected $k$-graph, and $(G,\Lambda)$ is a  pseudo free and locally faithful self-similar action.

\begin{thm}
\label{T:kms}
Let $G$ be an amenable group, $\Lambda$ be a strongly connected finite $k$-graph, and $(G,\Lambda)$ be a pseudo free and locally faithful self-similar action which satisfies the finite-state condition.
If $\phi$ is a KMS state on $\O_{G,\Lambda}$, then for any $\mu,\nu \in \Lambda,g \in G$ with $s(\mu)=g \cdot s(\nu)$, we have
\begin{align*}
\phi(s_\mu u_g s_\nu^*)=\left\{
%\begin{matrix}
%\rho(\Lambda)^{-d(\mu)}x_\Lambda(s_{s(\mu)})\phi(V_{d(\mu),d(\nu)})  & \text{ if $(\mu,g,\nu)\in\C_{G,\Lambda}$},\\
%0 & \text{ otherwise}.
%\end{matrix}
\begin{array}{ll}
\rho(\Lambda)^{-d(\mu)}x_\Lambda(s_{s(\mu)})\phi(V_{d(\mu),d(\nu)})  & \text{ if $(\mu,g,\nu)\in\C_{G,\Lambda}$},\\
0 & \text{ otherwise}.
\end{array}
\right.
\end{align*}
\end{thm}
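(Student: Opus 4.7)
The proof splits on whether the triple $(\mu,g,\nu)$ is cycline. When $(\mu,g,\nu)\in \C_{G,\Lambda}$, identity~\eqref{E:Vmn} rewrites $s_\mu u_g s_\nu^* = s_\mu s_\mu^* V_{d(\mu),d(\nu)}$. Since $r=\ln\rho(\Lambda)$ gives $\alpha_i(s_\mu)=\rho(\Lambda)^{-d(\mu)}s_\mu$, and $V_{d(\mu),d(\nu)}$ is central by Theorem~\ref{T:Vrep}, one application of the KMS condition together with Lemma~\ref{phi(s_v V_m,n)} yields
\[
\phi(s_\mu s_\mu^*V_{d(\mu),d(\nu)}) = \rho(\Lambda)^{-d(\mu)}\phi(s_{s(\mu)}V_{d(\mu),d(\nu)}) = \rho(\Lambda)^{-d(\mu)}x_\Lambda(s(\mu))\phi(V_{d(\mu),d(\nu)}).
\]

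For the non-cycline case, the plan is to show $\phi(s_\mu u_g s_\nu^*)=0$. For any $a\in\bN^k\setminus\{0\}$ and $n\in\bN$, (CK4) gives
\[
s_\mu u_g s_\nu^* = \sum_{\lambda\in s(\nu)\Lambda^{na}}s_{\mu(g\cdot\lambda)}u_{g|_\lambda}s_{\nu\lambda}^*,
\]
and applying KMS to each summand followed by (CK4) on $s_{\nu\lambda}^*s_{\mu(g\cdot\lambda)}$ kills the summand whenever $\Lambda^{\min}(\nu\lambda,\mu(g\cdot\lambda))=\emptyset$. A Cauchy--Schwarz estimate, combined with Proposition~\ref{P:phiLambda} and the $G$-invariance~\eqref{E:KMSass} of $x_\Lambda$, bounds each remaining term by $\rho(\Lambda)^{(d(\nu)-d(\mu))/2}\phi_\Lambda(s_{\nu\lambda}s_{\nu\lambda}^*)$, reducing the matter to showing $\sum_{\lambda:\Lambda^{\min}\ne\emptyset}\phi_\Lambda(s_{\nu\lambda}s_{\nu\lambda}^*)\to 0$ as $n\to\infty$ for a suitable $a$.

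The periodicity dichotomy finishes the argument. When $d(\mu)-d(\nu)\notin \Per_{G,\Lambda}$, Theorem~\ref{T:Pergroup} gives $(d(\mu),d(\nu),G)\cap\Sigma_{G,\Lambda}=\emptyset$, and Lemma~\ref{L:aK} delivers the required $K^n$ decay immediately. When $d(\mu)-d(\nu)\in\Per_{G,\Lambda}$, Proposition~\ref{P:unicyc} produces the unique cycline $(\mu_0,g_0,\nu)$ with $d(\mu_0)=d(\mu)$, and the key structural claim I would establish is that $\Lambda^{\min}(\nu\lambda,\mu(g\cdot\lambda))\ne\emptyset$ forces both $\mu=\mu_0$ and $g\cdot\lambda=g_0\cdot\lambda$. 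Given any witnessing pair $(\rho',\tau')$, I would extend $\rho'$ to an infinite path $z\in s(\lambda)\Lambda^\infty$, truncate the cycline identity $\mu_0(g_0\cdot\lambda)(g_0|_\lambda\cdot z)=\nu\lambda z$ at degree $d(\mu)\vee d(\nu)+na$, and compare the two resulting decompositions using unique factorization in $\Lambda$ to extract the claim. Consequently, if $\mu\ne\mu_0$ the sum is empty; if $\mu=\mu_0$ then $g\ne g_0$ (by non-cyclineness), and the sum becomes $\sum_{\lambda:g\cdot\lambda=g_0\cdot\lambda}\phi_\Lambda(s_{\nu\lambda}s_{\nu\lambda}^*)$, to which Lemma~\ref{L:aKp=q} applied to the distinct pair $(g,g_0)$ supplies the $K^n$ decay.

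The step I expect to be the main obstacle is the factorization bookkeeping in the periodic subcase, in particular verifying that the truncated cycline identity produces a clean finite-path equality that collapses to $\mu=\mu_0$ and $g\cdot\lambda=g_0\cdot\lambda$ via the $k$-graph axioms, pseudo-freeness, and local faithfulness alone.
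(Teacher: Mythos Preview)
Your proposal is correct and follows essentially the paper's strategy: the cycline case and the non-periodic case ($d(\mu)-d(\nu)\notin\Per_{G,\Lambda}$) are handled exactly as in the paper. The only organizational difference lies in the periodic non-cycline subcase. The paper fixes $\mu$ and takes the unique cycline $(\mu,h,\lambda)$ with $d(\lambda)=d(\nu)$, whereas you fix $\nu$ and take $(\mu_0,g_0,\nu)$; these are dual via Proposition~\ref{P:unicyc}. More substantively, the paper avoids your factorization step entirely by invoking Lemma~\ref{L:PmuPnu}: since $(\mu_0,g_0,\nu)$ is cycline one has $s_{\mu_0}s_{\mu_0}^*=s_\nu s_\nu^*$, so when $\mu\neq\mu_0$ the KMS condition gives
\[
\phi(s_\mu u_g s_\nu^*)=\phi(s_\mu u_g s_\nu^* s_{\mu_0}s_{\mu_0}^*)=\phi(s_{\mu_0}s_{\mu_0}^*s_\mu u_g s_\nu^*)=0
\]
in one line, and the same trick kills the $g\cdot\omega\neq g_0\cdot\omega$ terms after expanding when $\mu=\mu_0$.

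That said, the factorization obstacle you flag is not serious and does not require pseudo-freeness or local faithfulness. Given $(\rho',\tau')\in\Lambda^{\min}(\nu\lambda,\mu(g\cdot\lambda))$, extend $\rho'$ to $z=\rho' y\in s(\lambda)\Lambda^\infty$ and apply the cycline identity $\nu\lambda z=\mu_0(g_0\cdot\lambda)(g_0|_\lambda\cdot z)$. Since $d(\nu\lambda\rho')=(d(\mu)\vee d(\nu))+na\geq d(\mu)+na$, truncating both sides of $\nu\lambda\rho'=\mu(g\cdot\lambda)\tau'$ and of the cycline identity at degree $d(\mu)$ gives $\mu=\mu_0$, and then truncating at degree $d(\mu)+na$ gives $\mu_0(g\cdot\lambda)=\mu_0(g_0\cdot\lambda)$, hence $g\cdot\lambda=g_0\cdot\lambda$. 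So your route works; the paper's projection trick via Lemma~\ref{L:PmuPnu} is simply a shortcut.
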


\begin{proof}
We divide the proof into two cases.

\underline{Case 1.} $(d(\mu),d(\nu),G)\cap\Sigma_{G,\Lambda}=\mt$.
By Proposition~\ref{P:phiLambda} and Lemma~\ref{L:aK},
there exist $a \in \mathbb{N}^k\setminus\{0\}$ and $0<K <1$, such that for any $n \in \mathbb{N}$, we have
\[
\mathop{\sum_{\lambda \in s(\nu)\Lambda^{na}}}_{\Lambda^{\min}(\mu(g \cdot \lambda),\nu \lambda)\neq\mt}\phi(s_{\nu\lambda} s_{\nu\lambda}^*) \le  K^n \phi(s_\nu s_\nu^*).
\]
So for any $n \in \mathbb{N}$, we calculate that
\begin{align*}
\vert\phi(s_\mu u_g s_\nu^*)\vert
&\leq \sum_{\lambda\in s(\nu)\Lambda^{na}}\vert \phi(s_\mu u_g s_\lambda s_\lambda^* s_\nu^*)\vert\\
&=\sum_{\lambda\in s(\nu)\Lambda^{na}}\vert \phi(s_{\mu(g \cdot \lambda)} u_{g\vert_\lambda} s_{\nu\lambda}^*)\vert\\
&=\mathop{\sum_{\lambda \in s(\nu)\Lambda^{na}}}_{\Lambda^{\min}(\mu(g \cdot \lambda),\nu \lambda)\neq\mt}\vert \phi(s_{\mu(g \cdot \lambda)} 
     u_{g\vert_\lambda} s_{\nu\lambda}^*)\vert
\ (\text{by the KMS condition})\\
&\leq\mathop{\sum_{\lambda \in s(\nu)\Lambda^{na}}}_{\Lambda^{\min}(\mu(g \cdot \lambda),\nu \lambda)\neq\mt}\sqrt{\phi(s_{\nu\lambda}s_{\nu\lambda}^*)\phi(s_{\mu(g \cdot \lambda)}s_{\mu(g \cdot \lambda)}^*)}\\
&\quad \text{(by the Cauchy-Schwarz inequality)}
\\&=\sqrt{\rho(\Lambda)^{d(\nu)-d(\mu)}}\mathop{\sum_{\lambda \in s(\nu)\Lambda^{na}}}_{\Lambda^{\min}(\mu(g \cdot \lambda),\nu \lambda)\neq\mt}\phi(s_{\nu\lambda}s_{\nu\lambda}^*)\\
&\quad \text{(by Theorem~\ref{T:PF} and Proposition \ref{P:phiLambda})}\\
&\le K^n \phi(s_\nu s_\nu^*).
\end{align*}
So $\phi(s_\mu u_g s_\nu^*)=0$.

\underline{Case 2.} $(d(\mu),d(\nu),G)\cap\Sigma_{G,\Lambda}\ne \mt$. That is, there are $h'\in G$ and $v\in\Lambda^0$ such that $(d(\mu),d(\nu),h')\in\Per_{G,\Lambda}^v$.
By Proposition~\ref{P:unicyc}, there exist $h \in G$ and $\lambda \in \Lambda^{d(\nu)}$ such that $(\mu,h,\lambda)$ is a cycline triple. We split into three subcases.

\textit{Subcase 1.} $\nu \neq \lambda$. Then by Lemma~\ref{L:PmuPnu}, we have
\[
\phi(s_\mu u_g s_\nu^*)=\phi(s_\mu s_\mu^* s_\mu u_g s_\nu^*)=\phi(s_\lambda s_\lambda^* s_\mu u_g s_\nu^*)=\phi(s_\mu u_g s_\nu^*s_\lambda s_\lambda^*)=0,
\]
where the 3rd ``=" follows from the KMS condition.

\textit{Subcase 2.} $\nu=\lambda$ and $g \neq h$. By Lemma~\ref{L:aKp=q}, there exist $a \in \mathbb{N}^k\setminus\{0\}$ and $0<K <1$, such that for any $n \in \mathbb{N}$, we have
\[
\mathop{\sum_{\omega \in s(\nu)\Lambda^{na}}}_{g \cdot \omega =h \cdot \omega}\phi(s_{\nu\omega} s_{\nu\omega}^*) \le K^n \phi(s_\nu s_\nu^*).
\]
Then for any $n \in \mathbb{N}$, we calculate that
\begin{align*}
&\vert\phi(s_\mu u_g s_\nu^*)\vert\\
&\leq\sum_{\omega \in s(\nu)\Lambda^{na}}\vert \phi(s_{\mu (g \cdot \omega)} u_{g\vert_\omega} s_{\nu\omega}^*) \vert\\
&=\mathop{\sum_{\omega \in s(\nu)\Lambda^{na}}}_{g \cdot \omega=h \cdot \omega}\vert \phi(s_{\mu (g \cdot \omega)} u_{g\vert_\omega} s_{\nu\omega}^*) \vert+
 \mathop{\sum_{\omega \in s(\nu)\Lambda^{na}}}_{g \cdot \omega \neq h \cdot \omega}\vert \phi(s_{\mu (g \cdot \omega)} u_{g\vert_\omega} s_{\nu\omega}^*) \vert\\
&=\mathop{\sum_{\omega \in s(\nu)\Lambda^{na}}}_{g \cdot \omega=h \cdot \omega}\vert \phi(s_{\mu (g \cdot \omega)} u_{g\vert_\omega} s_{\nu\omega}^*) \vert
\ \text{(by an argument similar to Subcase 1)}
\\&\leq \mathop{\sum_{\omega \in s(\nu)\Lambda^{na}}}_{g \cdot \omega=h \cdot \omega}\sqrt{\phi(s_{\nu\omega}s_{\nu\omega}^*)
     \phi(s_{\mu(g \cdot \omega)}s_{\mu(g \cdot \omega)}^*)}
\ \text{(by the Cauchy-Schwarz inequality)}
\\&=\mathop{\sum_{\omega \in s(\nu)\Lambda^{na}}}_{g \cdot \omega=h \cdot \omega}\sqrt{\rho(\Lambda)^{d(\nu)-d(\mu)}}\phi(s_{\nu\omega}s_{\nu\omega}^*)
\ \text{(by Theorem~\ref{T:PF} and Proposition~\ref{P:phiLambda})}
\\&\le \sqrt{\rho(\Lambda)^{d(\nu)-d(\mu)}} K^n \phi(s_\nu s_\nu^*).
\end{align*}
So $\phi(s_\mu u_g s_\nu^*)=0$.

\textit{Subcase 3.} $\nu=\lambda$ and $g=h$. Then $(\mu,g,\nu)$ is a cycline triple.
%One can easily check that
%\[
%s_\mu u_g s_\nu^*= s_\mu s_\mu^* V_{d(\mu),d(\nu)}=V_{d(\mu),d(\nu)}s_\nu s_\nu^*.
%\]
Thus we have
\begin{align*}
\phi(s_\mu u_g s_\nu^*)&=\phi(s_\mu s_\mu^* V_{d(\mu),d(\nu)})\ (\text{by }\eqref{E:Vmn})\\
&=\rho(\Lambda)^{-d(\mu)}\phi( s_\mu^* V_{d(\mu),d(\nu)} s_\mu)\ (\text{by the KMS condition})\\
&=\rho(\Lambda)^{-d(\mu)}\phi( s_\mu^* s_\mu V_{d(\mu),d(\nu)})\ (\text{by Theorem~\ref{T:Vrep}})\\
&=\rho(\Lambda)^{-d(\mu)}\phi(s_{s(\mu)}V_{d(\mu),d(\nu)})\\
&=\rho(\Lambda)^{-d(\mu)} x_\Lambda(s(\mu))\phi(V_{d(\mu),d(\nu)}) \ (\text{by Lemma \ref{phi(s_v V_m,n)}}).
\end{align*}
%by Theorem \ref{T:tensor} and Proposition \ref{P:phiLambda}.
This ends the proof.
\end{proof}

%%%%%%%%%%%%%%%%%%%%%%%%%

\begin{thm}\label{T:Mstate}
%Let $G$ be an amenable group, $\Lambda$ be a strongly connected finite $k$-graph, and $(G,\Lambda)$ be a pseudo free and locally faithful self-similar action which satisfies the %finite-state condition. 
Keep the same conditions as in Theorem \ref{T:kms}. Then the KMS simplex of $\mathcal{O}_{G,\Lambda}$ is affinely isomorphic to the convex compact subset $\S$ of the tracial state space of $\mathcal{M}_{G,\Lambda}$
 satisfying
\begin{align*}
\phi(s_\mu u_g s_\nu^*)=\rho(\Lambda)^{-d(\mu)}x_\Lambda(s(\mu))\phi(V_{d(\mu),d(\nu)})\qforal (\mu,g,\nu) \in \C_{G,\Lambda}.
\end{align*}
\end{thm}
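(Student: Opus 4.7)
The plan is to establish the desired affine isomorphism by constructing mutually inverse weak-$*$ continuous affine maps between the KMS simplex of $\O_{G,\Lambda}$ and $\S$. Define the restriction map $R(\phi) := \phi|_{\M_{G,\Lambda}}$. Since by Theorem \ref{T:tensor} the self-similar cycline subalgebra $\M_{G,\Lambda} \cong \ca(\Per_{G,\Lambda}) \otimes \D_\Lambda$ is commutative, $R(\phi)$ is automatically a tracial state on $\M_{G,\Lambda}$. The compatibility condition defining $\S$ is exactly the cycline case of Theorem \ref{T:kms}, so $R(\phi) \in \S$. Injectivity of $R$ is immediate: by Theorem \ref{T:kms} and the density of $\operatorname{span}\{s_\mu u_g s_\nu^*\}$ in $\O_{G,\Lambda}$ (Proposition \ref{P:genO}), every KMS state is determined by its values on cycline triples, hence by its restriction to $\M_{G,\Lambda}$.

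For the inverse, I would use the faithful conditional expectation $E : \O_{G,\Lambda} \to \M_{G,\Lambda}$ from Theorem \ref{T:Exp}(ii) and define $T(\psi) := \psi \circ E$. This is manifestly a state. Directly from the formula for $E$,
\[
T(\psi)(s_\mu u_g s_\nu^*) = \begin{cases} \rho(\Lambda)^{-d(\mu)}\, x_\Lambda(s(\mu))\, \psi(V_{d(\mu), d(\nu)}) & \text{if } (\mu, g, \nu) \in \C_{G,\Lambda}, \\ 0 & \text{otherwise,} \end{cases}
\]
which matches the formula in Theorem \ref{T:kms}. The identity $R \circ T = \mathrm{id}_\S$ is immediate since $E|_{\M_{G,\Lambda}} = \mathrm{id}$; once $T(\psi)$ is shown to be a KMS state, Theorem \ref{T:kms} yields $T \circ R = \mathrm{id}$ by agreement on the dense spanning set.

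The main obstacle is verifying that $\phi_\psi := T(\psi)$ satisfies the KMS$_1$ condition for $\alpha^{\ln \rho(\Lambda)}$. Since each $s_\mu u_g s_\nu^*$ is analytic with $\alpha_i(s_\mu u_g s_\nu^*) = \rho(\Lambda)^{-(d(\mu)-d(\nu))}\, s_\mu u_g s_\nu^*$, it suffices to prove
\[
\phi_\psi(ab) = \rho(\Lambda)^{-(d(\mu)-d(\nu))}\, \phi_\psi(ba)
\]
for all $a = s_\mu u_g s_\nu^*$ and $b = s_\alpha u_h s_\beta^*$ in the spanning set. I would expand both $ab$ and $ba$ using (CK1)--(CK4), the factorization $s_\nu^* s_\alpha = \sum_{(\xi,\eta) \in \Lambda^{\min}(\nu, \alpha)} s_\xi s_\eta^*$, and the self-similar commutation $u_g s_\lambda = s_{g\cdot \lambda} u_{g|_\lambda}$, then apply $\phi_\psi$ termwise via the formula above. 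The surviving (cycline) contributions on each side will be matched using Lemma \ref{L:PmuPnu}, the identity \eqref{E:Vmn}, the multiplication rule $V_{p,q} V_{m,n} = V_{p+m, q+n}$ and centrality of $V_{p,q}$ from Theorem \ref{T:Vrep}, together with the multiplicative behavior of $\psi$ on the commutative algebra $\M_{G,\Lambda}$ and the Perron-Frobenius normalization. The remaining assertions --- affineness, weak-$*$ continuity, and convex compactness of $\S$ --- are routine, since the compatibility condition defining $\S$ is a closed linear constraint in the state space of $\M_{G,\Lambda}$.
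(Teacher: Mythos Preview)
Your approach is essentially the paper's: restrict KMS states to $\M_{G,\Lambda}$ for one direction, precompose $\psi\in\S$ with the expectation $E$ for the inverse, then verify the KMS identity on spanning elements by a case analysis on whether the resulting triples are cycline. One caution: states on a commutative C*-algebra are \emph{not} multiplicative, and no such property is needed here --- the matching of cycline terms runs directly off the defining identity of $\S$, the relation $V_{p+\ell,q+\ell}=V_{p,q}$ from Theorem~\ref{T:Vrep}, and the $G$-invariance of $x_\Lambda$.
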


\begin{proof}
It is clear that $\S$ is a convex compact subset of the tracial state space of $\mathcal{M}_{G,\Lambda}$.

By Theorem~\ref{T:kms} one can define a mapping $\Phi$ from KMS simplex of $\O_{G,\Lambda}$ to $\S$ by $\Phi(\phi):=\phi \vert_{\mathcal{M}_{G,\Lambda}}$.
Also, Theorem~\ref{T:kms} implies that $\Phi$ is injective. It is enough to show that $\Phi$ is surjective. Fix $\phi \in \S$.
By Theorem \ref{T:Exp}, there exists a faithful expectation $E:\mathcal{O}_{G,\Lambda} \to \mathcal{M}_{G,\Lambda}$ such that $E(s_\mu u_g s_\nu^*)=0$
if $(\mu,g,\nu)\not\in\C_{G,\Lambda}$. We show that $\phi \circ E$ is a KMS state of $\mathcal{O}_{G,\Lambda}$. Fix $\mu,\nu,\alpha,\beta \in \Lambda, g,h \in G$ with $s(\mu)=g \cdot s(\nu)$ and $s(\alpha)=h \cdot s(\beta)$. We must show that
\begin{align}
\label{E:phiE}
(\phi \circ E)(s_\mu u_g s_\nu^* s_\alpha u_h s_\beta^*)=\rho(\Lambda)^{d(\nu)-d(\mu)}(\phi \circ E)(s_\alpha u_h s_\beta^* s_\mu u_g s_\nu^* ).
\end{align}

 If $d(\mu)-d(\nu)+d(\alpha)-d(\beta) \notin \Per_{G,\Lambda}$, then by Theorem~\ref{T:Pergroup}, we deduce that
\[
(\phi \circ E)(s_\mu u_g s_\nu^* s_\alpha u_h s_\beta^*)=(\phi \circ E)(s_\alpha u_h s_\beta^* s_\mu u_g s_\nu^* )=0.
\]

If $d(\mu)-d(\nu)+d(\alpha)-d(\beta) \in \Per_{G,\Lambda}$, we may also assume that $d(\alpha) \geq d(\nu)$ and $d(\beta)\ge d(\mu)$ by the Cuntz-Krieger relations. 
Write $\alpha=\nu' \lambda$ and $\beta=\mu' \omega$ such that $d(\mu)=d(\mu')$ and $d(\nu)=d(\nu')$. We split into four cases.

\underline{Case 1.} $\nu=\nu'$ and $\mu=\mu'$. Then
\[
s_\mu u_g s_\nu^* s_\alpha u_h s_\beta^*=s_{\mu(g \cdot \lambda)}u_{g\vert_\lambda h}s_{\mu\omega}^*, s_\alpha u_h s_\beta^* s_\mu u_g s_\nu^*=s_{\nu \lambda}u_{h (g\vert_{g^{-1} \cdot \omega})}s_{\nu(g^{-1} \cdot \omega)}^*.
\]
Observe that 
$(\mu(g \cdot \lambda),g\vert_\lambda h,\mu\omega)$ is a cycline triple if and only if so is $(\nu \lambda,h$ $g\vert_{g^{-1} \cdot \omega},\nu(g^{-1} \cdot \omega))$. 
If $(\mu(g \cdot \lambda),g\vert_\lambda h,\mu\omega)$ is not a cycline triple, then
\[
(\phi \circ E)(s_\mu u_g s_\nu^* s_\alpha u_h s_\beta^*)=(\phi \circ E)(s_\alpha u_h s_\beta^* s_\mu u_g s_\nu^* )=0.
\]
If $(\mu(g \cdot \lambda),g\vert_\lambda h,\mu\omega)$ is cycline, then
\begin{align*}
&(\phi \circ E)(s_\mu u_g s_\nu^* s_\alpha u_h s_\beta^*)
  =\phi(s_{\mu(g \cdot \lambda)}u_{g\vert_\lambda h}s_{\mu\omega}^*)
\\&=\rho(\Lambda)^{-d(\mu)-d(\lambda)}\phi(s_{s(g \cdot \lambda)}V_{d(\lambda),d(\omega)})
\ \text{(By Theorem~\ref{T:Vrep})}
\\&=\rho(\Lambda)^{-(d(\mu)-d(\nu))}\rho(\Lambda)^{-d(\nu)-d(\lambda)}\phi(s_{s(\lambda)}V_{d(\lambda),d(\omega)})
\\&=\rho(\Lambda)^{-(d(\mu)-d(\nu))}\phi(s_{\nu \lambda}u_{h (g\vert_{g^{-1} \cdot \omega})}s_{\nu(g^{-1} \cdot \omega)}^*)
\ \text{(By Theorem~\ref{T:Vrep})}
\\&=\rho(\Lambda)^{-(d(\mu)-d(\nu))}(\phi \circ E)(s_\alpha u_h s_\beta^* s_\mu u_g s_\nu^* ).
\end{align*}

\underline{Case 2.} $\nu=\nu'$ and $\mu \neq \mu'$. Then
\begin{align*}
(\phi \circ E)(s_\mu u_g s_\nu^* s_\alpha u_h s_\beta^*)=(\phi\circ E)(s_{\mu(g \cdot \lambda)}u_{g\vert_\lambda h}s_{\mu'\omega}^*)=0
\end{align*}
because $(\mu(g \cdot \lambda),g\vert_\lambda h,\mu'\omega)$ is not a cycline triple. On the other hand, $(\phi \circ E)(s_\alpha u_h s_\beta^* s_\mu u_g s_\nu^* )=0$.

\underline{Case 3.} $\nu \neq \nu'$ and $\mu=\mu'$. Similar to Case 2, we have
\[
(\phi \circ E)(s_\mu u_g s_\nu^* s_\alpha u_h s_\beta^*)=(\phi \circ E)(s_\alpha u_h s_\beta^* s_\mu u_g s_\nu^* )=0.
\]

\underline{Case 4.} $\nu \neq \nu'$ and $\mu \neq \mu'$. Then
\[
(\phi \circ E)(s_\mu u_g s_\nu^* s_\alpha u_h s_\beta^*)=(\phi \circ E)(s_\alpha u_h s_\beta^* s_\mu u_g s_\nu^* )=0.
\]

Therefore, $\phi \circ E$ satisfies \eqref{E:phiE}, and so it is a KMS state of $\mathcal{O}_{G,\Lambda}$. Clearly $(\phi \circ E)|_{\M_{G,\Lambda}}=\phi$.
Hence $\Phi$ is an affine isomorphism.
\end{proof}

\begin{thm}
\label{T:KMSPer}
%Let $G$ be an amenable group, $\Lambda$ be a strongly connected finite $k$-graph, 
%and $(G,\Lambda)$ be a pseudo free locally faithful self-similar action which satisfies the finite-state condition. 
Keep the same conditions as in Theorem \ref{T:kms}.
Then the KMS simplex of $\mathcal{O}_{G,\Lambda}$ is affinely isomorphic to the tracial state space of $\ca(\Per_{G,\Lambda})$.
\end{thm}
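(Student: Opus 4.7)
The strategy is to combine the two main structural results already established, Theorems~\ref{T:Mstate} and \ref{T:tensor}, and identify the convex set $\S$ appearing in Theorem~\ref{T:Mstate} with the tracial state space $T(\ca(\Per_{G,\Lambda}))$. By Theorem~\ref{T:Mstate}, the KMS simplex of $\O_{G,\Lambda}$ is already affinely isomorphic to $\S$, so it suffices to exhibit an affine homeomorphism $\Psi\colon T(\ca(\Per_{G,\Lambda}))\to\S$.

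First, I will define $\Psi$ in the forward direction. Using the identification $\M_{G,\Lambda}\cong\ca(\Per_{G,\Lambda})\otimes \D_\Lambda$ from Theorem~\ref{T:tensor}, and noting that $\D_\Lambda$ is commutative (so that the Perron--Frobenius state $\phi_\Lambda$ of Theorem~\ref{T:PF} is automatically tracial), I will set $\Psi(\tau):=\tau\otimes\phi_\Lambda$. This is a tracial state on $\M_{G,\Lambda}$; to verify it lies in $\S$, I will apply the identity $s_\mu u_g s_\nu^{*}=s_\mu s_\mu^{*}V_{d(\mu),d(\nu)}$ from \eqref{E:Vmn} together with Theorem~\ref{T:PF}(iv) to compute, for each cycline triple $(\mu,g,\nu)$,
\[
(\tau\otimes\phi_\Lambda)(s_\mu u_g s_\nu^{*})=\phi_\Lambda(s_\mu s_\mu^{*})\,\tau(V_{d(\mu),d(\nu)})=\rho(\Lambda)^{-d(\mu)}x_\Lambda(s(\mu))\,\tau(V_{d(\mu),d(\nu)}),
\]
which is exactly the relation required for membership in $\S$.

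Next, I will construct the inverse $\Phi\colon\S\to T(\ca(\Per_{G,\Lambda}))$ by restriction: $\Phi(\phi):=\phi|_{\ca(\Per_{G,\Lambda})}$. That $\Phi\circ\Psi=\mathrm{id}$ is immediate. The substantive point --- and the main obstacle --- is showing $\Psi\circ\Phi=\mathrm{id}$, i.e.\ that every $\phi\in\S$ factors as $(\phi|_{\ca(\Per_{G,\Lambda})})\otimes\phi_\Lambda$ under the isomorphism of Theorem~\ref{T:tensor}. Since $\M_{G,\Lambda}$ is generated by elements of the form $V_{p,q}s_\mu s_\mu^{*}$ (with $p-q\in\Per_{G,\Lambda}$), by linearity and continuity it suffices to prove the multiplicativity
\[
\phi(V_{p,q}s_\mu s_\mu^{*})=\phi(V_{p,q})\,\phi_\Lambda(s_\mu s_\mu^{*})
\]
on such generators. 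For this I will extend $\phi$ to the KMS state $\phi\circ E$ on $\O_{G,\Lambda}$ (with $E$ the faithful conditional expectation from Theorem~\ref{T:Exp}, whose existence relies on the Cartan structure of $\M_{G,\Lambda}$), and exploit three ingredients simultaneously: the centrality of $V_{p,q}$ (Theorem~\ref{T:Vrep}), the KMS condition applied with $\alpha_{i}(s_\mu^{*})=\rho(\Lambda)^{d(\mu)}s_\mu^{*}$, and Lemma~\ref{phi(s_v V_m,n)}. Concretely,
\begin{align*}
\phi(V_{p,q}s_\mu s_\mu^{*})
&=\rho(\Lambda)^{-d(\mu)}(\phi\circ E)(s_\mu^{*}V_{p,q}s_\mu)
 =\rho(\Lambda)^{-d(\mu)}\phi(V_{p,q}s_{s(\mu)})\\
&=\rho(\Lambda)^{-d(\mu)}x_\Lambda(s(\mu))\,\phi(V_{p,q})
 =\phi_\Lambda(s_\mu s_\mu^{*})\,\phi(V_{p,q}),
\end{align*}
which gives the desired factorization.

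Finally, both $\Psi$ and $\Phi$ are manifestly affine and weak-$*$ continuous, so $\Psi$ is an affine homeomorphism onto $\S$. Composing with the affine isomorphism from Theorem~\ref{T:Mstate} yields the stated identification of the KMS simplex of $\O_{G,\Lambda}$ with $T(\ca(\Per_{G,\Lambda}))$. The only nontrivial step is the factorization argument in the previous paragraph; everything else is bookkeeping around the two theorems already proved.
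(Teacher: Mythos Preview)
Your proposal is correct and follows essentially the same route as the paper: reduce to identifying $\S$ with $T(\ca(\Per_{G,\Lambda}))$ via Theorems~\ref{T:embed} and~\ref{T:Mstate}, use $\tau\mapsto\tau\otimes\phi_\Lambda$ (through Theorem~\ref{T:tensor}) for one direction, and restriction to $\fA\cong\ca(\Per_{G,\Lambda})$ for the other. The only noteworthy difference is that the paper dispatches injectivity of the restriction map in one line (``by the definition of $\S$''), relying implicitly on the fact that the linear span of $\{s_\mu u_g s_\nu^*:(\mu,g,\nu)\in\C_{G,\Lambda}\}$ is already a dense $*$-subalgebra of $\M_{G,\Lambda}$; you instead prove the stronger statement $\phi=(\phi|_{\fA})\otimes\phi_\Lambda$ explicitly by passing to the KMS extension $\phi\circ E$ and invoking Lemma~\ref{phi(s_v V_m,n)}. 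Both are fine, and your version has the virtue of making the factorization completely transparent, at the cost of appealing to the full proof (not just the statement) of Theorem~\ref{T:Mstate} to know that $\phi\circ E$ is KMS.
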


\begin{proof}
Let $\fA:=\ca(\{V_{m,n}: m, n\in\bN^k, m-n\in\Per_{G,\Lambda}\})$.
By Theorems \ref{T:embed} and \ref{T:Mstate}, it suffices to show that the subset $\S$ of the tracial state space of $\mathcal{M}_{G,\Lambda}$ in Theorem~\ref{T:Mstate} is affinely isomorphic to the tracial state space of $\fA$.

%Due to Theorem \ref{T:embed}, we identify $\fA$ with $\ca(\{V_{m,n}: m, n\in\bN^k, m-n\in\Per_{G,\Lambda}\})$.
Define $\Psi:\S \to \{\text{Tracial states of $\fA$}\}$ by $\Psi(\phi):=\phi \vert_\fA$.
By the definition of $\S$, $\Psi$ is injective. We only need to show that $\Psi$ is surjective. Fix a tracial state $\phi$ of $\fA$.
Recall from Theorem~\ref{T:PF} that $\phi_\Lambda$ is the Perron-Frobenius state on $\D_\Lambda$. So  $\phi \otimes \phi_\Lambda$
is the unique tracial state on
$\fA \otimes \D_\Lambda$ such that $(\phi \otimes \phi_\Lambda)(a \otimes b)=\phi(a)\phi_\Lambda(b)$ for all
$a \in \fA, b \in \D_\Lambda$. By Theorems~\ref{T:embed} and \ref{T:tensor}, $\phi \otimes \phi_\Lambda$ can be regarded as a tracial state on $\mathcal{M}_{G,\Lambda}$. It is straightforward to see that $\phi \otimes \phi_\Lambda \in \S$ and $(\phi \otimes \phi_\Lambda) \vert_{\fA}=\phi$. So $\Psi$ is injective. Hence $\Psi$ is an affine isomorphism.
\end{proof}

\subsection{Two applications}

Two applications will given in this short subsection.

The first one generalizes \cite[Remark 7.2]{HLRS15}. % where $G$ is trivial.

\begin{cor}
\label{C:Per}
$\Per_{G,\Lambda}$ is a subgroup of $\{n\in \bZ^k:\rho(\Lambda)^n=1\}$.
\end{cor}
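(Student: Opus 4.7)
The plan is to exploit Lemma \ref{L:PmuPnu}, which says that for every cycline triple $(\mu,g,\nu)$ the source projections coincide: $s_\mu s_\mu^* = s_\nu s_\nu^*$. Pairing this identity with the Perron--Frobenius state $\phi_\Lambda$ on $\D_\Lambda$ turns a cycline triple of degrees $(p,q)$ with $p-q=n$ into the scalar equation
\[
\rho(\Lambda)^{-p}\, x_\Lambda(s(\mu)) \;=\; \rho(\Lambda)^{-q}\, x_\Lambda(s(\nu)),
\]
equivalently $x_\Lambda(s(\mu)) = \rho(\Lambda)^n\, x_\Lambda(s(\nu))$. Iterating this along a sequence of cycline triples, while $\Lambda^0$ is finite and $x_\Lambda$ is strictly positive, should force $\rho(\Lambda)^n=1$.

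By Theorem \ref{T:Pergroup}, $\Per_{G,\Lambda}$ is already a subgroup of $\mathbb{Z}^k$, so it suffices to show $\rho(\Lambda)^n = 1$ for every $n \in \Per_{G,\Lambda}$. Fix such an $n$ and any decomposition $n = p - q$ with $p,q \in \mathbb{N}^k$ (say $p = n \vee 0$, $q = (-n) \vee 0$). The hypothesis of Proposition \ref{P:unicyc} is satisfied for this $(p,q)$ by Theorem \ref{T:Pergroup} together with Lemma \ref{L:1toall}, so for every $\nu' \in \Lambda^{q}$ there are unique $h' \in G$ and $\mu' \in \Lambda^{p}$ with $(\mu',h',\nu')$ cycline.

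I then build a sequence $(v_i)_{i\ge 0} \subseteq \Lambda^0$ as follows. Pick $v_0 \in \Lambda^0$ arbitrarily. Given $v_i$, choose any $\nu_{i+1} \in \Lambda^q v_i$ (such a path exists because strongly connected $k$-graphs are sink-free, \cite[Lemma~2.1]{HLRS15}), let $(\mu_{i+1},g_{i+1},\nu_{i+1})$ be the unique cycline triple furnished by Proposition \ref{P:unicyc} with $d(\mu_{i+1}) = p$, and set $v_{i+1}:= s(\mu_{i+1})$. Applying $\phi_\Lambda$ to $s_{\mu_{i+1}}s_{\mu_{i+1}}^* = s_{\nu_{i+1}}s_{\nu_{i+1}}^*$ and using Theorem \ref{T:PF}\,(iv) gives $x_\Lambda(v_{i+1}) = \rho(\Lambda)^n\, x_\Lambda(v_i)$, and inductively $x_\Lambda(v_i) = \bigl(\rho(\Lambda)^n\bigr)^i x_\Lambda(v_0)$ for every $i \geq 0$.

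Since $\Lambda^0$ is finite, pigeonhole provides indices $0 \leq i < j$ with $v_i = v_j$; equating the values of $x_\Lambda$ and using $x_\Lambda(v_0) > 0$ yields $\bigl(\rho(\Lambda)^n\bigr)^{j-i} = 1$. As $\rho(\Lambda)^n$ is a positive real number, this forces $\rho(\Lambda)^n = 1$, completing the inclusion $\Per_{G,\Lambda} \subseteq \{m \in \mathbb{Z}^k : \rho(\Lambda)^m = 1\}$. There is no serious obstacle in this argument: the only step requiring a little care is ensuring the recursion is well-defined, which is handled by Proposition \ref{P:unicyc} together with sink-freeness of strongly connected $k$-graphs.
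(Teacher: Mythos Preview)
Your proof is correct and takes a genuinely different route from the paper's.

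The paper's argument invokes the existence of a KMS state with $\phi(V_{d(\mu),d(\nu)})\neq 0$ (via Theorems~\ref{T:embed} and~\ref{T:KMSPer}) and then compares the two expressions for $\phi(s_\mu u_g s_\nu^*)$ and $\overline{\phi(s_\nu u_{g^{-1}} s_\mu^*)}$ given by Theorem~\ref{T:kms}. Your argument instead stays entirely inside the diagonal $\D_\Lambda$: Lemma~\ref{L:PmuPnu} plus the Perron--Frobenius state give $x_\Lambda(s(\mu))=\rho(\Lambda)^{n}x_\Lambda(s(\nu))$ for any cycline triple of shape $(p,q)$, and your iteration via Proposition~\ref{P:unicyc} then forces $(\rho(\Lambda)^n)^i$ to range inside the finite positive set $\{x_\Lambda(v)/x_\Lambda(v_0):v\in\Lambda^0\}$, whence $\rho(\Lambda)^n=1$.

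What your approach buys is a strictly weaker hypothesis list: you use only strong connectedness, finiteness of $\Lambda$, and local faithfulness (through Theorem~\ref{T:Pergroup} and Proposition~\ref{P:unicyc}). You need neither pseudo-freeness, the finite-state condition, amenability of $G$, nor the standing assumption~\eqref{E:KMSass} that $x_\Lambda$ is $G$-invariant on $\Lambda^0$; in particular your argument still works when the KMS simplex is empty, where the paper's proof would not apply. The paper's proof, by contrast, is a nice illustration of the KMS machinery as a computational tool once it has been set up.
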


\begin{proof}
It is obvious that $\{n\in \bZ^k:\rho(\Lambda)^n=1\}$ is a subgroup of $\bZ^k$.
By Theorem \ref{T:Pergroup}, $\Per_{G,\Lambda}$ is a group. 
In what follows, we show that 
%So it remains to show that, 
for $(\mu,g,\nu)\in\C_{G,\Lambda}$, one has $\rho(\Lambda)^{d(\mu)}=\rho(\Lambda)^{d(\nu)}$.
By Theorems \ref{T:embed} and \ref{T:KMSPer}, there exists a KMS state $\phi$ on $\mathcal{O}_{G,\Lambda}$ such that $\phi(V_{d(\mu),d(\nu)}) \neq 0$. 
So
\begin{align*}
&\rho(\Lambda)^{-d(\mu)}x_\Lambda(s(\mu))\phi(V_{d(\mu),d(\nu)})\\
&=\phi(s_\mu u_g s_\nu^*)\ \text{(By Theorem~\ref{T:kms})}
\\&=\overline{\phi(s_\nu u_{g^{-1}} s_\mu^*)} \ \text{(by Lemma \ref{L:equi})}
\\&=\rho(\Lambda)^{-d(\nu)}x_\Lambda(s(\nu))\overline{\phi(V_{d(\nu),d(\mu)})}\ \text{(By Theorem~\ref{T:kms})}
\\&=\rho(\Lambda)^{-d(\nu)}x_\Lambda(s(\mu))\overline{\phi(V_{d(\mu),d(\nu)}^*)}\ \text{(By Theorem~\ref{T:Vrep})}
\\&=\rho(\Lambda)^{-d(\nu)}x_\Lambda(s(\mu))\phi(V_{d(\mu),d(\nu)}).
\end{align*}
Hence $\rho(\Lambda)^{d(\mu)}=\rho(\Lambda)^{d(\nu)}$, and we are done.
\end{proof}

In general, $\Per_{G,\Lambda}$ is a proper subgroup of $\{n\in \bZ^k:\rho(\Lambda)^n=1\}$ even for $G$-periodic $\Lambda$ (see, e.g., \cite{DY091}).

The second application contributes one more item on the list of characterizing the $G$-aperiodicity of $(G,\Lambda)$, which
generalizes the corresponding parts \cite[Theorem 11.1]{HLRS15}.

\begin{cor}
\label{C:uniKMS}
The following are equivalent:
\begin{enumerate}
\item
$(G,\Lambda)$ is $G$-aperiodic;
\item
$\mathcal{O}_{G,\Lambda}$ is simple;
\item
$\Per_{G,\Lambda}=\{0\}$;
\item
there exists a unique KMS state on $\mathcal{O}_{G,\Lambda}$.
\end{enumerate}
\end{cor}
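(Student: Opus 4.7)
The strategy is to establish the chain (i) $\Leftrightarrow$ (iii) $\Leftrightarrow$ (iv) directly from earlier results, and then to close the circle via (ii) $\Leftrightarrow$ (iii). The equivalence (i) $\Leftrightarrow$ (iii) is exactly Theorem \ref{T:chaGape1}. For (iii) $\Leftrightarrow$ (iv), Theorem \ref{T:KMSPer} will identify the KMS simplex of $\mathcal{O}_{G,\Lambda}$ with the tracial state space of $\ca(\Per_{G,\Lambda})$; since $\Per_{G,\Lambda}$ is an abelian subgroup of $\bZ^k$ by Theorem \ref{T:Pergroup}, this C*-algebra is commutative and isomorphic to $C(\widehat{\Per_{G,\Lambda}})$, whose tracial state space coincides with its full state space, i.e., the probability measures on the compact dual. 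That space is a singleton exactly when $\Per_{G,\Lambda}=\{0\}$, giving the equivalence.

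For (ii) $\Rightarrow$ (iii), the plan is to invoke the standard fact that the center of a simple unital C*-algebra is $\bC \cdot 1$. By Theorems \ref{T:Vrep} and \ref{T:embed}, the C*-subalgebra of $\mathcal{O}_{G,\Lambda}$ generated by the periodicity unitaries $\{V_{m,n}\}$ is central and isomorphic to $\ca(\Per_{G,\Lambda})$. Simplicity of $\mathcal{O}_{G,\Lambda}$ would force this central subalgebra to be $\bC$, and hence $\Per_{G,\Lambda}=\{0\}$.

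For (iii) $\Rightarrow$ (ii), the plan is to pass to the groupoid model $\mathcal{O}_{G,\Lambda} \cong \ca(\mathcal{G}_{G,\Lambda})$ (valid since $G$ is amenable) and to verify the standard hypotheses for simplicity of an \'etale groupoid C*-algebra. Effectiveness of $\mathcal{G}_{G,\Lambda}$ will follow from Lemma \ref{L:Isoint} combined with Theorem \ref{T:chaGape1}: if all cycline triples are trivial, then $\Iso(\mathcal{G}_{G,\Lambda})^{\mathrm{o}}$ coincides with the unit space. Minimality will come from strong connectedness of $\Lambda$: given any $x \in \Lambda^\infty$ and any basic cylinder determined by $\mu' \in \Lambda$, factoring $x = \nu x'$ with $\nu = x(0, d(\mu'))$ and choosing $\tau \in s(\mu')\Lambda s(\nu)$ by strong connectedness exhibits $(\mu'\tau) x'$ as a point in the orbit of $x$ lying in the desired cylinder. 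The groupoid $\mathcal{G}_{G,\Lambda}$ is ample hence Hausdorff, and its amenability is implicit in the full-to-reduced identification used to obtain $\mathcal{O}_{G,\Lambda} \cong \ca(\mathcal{G}_{G,\Lambda})$.

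The main obstacle will be invoking a groupoid simplicity theorem with hypotheses precisely matching our setting. An alternative route that avoids this is to argue directly using the faithful conditional expectation $E: \mathcal{O}_{G,\Lambda} \to \M_{G,\Lambda}$ from Theorem \ref{T:Exp}: under $G$-aperiodicity the cycline subalgebra $\M_{G,\Lambda}$ collapses onto $\D_\Lambda$, and then any nonzero ideal must contain a nonzero positive element of $\D_\Lambda$; exploiting aperiodicity of the infinite path space together with the Cuntz--Krieger relations, one can then exhaust $\mathcal{O}_{G,\Lambda}$ from that element.
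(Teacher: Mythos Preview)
Your argument is correct, and for the equivalences (i)$\Leftrightarrow$(iii) and (iii)$\Leftrightarrow$(iv) it matches the paper's proof: the paper likewise invokes Theorem~\ref{T:chaGape1} for the first and Theorem~\ref{T:KMSPer} for the second (your unpacking of the tracial state space of $\ca(\Per_{G,\Lambda})\cong C(\widehat{\Per_{G,\Lambda}})$ is just the content behind that citation).

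The genuine difference lies in how you handle simplicity. The paper disposes of (i)$\Leftrightarrow$(ii) in one line by citing \cite[Theorem~6.6]{LY17_2}, an external result already characterising simplicity of $\O_{G,\Lambda}$ by $G$-aperiodicity. You instead argue (ii)$\Leftrightarrow$(iii) internally. Your (ii)$\Rightarrow$(iii) is particularly clean and self-contained: the observation that Theorems~\ref{T:Vrep} and~\ref{T:embed} embed $\ca(\Per_{G,\Lambda})$ into the centre of $\O_{G,\Lambda}$, together with centre-triviality for simple unital C*-algebras, immediately forces $\Per_{G,\Lambda}=\{0\}$. This avoids any appeal to the companion paper and is arguably the ``right'' argument here. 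For (iii)$\Rightarrow$(ii), your groupoid route (effectiveness via Lemma~\ref{L:Isoint} and Theorem~\ref{T:chaGape1}, minimality from strong connectedness, amenability from the hypotheses) is essentially what lies behind \cite[Theorem~6.6]{LY17_2}, so you are in effect reconstructing the cited result rather than bypassing it; your alternative sketch through the faithful expectation of Theorem~\ref{T:Exp} is viable but would need more work to be complete. In short: your proof trades a single external citation for an elegant centre argument in one direction and a reproof of the cited theorem in the other.
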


\begin{proof}
(i)$\iff$(ii) follows from \cite[Theorem~6.6]{LY17_2};
(i)$\iff$(iii) is from Theorem \ref{T:chaGape}; and
(i)$\iff$(iv) is from Theorems~\ref{T:chaGape1} and \ref{T:KMSPer}.
\end{proof}

%%%%%%%%%%%%%%%%%%%%

\section{Examples}

\label{S:eg}

In this section, we apply our results to compute the KMS states of several classes of self-similar $k$-graph C*-algebras $\O_{G,\Lambda}$
under our standing assumption \eqref{E:KMSass}.

Unless otherwise stated, as in Section \ref{S:KMS}, let $G$ be an amenable group, $\Lambda$ a strongly connected finite $k$-graph, and $(G,\Lambda)$ a pseudo free locally faithful self-similar action which satisfies the finite-state condition.

\subsection{Strongly connected finite $k$-graph C*-algebras} Let $G$ be the trivial group. Note that the assumption \eqref{E:KMSass} is redundant in this case. Then 
Theorem \ref{T:KMSPer} implies the main result of \cite{HLRS15}
for strongly connected finite $k$-graph C*-algebras.

\subsection{Rational independence}

\begin{thm}\label{KMS rat ind}
Suppose that $\{\ln (\rho(T_{e_i}))\}_{i=1}^{k}$ is rationally independent. Then there exists a unique KMS state on $\mathcal{O}_{G,\Lambda}$.
\end{thm}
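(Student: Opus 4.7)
The plan is to reduce uniqueness of the KMS state to the triviality of $\Per_{G,\Lambda}$, and then extract triviality directly from the rational independence hypothesis.

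First I would invoke Corollary~\ref{C:Per}, which states that $\Per_{G,\Lambda}$ is a subgroup of $\{n\in\bZ^k:\rho(\Lambda)^n=1\}$. Taking logarithms, the latter set is precisely the set of $n\in\bZ^k$ with $\sum_{i=1}^k n_i\ln(\rho(T_{e_i}))=0$. Since $\{\ln(\rho(T_{e_i}))\}_{i=1}^{k}$ is rationally independent, the only integer solution is $n=0$, so $\Per_{G,\Lambda}=\{0\}$.

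Once $\Per_{G,\Lambda}=\{0\}$ is established, Corollary~\ref{C:uniKMS} (the equivalence of $\Per_{G,\Lambda}=\{0\}$ with the existence of a unique KMS state on $\O_{G,\Lambda}$) immediately yields the desired conclusion. Alternatively, one can bypass Corollary~\ref{C:uniKMS} and finish via Theorem~\ref{T:KMSPer} directly: the KMS simplex of $\O_{G,\Lambda}$ is affinely isomorphic to the tracial state space of $\ca(\Per_{G,\Lambda})=\ca(\{0\})=\bC$, which has exactly one tracial state.

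There is essentially no obstacle here: the content of the result lies entirely in Corollary~\ref{C:Per} (which in turn depends on Theorem~\ref{T:KMSPer} through its proof). The rational independence hypothesis is precisely tailored to force the only element of the ambient subgroup $\{n\in\bZ^k:\rho(\Lambda)^n=1\}$ to be $0$, and the structural machinery developed in Sections~\ref{S:perthe}--\ref{S:KMS} does the rest.
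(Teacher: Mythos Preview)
Your proof is correct and follows essentially the same approach as the paper: use rational independence to force $\{n\in\bZ^k:\rho(\Lambda)^n=1\}=\{0\}$, apply Corollary~\ref{C:Per} to get $\Per_{G,\Lambda}=\{0\}$, and then conclude via Corollary~\ref{C:uniKMS}. The paper inserts the intermediate step of deducing $G$-aperiodicity from Theorem~\ref{T:chaGape1}, but since Corollary~\ref{C:uniKMS} already records the equivalence $\Per_{G,\Lambda}=\{0\}\iff$ unique KMS state, your direct route is equally valid.
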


\begin{proof}
Since $\{\ln (\rho(T_{e_i}))\}_{i=1}^{k}$ is rationally independent, we have that  $\{n\in \bN^k:p,q \in \mathbb{N}^k,\rho(\Lambda)^n=1\}=\{0\}$. By Corollary~\ref{C:Per}, $\Per_{G,\Lambda}=\{0\}$. Then, by Theorem~\ref{T:chaGape1}, $(G,\Lambda)$ is $G$-aperiodic. By Theorem~\ref{C:uniKMS}, there exists a unique KMS state on $\mathcal{O}_{G,\Lambda}$.
\end{proof}

\subsection{The degenerate property}

\begin{defn}
\label{D:deg}
A self-similar action $(G, \Lambda)$ is said to have
the \emph{degenerate property} if for any $g \in G$, any $v \in \Lambda^0$, there exists $\mu \in v\Lambda$ such that $g \vert_\mu=1_G$.
\end{defn}

%%%%%%%%%%%%%%%%%%%%%%%%%%%%

\begin{thm}\label{KMS_1 state degenerate property}
Suppose that $(G,\Lambda)$ satisfies the degenerate property. Then
\begin{enumerate}
\item $\Per_{G,\Lambda}=\Per_\Lambda$;
\item
the KMS simplex of $\mathcal{O}_{G,\Lambda}$ is affinely isomorphic to the tracial state space of $\ca(\Per_\Lambda)$.
% provided that $\Lambda$ is a strongly connected finite $k$-graph.
\end{enumerate}
\end{thm}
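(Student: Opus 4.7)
The plan is to prove (i) and then derive (ii) as an immediate consequence of Theorem~\ref{T:KMSPer}.

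For (i), I will show both inclusions. The inclusion $\Per_\Lambda \subseteq \Per_{G,\Lambda}$ is tautological: every cycline pair $(\mu,\nu) \in \C_\Lambda$ gives a cycline triple $(\mu,1_G,\nu) \in \C_{G,\Lambda}$ with the same degree difference $d(\mu)-d(\nu)$. The nontrivial direction is $\Per_{G,\Lambda} \subseteq \Per_\Lambda$. Given $(\mu,g,\nu)\in\C_{G,\Lambda}$, I will use the degenerate property to choose $\lambda \in s(\nu)\Lambda$ with $g\vert_\lambda=1_G$, and then argue that $(\mu(g\cdot\lambda),\nu\lambda)$ is a cycline pair in $\Lambda$.

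The verification proceeds as follows. For any $x \in s(\lambda)\Lambda^\infty$ the path $\lambda x$ lies in $s(\nu)\Lambda^\infty$, so by the cycline relation
\[
\mu(g\cdot(\lambda x))=\nu\lambda x, \qquad \text{i.e.,} \qquad \mu(g\cdot\lambda)(g\vert_\lambda\cdot x)=\nu\lambda x,
\]
and since $g\vert_\lambda=1_G$ this reduces to $\mu(g\cdot\lambda)x=\nu\lambda x$. Using the identity $s(g\cdot\lambda)=g\vert_\lambda\cdot s(\lambda)=s(\lambda)$ (which follows from Definition~\ref{D:ss}(i) applied to $\lambda\cdot s(\lambda)=\lambda$), the sources of $\mu(g\cdot\lambda)$ and $\nu\lambda$ coincide, so the pair is admissible. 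Hence $(\mu(g\cdot\lambda),\nu\lambda)\in\C_\Lambda$, and the degree difference is $(d(\mu)+d(\lambda))-(d(\nu)+d(\lambda))=d(\mu)-d(\nu)$, proving the inclusion.

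For (ii), Theorem~\ref{T:KMSPer} gives an affine isomorphism between the KMS simplex of $\O_{G,\Lambda}$ and the tracial state space of $\ca(\Per_{G,\Lambda})$; combining this with (i) yields the desired identification with the tracial state space of $\ca(\Per_\Lambda)$.

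The only subtle point is the short verification that the degenerate property really lets us convert a general cycline triple into a cycline pair without changing the degree difference; the degenerate property is custom-built for this, so I do not anticipate serious difficulty, provided the elementary identity $s(g\cdot\lambda)=g\vert_\lambda\cdot s(\lambda)$ is applied cleanly.
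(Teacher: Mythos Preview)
Your proposal is correct and follows essentially the same argument as the paper: both show $\Per_{G,\Lambda}\subseteq\Per_\Lambda$ by using the degenerate property to find $\lambda\in s(\nu)\Lambda$ with $g\vert_\lambda=1_G$ and verifying that $(\mu(g\cdot\lambda),\nu\lambda)\in\C_\Lambda$, then deduce (ii) from Theorem~\ref{T:KMSPer}. Your write-up is in fact slightly more explicit than the paper's in checking the source compatibility and the preservation of the degree difference.
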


\begin{proof}
(i) Obviously, $\Per_\Lambda\subseteq\Per_{G,\Lambda}$. We show $\Per_{G,\Lambda}\subseteq\Per_\Lambda$. Let $(\mu,g,\nu)\in\C_{G,\Lambda}$. Since $(G,\Lambda)$  satisfies the degenerate property, there exists $\lambda \in s(\nu)\Lambda$ such that $g \vert_\lambda=1_G$. Notice that $g \cdot (\lambda s(\lambda))=(g \cdot \lambda)s(\lambda)$. So $s(g \cdot \lambda)=s(\lambda)$. Then for any $x \in s(\lambda)\Lambda^\infty$, we have
\[
\mu(g \cdot \lambda)x=\mu( g \cdot (\lambda x))=\nu \lambda x.
\]
Thus $(\mu(g \cdot \lambda),\nu\lambda)\in\C_\Lambda$. Therefore $\Per_{G,\Lambda} \subseteq\Per_\Lambda$.

(ii) This follows immediately from (i) above and Theorem~\ref{T:KMSPer}.
\end{proof}

%%%%%%%%%%%%%%%%%%%%%%%

\subsection{Products of odometers C*-algebras}
\label{SS:PO}

In this subsection, let $\Lambda$ be a single-vertex $k$-graph. A product of odometers is a self-similar action $(\bZ, \Lambda)$ defined below:
\begin{enumerate}
\item $\Lambda^{e_i}:=\{x_{\fs}^i\}_{\fs =0}^{n_i-1},1\le i\le k,n_i>1$;
\item $1\cdot {x_\fs^i}=x_{(\fs+1)\ \text{mod } n_i}^i,1\le i\le k,0\le \fs\le n_i-1$;
\item $1|_{x_\fs^i}= \begin{cases}
    0 &\text{ if }0\le \fs< n_i-1\\
    1 &\text{ if }\fs=n_i-1
\end{cases}
\ (1\le i\le k);$
\item $x^i_\fs x^j_\ft = x^j_{\ft'} x^i_{\fs'} \text{ if } 1\leq i<j \leq k, \fs+\ft n_i=\ft'+\fs' n_j$.
\end{enumerate}
Then one can check that $(\bZ,\Lambda)$ is pseudo free, locally faithful, satisfies the finite-state condition and the degenerate property.

\begin{rem}
When $k=1$, the example above is the classical odometer and the KMS states of $\O_{\mathbb{Z},\Lambda}$ was studied by Laca-Raeburn-Ramagge-Whittaker in \cite{LRRW14}. When $k=2$, this example was constructed by Brownlowe-Ramagge-Robertson-Whittaker in terms of Zappa-Sz\'ep products of semigroups in \cite{BRRW14}. For general $k$, we studied the properties of $\O_{\mathbb{Z},\Lambda}$ intensively in \cite{LY17}.
\end{rem}

The theorem below gives the periodicity group $\Per_{\bZ,\Lambda}$ explicitly, which provides an extremal case in the periodicity theory (cf.~Corollary \ref{C:Per}).
The approach used in the proof below is indirected. A direct approach is given in Appendix~A.

%For convenience, let $n:=(n_1,\ldots, n_k)$.

\begin{thm}
\label{T:odoPer}
Let $n:=(n_1,\ldots, n_k)$. Then 
\[
\Per_{\bZ,\Lambda}=\Per_\Lambda=\{p\in \bZ^k:n^p=1\}.
\]
\end{thm}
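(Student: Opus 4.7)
The plan is to use the already-established general results of the paper to sandwich $\Per_{\bZ,\Lambda}$ between $\Per_\Lambda$ and the spectral-radius constraint $\{p\in\bZ^k:n^p=1\}$, and then close the remaining gap by exhibiting explicit cycline pairs.

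Since $(\bZ,\Lambda)$ satisfies the degenerate property (as asserted in Subsection~\ref{SS:PO}), Theorem~\ref{KMS_1 state degenerate property}(i) gives $\Per_{\bZ,\Lambda}=\Per_\Lambda$. Because $\Lambda^0$ is a singleton and $|\Lambda^{e_i}|=n_i$, each coordinate matrix $T_{e_i}=(n_i)$ is a $1\times 1$ scalar, so $\rho(\Lambda)=n$. Corollary~\ref{C:Per} then yields $\Per_{\bZ,\Lambda}\subseteq\{p\in\bZ^k:n^p=1\}$, and combining these two steps we already have
\[
\Per_\Lambda=\Per_{\bZ,\Lambda}\subseteq\{p\in\bZ^k:n^p=1\}.
\]

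To complete the proof I must establish the reverse inclusion $\{p\in\bZ^k:n^p=1\}\subseteq\Per_\Lambda$. Given such a $p$, I write $p=p_+-p_-$ with $p_+,p_-\in\bN^k$ of disjoint support (so that $n^{p_+}=n^{p_-}$). The natural cycline-pair candidate is $(\mu_0,\nu_0)$ where $\mu_0\in\Lambda^{p_+}$ and $\nu_0\in\Lambda^{p_-}$ are the ``all-zero'' paths whose standard-form factorizations (all $1$-edges first, then all $2$-edges, and so on) use only edges $x^i_0$. Proving $\mu_0 x=\nu_0 x$ for every $x\in\Lambda^\infty$ proceeds by induction on the truncation degree, repeatedly invoking the commutation rule $x^i_s x^j_t=x^j_{t'}x^i_{s'}$ with $s+n_i t=t'+n_j s'$: with a zero subscript on the left, this rearrangement slides the zero prefix of $\mu_0 x$ leftward past the intervening $x$-edges into the zero prefix $\nu_0$ of $\nu_0 x$, and the identity $n^{p_+}=n^{p_-}$ ensures that the two standard forms have matching degrees at every stage.

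The main obstacle is running this induction cleanly in the $k$-variable case, where swaps between different coordinate directions interact through the nontrivial factorization data. The proof in the main text streamlines the combinatorics indirectly by routing through Theorem~\ref{T:KMSPer}: together with the KMS analysis of $\O_{\bZ,\Lambda}$ from \cite{LY17} (and its special cases \cite{LRRW14,BRRW14}), the tracial state space of $\ca(\Per_{\bZ,\Lambda})$ can be matched against the tracial state space of $\ca(\{p:n^p=1\})$, forcing the two groups to coincide without an \emph{ad hoc} cycline construction. The explicit combinatorial verification of $(\mu_0,\nu_0)\in\C_\Lambda$ sketched above is carried out in Appendix~A.
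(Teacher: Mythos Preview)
Your reduction is correct and matches the paper: the degenerate property (Theorem~\ref{KMS_1 state degenerate property}) gives $\Per_{\bZ,\Lambda}=\Per_\Lambda$, and Corollary~\ref{C:Per} gives the inclusion into $\{p:n^p=1\}$, leaving only $\{p:n^p=1\}\subseteq\Per_\Lambda$. Your combinatorial idea for this last step---that the all-zero paths in $\Lambda^{p_+}$ and $\Lambda^{p_-}$ form a cycline pair---is exactly the approach of Appendix~A, though you have not carried it out; the bookkeeping device there is to encode a monochromatic path as an integer in base $n_i$ and track how the defining relation $\fs+\ft n_i=\ft'+\fs' n_j$ acts additively on these integers (Lemma~\ref{L:odometer}), after which the bijection of Corollary~\ref{C:odometer} feeds directly into \cite[Theorem~7.1]{DY09}.

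Your description of the main-text proof, however, is wrong: the paper does \emph{not} import an external KMS computation from \cite{LY17,LRRW14,BRRW14} and compare tracial state spaces. Instead it argues by contradiction, assuming $\Per_\Lambda\subsetneq\{p:n^p=1\}$ and building the concrete Cuntz-type representation $\pi:\O_\Lambda\to B(\ell^2(\bZ))$ with $\pi(x^i_\fs)\delta_m=\delta_{\fs+n_im}$. Whenever $n^p=n^q$ this $\pi$ collapses each $s_\mu$ ($\mu\in\Lambda^p$) onto some $s_{\rho(\mu)}$ ($\rho(\mu)\in\Lambda^q$), and one checks $\rho=\theta_{p,q}$ when $p-q\in\Per_\Lambda$. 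The central unitaries $W_i=\sum_\mu s_\mu s_{\rho_i(\mu)}^*$ for a basis of $\Per_\Lambda$ satisfy $W_i-1\in\ker\pi$, and by \cite[Theorem~8.3]{DY09} the ideal they generate is all of $\ker\pi$; hence the ``extra'' $W_0-1$ coming from $p^0-q^0\notin\Per_\Lambda$ lies in that ideal too. Applying the expectation $E$ to approximants of $W_0-1$ then forces $1_{\O_\Lambda}\in\ker\pi$, a contradiction. The only invocation of \cite{LY17} is in the degenerate case $\Per_\Lambda=\{0\}$, where simplicity (Corollary~\ref{C:uniKMS}) plus \cite[Theorem~5.12]{LY17} yields rational independence of $\{\ln n_i\}$ directly.
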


\begin{proof}
Notice that $\rho(\Lambda)=n$.
By Corollary~\ref{C:Per} and Theorem~\ref{KMS_1 state degenerate property}, it remains to show $\{p\in \bZ^k:n^p=1\}\subseteq \Per_\Lambda$.
If $\Per_\Lambda=\{0\}$, then by Theorem~\ref{C:uniKMS}, $\mathcal{O}_{G,\Lambda}$ is simple. By \cite[Theorem~5.12]{LY17},
$\{\ln n_i\}_{i=1}^k$ is rationally independent, which gives $\{p\in \bZ^k:n^p=1\}=\{0\}$ too.
So we may assume that $\Per_\Lambda \neq \{0\}$.
To the contrary, assume that
$\Per_\Lambda \subsetneq \{p\in \bZ^k:n^p=1\}$. Let $\{n^i=p^i-q^i: p^i, q^i\in\bN^k\}_{i=1}^{s}$ be a basis of $\Per_\Lambda$,
and $p^0-q^0 \in \{p-q:p,q \in \mathbb{N}^k,\rho(\Lambda)^p=\rho(\Lambda)^q\} \setminus \Per_\Lambda$. We may assume that $p^i\wedge q^i=0$ for all $0 \leq i \leq s$
by properties of $\Per_\Lambda$.

Inspired by \cite{Cun08}, there exists a representation $\pi:\mathcal{O}_\Lambda \to B(\ell^2(\mathbb{Z}))$ such that $\pi(x_{\fs}^i)(\delta_m)=\delta_{\fs+n_i m}$ for all $1 \leq i \leq k,0 \leq \fs \leq n_i-1,m \in \mathbb{Z}$.
One can verify that, for any $0\ne p,q \in \mathbb{N}^k$ with $p\wedge q=0$ and $n^p=n^q$, there exists a unique bijection $\rho:\Lambda^p \to \Lambda^q$ such that $\pi(s_{\mu}-s_{\rho(\mu)})=0$ for all $\mu \in \Lambda^p$. Furthermore, if $p-q \in \Per_\Lambda$, let $\theta_{p,q}:\Lambda^p \to \Lambda^q$ be the bijection given in Remark~\ref{R:thetapq} (in the case that $G$ is trivial). Then, by \cite[Theorem~7.1]{DY09}, for any $\mu \in \Lambda^p$ and $\nu \in \Lambda^q$, we have $\mu\nu=\theta_{p,q}(\mu)\theta_{p,q}^{-1}(\nu)$. Then $\pi(s_{\rho(\mu)} s_{\rho^{-1}(\nu)})=\pi(s_\mu s_\nu)=\pi(s_{\theta_{p,q}(\mu)}s_{\theta_{p,q}^{-1}(\nu)})$. So $\pi(s_{\theta_{p,q}(\mu)}^*s_{\rho(\mu)} s_{\rho^{-1}(\nu)})=\pi(s_{\theta_{p,q}^{-1}(\nu)})$. The fact that $\pi(s_{\theta_{p,q}^{-1}(\nu)})\neq 0$ forces $\rho(\mu)=\theta_{p,q}(\mu)$
as $d(\rho(\mu))=d(\theta_{p,q}(\mu))(=q)$. Hence $\rho=\theta_{p,q}$.

For $0 \leq i \leq s$, denote by $\rho_i:\Lambda^{p^i} \to \Lambda^{q^i}$ the map we just described and denote by a unitary $W_i:=\sum_{\mu \in \Lambda^{p^i}}s_\mu s_{\rho_i(\mu)}^*$. Since $W_i-1_{\mathcal{O}_\Lambda}=\sum_{\mu \in \Lambda^{p^i}}(s_\mu-s_{\rho_i(\mu)})s_{\rho_i(\mu)}^*, W_i -1_{\mathcal{O}_\Lambda}\in \ker(\pi)$. Denote by $\I$ the closed two-sided ideal of $\mathcal{O}_\Lambda$ generated by $\{W_i-1_{\mathcal{O}_\Lambda}\}_{i=1}^{s}$. Then $\I\subseteq \ker\pi$.
But by \cite[Theorem~8.3]{DY09}, $\O_\Lambda/\I$ is simple. So $\ker\pi=\I$.
In particular, $W_0-1_{\mathcal{O}_\Lambda} \in \I$. By \cite[Corollary~8.7]{DY09}, $\{W_i\}_{i=1}^{s}$ is in the center of $\mathcal{O}_{\Lambda}$. So $\mathcal{O}_\Lambda\{(W_i-1_{\mathcal{O}_\Lambda})\}_{i=1}^{s}$ can approximates $W_0-1_{\mathcal{O}_\Lambda}$. Denote by $E$ the expectation obtained from Proposition~\ref{P:CycCartan}.
So $E(\mathcal{O}_\Lambda\{(W_i-1_{\mathcal{O}_\Lambda})\}_{i=1}^{s})$ (and hence $\mathcal{O}_\Lambda\{(W_i-1_{\mathcal{O}_\Lambda})\}_{i=1}^{s}$) can approximates $E(W_0-1_{\mathcal{O}_\Lambda})=-1_{\mathcal{O}_\Lambda}$ because $p^0-q^0 \notin \Per_\Lambda$. Hence $1_{\mathcal{O}_\Lambda} \in \I$ which is a contradiction. Therefore $\Per_\Lambda=\{p\in \bZ^k:n^p=1\}$.
\end{proof}

By Theorems \ref{T:KMSPer} and \ref{T:odoPer} we have

\begin{cor}
The KMS simplex of $\mathcal{O}_{\bZ,\Lambda}$ is affinely isomorphic to the
tracial state space of $\ca(\{p\in \mathbb{Z}^k:n^p=1\})$.
\end{cor}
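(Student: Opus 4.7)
The plan is to obtain this corollary as an immediate consequence of the two main results that have already been established in the paper, namely Theorem~\ref{T:KMSPer} and Theorem~\ref{T:odoPer}. Thus no new computation is needed; the work is simply to verify that the hypotheses of Theorem~\ref{T:KMSPer} are met in the present setting, and then to substitute the explicit description of the periodicity group.

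First I would confirm that the pair $(\bZ,\Lambda)$ fits the framework of Theorem~\ref{T:KMSPer}. The group $\bZ$ is amenable; the single-vertex $k$-graph $\Lambda$ is strongly connected and finite; and, as observed in Subsection~\ref{SS:PO}, the product of odometers $(\bZ,\Lambda)$ is pseudo free, locally faithful, and satisfies the finite-state condition. Moreover, since $|\Lambda^0|=1$, the Perron-Frobenius eigenvector $x_\Lambda$ is concentrated at the unique vertex $v$ with $x_\Lambda(v)=1$, so the $G$-invariance requirement $x_\Lambda(g\cdot v)=x_\Lambda(v)$ in the standing assumption~\eqref{E:KMSass} holds trivially. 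Hence Theorem~\ref{T:KMSPer} applies and yields an affine isomorphism from the KMS simplex of $\O_{\bZ,\Lambda}$ onto the tracial state space of $\ca(\Per_{\bZ,\Lambda})$.

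Next I would invoke Theorem~\ref{T:odoPer}, which identifies the periodicity group explicitly as
\[
\Per_{\bZ,\Lambda}=\{p\in\bZ^k:n^p=1\},
\]
where $n=(n_1,\dots,n_k)$. Substituting this identification of the group into the conclusion of Theorem~\ref{T:KMSPer} gives the affine isomorphism between the KMS simplex of $\O_{\bZ,\Lambda}$ and the tracial state space of $\ca(\{p\in\bZ^k:n^p=1\})$, as required.

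There is no real obstacle here since both ingredients are already in place; the only point worth drawing attention to is the verification of the Perron-Frobenius compatibility condition~\eqref{E:KMSass}, which in this single-vertex setting is automatic and so does not need to be imposed as an additional hypothesis.
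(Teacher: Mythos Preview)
Your proposal is correct and follows exactly the paper's own approach: the corollary is stated as an immediate consequence of Theorems~\ref{T:KMSPer} and~\ref{T:odoPer}, and you have additionally spelled out why the hypotheses (including the standing assumption~\eqref{E:KMSass}) are satisfied in the single-vertex odometer setting.
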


\subsection{Unital Katsura algebras}
%%%%%%
Let $\Lambda$ be a $1$-graph such that $T_{e_1}(v,v) \geq 2$ for all $v \in \Lambda^0$.  For  $v,w \in \Lambda^0$, if $T_{e_1}(v,w) \neq 0$,   
we list the edges from $w$ to $v$ as $v\Lambda^{e_1}w:=\{(v,w,n):0 \leq n <T_{e_1}(v,w)\}$.

Let $B \in M_{\Lambda^0}(\mathbb{Z})$ satisfy the following properties: 
\begin{itemize}
\item 
$B(v,v)=1$ for all $v \in \Lambda^0$;
\item $\vert B(v,w)\vert\leq T_{e_1}(v,w)$ for all $v, w \in \Lambda^0$;
\item 
$B(v,w)=0\iff T_{e_1}(v,w)=0$ for all $v,w \in \Lambda^0$.
\end{itemize}
We construct a self-similar action $(\bZ, \Lambda)$ using $B$ as follows: 
\begin{itemize}
\item $g\cdot v=v$ for all $g\in \bZ$ and $v\in \Lambda^0$. 
\item For $g \in \mathbb{Z}$, $v,w \in \Lambda^0$ with $T_{e_1}(v,w) \neq 0$, and $0 \leq m <T_{e_1}(v,w)$, let $h \in \mathbb{Z}$ and $0 \leq n <T_{e_1}(v,w)$ be the unique integers such that $g B(v,w)+m=h T_{e_1}(v,w)+n$. Define 
\[
g \cdot (v,w,m):=(v,w,n) \text{ and }g \vert_{(v,w,m)}:=h.
\]
\end{itemize}
Then one can check that $(\bZ,\Lambda)$ is pseudo free, locally faithful, and satisfies the finite-state condition and the degenerate property.

%%%%%%

\begin{rem}
Katsura in \cite{Kat08_1} constructed a unital C*-algebra $\mathcal{O}_{T_{e_1},B}$ which is the universal C*-algebra generated by a family of mutually orthogonal projections $\{q_v\}_{v \in \Lambda^0}$, a family of partial unitaries $\{r_v\}_{v \in \Lambda^0}$ with $r_v^*r_v=r_vr_v^*=q_v$, and a family of partial isometries $\{t_{v,w,z}:T_{e_1}(v,w) \neq 0,z \in \mathbb{Z}\}$ satisfying the following properties: 
\begin{enumerate}
\item $t_{v,w,z}r_w=t_{v,w,z+T_{e_1}(v,w)}$ and $r_v t_{v,w,z}=t_{v,w,z+B(v,w)}$ for all $v,w \in \Lambda^0$ and $z\in \mathbb{Z}$ with $T_{e_1}(v,w) \neq 0$;
\item $t_{v,w,z}^*t_{v,w,z}=q_w$ for all $z\in \bZ$ and $v,w \in \Lambda^0$ with $T_{e_1}(v,w) \neq 0$;
\item $q_v=\sum\limits_{\{w \in \Lambda^0:T_{e_1}(v,w) \neq 0\}}\sum\limits_{n=1}^{T_{e_1}(v,w)}t_{v,w,n}t_{v,w,n}^*$ for all $v \in \Lambda^0$.
\end{enumerate}
Exel-Pardo in \cite{EP17} showed that $\mathcal{O}_{\mathbb{Z},\Lambda}$ is isomorphic to $\mathcal{O}_{T_{e_1},B}$ via the identifications 
$s_v \mapsto q_v$,  $s_{v,w,n} \mapsto t_{v,w,n+1}$, and $u_z \mapsto (\sum_{v \in \Lambda^0}r_v)^z$ for all 
$z\in \bZ$, $v,w \in \Lambda^0$ with $T_{e_1}(v,w) \neq 0$, and $0 \leq n <T_{e_1}(v,w)$. 
\end{rem}

\begin{rem}
Katsura in \cite{Kat08_1}, and Exel-Pardo in \cite{EP17} both showed that $\mathcal{O}_{\mathbb{Z},\Lambda} (\cong \mathcal{O}_{T_{e_1},B})$ is simple, separable, amenable, purely infinite, and satisfies the UCT. Here we would like to comment that this can also be obtained by invoking our results in \cite{LY17_2}. In fact, By \cite[Theorem~6.6]{LY17_2}, $\mathcal{O}_{\mathbb{Z},\Lambda}$ is amenable and satisfies the UCT. Since $T_{e_1}(v,v) \geq 2$ for all $v \in \Lambda^0$, $\Lambda$ is aperiodic. Since $(\bZ,\Lambda)$ satisfies the degenerate property, then Theorem~\ref{KMS_1 state degenerate property} implies that $(\bZ, \Lambda)$ is $G$-aperiodic. Applying \cite[Theorem~6.6]{LY17_2} again yields that
 $\mathcal{O}_{T_{e_1},B}$ is simple. Since $T_{e_1}(v,v) \geq 2$ for all $v \in \Lambda^0$, there exist no nonzero graph traces on $\Lambda$. Thus 
 $\mathcal{O}_{\mathbb{Z},\Lambda}$ is purely infinite by \cite[Theorem~6.13]{LY17_2}.
\end{rem}

\begin{thm}
There exists a unique KMS state on $\mathcal{O}_{\mathbb{Z},\Lambda}$.
\end{thm}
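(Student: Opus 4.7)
The plan is to deduce uniqueness from Corollary~\ref{C:uniKMS}, which reduces the claim to showing that $(\mathbb{Z}, \Lambda)$ is $\mathbb{Z}$-aperiodic, or equivalently that $\Per_{\mathbb{Z}, \Lambda} = \{0\}$. Since the preamble of this subsection has already noted that $(\mathbb{Z}, \Lambda)$ is pseudo free, locally faithful, and satisfies both the finite-state condition and the degenerate property, all hypotheses of the main structural theorems of Section~\ref{S:KMS} are in force.

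First I would invoke Theorem~\ref{KMS_1 state degenerate property}(i): the degenerate property forces $\Per_{\mathbb{Z}, \Lambda} = \Per_\Lambda$. This collapses the problem to aperiodicity of the underlying $1$-graph $\Lambda$, i.e.\ to showing $\Per_\Lambda = \{0\}$.

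To verify aperiodicity of $\Lambda$, I would exploit the hypothesis $T_{e_1}(v,v) \geq 2$ for every $v \in \Lambda^0$. This means that every vertex $v$ supports at least two distinct loops $\ell_1^v, \ell_2^v$. Choosing any sequence $(i_n)_{n \geq 0} \in \{1,2\}^{\mathbb{N}}$ which is not eventually periodic and forming $x := \ell_{i_0}^v \ell_{i_1}^v \ell_{i_2}^v \cdots \in v\Lambda^\infty$ produces an infinite path with $\sigma^p(x) \neq \sigma^q(x)$ for all $p \neq q \in \mathbb{N}$; hence $x$ is aperiodic. As $v$ is arbitrary, $\Lambda$ is aperiodic, and by Theorem~\ref{T:chaGape1} applied to the trivial group (which is automatically pseudo free and locally faithful), this gives $\Per_\Lambda = \{0\}$.

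Combining these facts yields $\Per_{\mathbb{Z}, \Lambda} = \{0\}$, so $(\mathbb{Z}, \Lambda)$ is $\mathbb{Z}$-aperiodic by Theorem~\ref{T:chaGape1}, and Corollary~\ref{C:uniKMS} then delivers the unique KMS state on $\mathcal{O}_{\mathbb{Z}, \Lambda}$. The argument is essentially a direct application of the general machinery built up in Section~\ref{S:KMS}; no real obstacle is expected, since the most delicate ingredients---the degenerate property and the structural hypotheses on $\Lambda$---are granted by the setup of this subsection.
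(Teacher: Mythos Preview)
Your proof is correct and follows essentially the same route as the paper. The paper's formal proof cites simplicity of $\mathcal{O}_{\mathbb{Z},\Lambda}$ (from \cite{Kat08_1}, \cite{EP17}, or \cite{LY17_2}) and then applies Corollary~\ref{C:uniKMS} via (ii)$\Leftrightarrow$(iv), whereas you establish $\Per_{\mathbb{Z},\Lambda}=\{0\}$ directly and apply Corollary~\ref{C:uniKMS} via (iii)$\Leftrightarrow$(iv); your argument is precisely the content of the Remark immediately preceding the theorem, made self-contained.
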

\begin{proof}
We can invoke either Katsura's result in \cite{Kat08_1}, or Exel-Pardo's result in \cite{EP17}, or our result in \cite{LY17_2} to deduce that $\mathcal{O}_{\mathbb{Z},\Lambda}$ is simple. By Corollary~\ref{C:uniKMS}, there exists a unique KMS state on $\mathcal{O}_{\mathbb{Z},\Lambda}$.
\end{proof}

%%%%%%%%%%%%%%%%%%%%%%%%%%%%%%%%%%%%%%%%%%%%%

\appendix
\section{Another approach to compute $\Per_{\bZ,\Lambda}$ for products of odometers}

Keep the same notation in Subsection \ref{SS:PO}. The main purpose of this appendix is to compute $\Per_{\Lambda}$ using a direct and elementary approach.

%%%%%%%%%%%%%%%%%%%%
Put $[n_i]:=\{0,1,\ldots, n_i-1\}$.
For $w=s_1\cdots s_\ell\in\Lambda^{\ell e_i}$ with $s_1,\ldots, s_\ell\in[n_i]$, let
\[
[w]_{n_i}:=s_1+s_2 n_i+\cdots + s_\ell n_i^{\ell-1},
\]
which can be regarded as the representation of $w$ in base-$n_i$ number system.
For $w_i\in\Lambda_i$, the $1$-graph of colour $i$ with $1\le i\le \fk$, let
%\[
%\mathbf{n}^{|\mathbf{w_\ell}|}:=\Pi_{i=1}^\ell n_i^{|w_i|}.
%\]
\[
\mathbf{n}^{|(w_1,\ldots,w_\fk)|}:=\Pi_{i=1}^\fk n_i^{|w_i|}.
\]

\begin{lem}
\label{L:odometer}
Let $p,q\in\bN$, $\{i_\ell\}_{\ell=1}^p$ and $\{j_\fk\}_{\fk=1}^q$ be two disjoint non-empty sets in $\{1,\ldots,k\}$.
Let $u_\ell\in\Lambda_{{i_\ell}}$ for $1\le \ell\le p$, and $v_\fk\in\Lambda_{{j_\fk}}$ for $1\le \fk\le q$. Then
\begin{align*}
%\label{E:theta}
x_{u_1}^{i_1}\cdots x_{u_p}^{i_p}\, x_{v_1}^{j_1}\cdots x_{v_q}^{j_q}
=x_{v_1'}^{j_1}\cdots x_{v_q'}^{j_q}\, x_{u_1'}^{i_1}\cdots x_{u_p'}^{i_p},
\end{align*}
where $u_\ell'\in\Lambda_{i_\ell}$ with $d(u_\ell)=d(u_\ell')$ for $1\le \ell\le p$
and $v_\fk'\in \Lambda_{j_\fk}$ with $d(v_\fk)=d(v_\fk')$ for $1\le \fk\le q$ are uniquely determined by the following identity:
%\begin{align*}
%&\sum_{\ell=1}^p[u_\ell]_{n_{i_\ell}}\mathbf{n}^{|\mathbf{u}_{\ell-1}|} +\left(\sum_{\fk=1}^q[v_\fk]_{n_{j_\fk}}\mathbf{n}^{|\mathbf{v}_{\fk-1}|}\right)\mathbf{n}^{|\mathbf{u}_p|}\\
%=&\sum_{\fk=1}^q[v_\fk']_{n_{j_\fk}}\mathbf{n}^{|\mathbf{v}'_{\fk-1}|}+\left(\sum_{\ell=1}^p[u_\ell']_{n_{i_\ell}}\mathbf{n}^{|\mathbf{u}'_{\ell-1}|} \right)\mathbf{n}^{|\mathbf{v}_q|}.
%\end{align*}
%%%%%%%%%%
\begin{align}
\nonumber
&\sum_{\ell=1}^p[u_\ell]_{n_{i_\ell}}\mathbf{n}^{|(u_1,\ldots, u_{\ell-1})|}
+\left(\sum_{\fk=1}^q[v_\fk]_{n_{j_\fk}}\mathbf{n}^{|(v_1,\ldots, v_{\fk-1})|}\right)\mathbf{n}^{|(u_1,\ldots, u_p)|}\\
\label{E:odometer}
=&\sum_{\fk=1}^q[v_\fk']_{n_{j_\fk}}\mathbf{n}^{|(v_1,\ldots, v_{\fk-1})|}
  +\left(\sum_{\ell=1}^p[u_\ell']_{n_{i_\ell}}\mathbf{n}^{|(u_1,\ldots, u_{\ell-1})|}  \right)\mathbf{n}^{|(v_1,\ldots, v_q)|}.
\end{align}
Here $\mathbf{n}^{|u_0|}:=1$ and $\mathbf{n}^{|v_0|}:=1$.
\end{lem}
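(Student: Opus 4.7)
The plan is to establish this lemma by induction. Note first that existence and uniqueness of $u_\ell'$ and $v_\fk'$ with the prescribed degrees follow from the unique factorization property of the $k$-graph $\Lambda$; thus the substantive content is the numerical identity \eqref{E:odometer}.

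I would begin with the base case $p = q = 1$, using a secondary induction on $|u_1| + |v_1|$. When $|u_1| = |v_1| = 1$, writing $u_1 = s$, $v_1 = t$, $u_1' = s'$, $v_1' = t'$, the identity \eqref{E:odometer} reduces to $s + t\, n_{i_1} = t' + s'\, n_{j_1}$, which is precisely the defining commutation relation (iv) of the self-similar action (the case $i_1 > j_1$ is handled symmetrically by running the same relation backward). For the inductive step, assuming WLOG $|u_1| > 1$, factor $u_1 = u_1'' \cdot s$ with $s \in [n_{i_1}]$; apply the $|u|=1$ case to commute $x_s^{i_1}$ past $x_{v_1}^{j_1}$, producing some intermediate $v_1^{(1)}$ and $s'$; then apply the inductive hypothesis to $x_{u_1''}^{i_1} x_{v_1^{(1)}}^{j_1}$, which has strictly smaller total length. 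Combining the two local numerical identities via $[u_1]_{n_{i_1}} = [u_1'']_{n_{i_1}} + s\, n_{i_1}^{|u_1''|}$ and distributivity yields \eqref{E:odometer} in this case.

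Next, I would extend to general $p, q$ by iteratively moving $u$-blocks across the $v$-block from right to left. Specifically, I would first treat the case $p = 1$ with $q$ arbitrary by induction on $q$: commute $x_{u_1}^{i_1}$ past $x_{v_q}^{j_q}$ using the base case, then apply the inductive hypothesis to the shorter product remaining. For general $p$, apply the $p = 1$ result to $x_{u_p}^{i_p} x_{v_1}^{j_1} \cdots x_{v_q}^{j_q}$ to move $u_p$ into its final position, and recurse on the remaining $p - 1$ many $u$-blocks against the updated $v$-block. Each pass preserves the numerical identity by the same distributive bookkeeping as in the base case.

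The main obstacle throughout is tracking the numerical indices through the cascade of commutations. The guiding observation is that both sides of \eqref{E:odometer} express the same lexicographic address of the product path — one side encodes it reading colors in the order $(i_1,\ldots,i_p,j_1,\ldots,j_q)$ and the other in the order $(j_1,\ldots,j_q,i_1,\ldots,i_p)$. The defining commutation rule for $(\bZ,\Lambda)$ is precisely the local arithmetic rule that preserves this address across one adjacent swap, and the inductive arguments simply propagate this invariance. At each step the verification reduces to rearranging sums of the form $\sum [\cdot]_{n_c} \mathbf{n}^{|\cdot|}$ via associativity and distributivity, which, while tedious, is entirely routine.
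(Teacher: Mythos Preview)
Your proposal is correct and follows essentially the same route as the paper: both arguments reduce to the base case $p=q=1$, $|u_1|=|v_1|=1$ (which is the defining commutation rule (iv)), and then propagate the numerical identity through a cascade of single commutations by induction. The only cosmetic differences are that the paper organizes the $p=q=1$ step as a sequential induction (first on $|v_1|$ with $|u_1|=1$ fixed, then on $|u_1|$) rather than on the total length $|u_1|+|v_1|$, and that the paper proves uniqueness of the solution to \eqref{E:odometer} by an explicit division argument whereas you appeal directly to unique factorization in $\Lambda$; both variants are equally valid.
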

%%%%%%%%%%%%%%%%%%%%

\begin{proof}
We first show that the lemma holds true for $p=q=1$. If $|u_1|=|v_1|=1$, clearly \eqref{E:odometer} is satisfied by our assumption.
Let assume that \eqref{E:odometer} holds true for $|u_1|=1$ and $|v_1|\le L$.
%Let $u=s$ and $v=t_1\cdots t_L$.
Let $1\le j\ne i_1\le k$.
Then
\begin{align*}
x^{i_1}_{u_1} x^{j_1}_{v_1} x_\ft^{j_2}= x^{j_1}_{v_1'} x^{i_1}_{u_1''} x_\ft^{j_2}=x^{j_1}_{v_1'} x_{\ft'}^{j_2}x^{i_1}_{u_1'},
\end{align*}
where
\begin{align*}
u_1+[v_1]_{n_{j_1}} n_{i_1}&=[v_1']_{n_{j_1}}+u_1'' n_{j_1}^{|v_1|} \quad (u_1, u_1''\in[n_{i_1}], v_1,v_1'\in [n_{j_1}]^{|v_1|}),\\
u_1''+\ft n_{i_1}&=\ft'+u_1' n_{j_1} \quad (u_1', u_1''\in[n_{i_1}], \ft, \ft''\in [n_{j_1}]).
\end{align*}
Then multiply the second identity above by $n_{j_1}^{|v_1|}$ and then add it to the first one above yields
\[
u_1+([v_1]_{n_{j_1}}+\ft n_{j_1}^{|v_1|}) n_{i_1}=([v_1']_{n_{j_1}}+\ft'n_{j_1}^{|v_1|})+u_1' n_{j_1}^{|v_1\ft|}.
\]
This proves the lemma for $u_1\in \Lambda^{e_{i_1}}$ and any $v_1\in \Lambda_{j_1}$. Apply induction again to get that the lemma holds any $u\in\Lambda_{i_1}$
and $v\in\Lambda_{j_1}$.

A similar argument applies to general $p,q\in \bN$.

\smallskip
To see the uniqueness solution of \eqref{E:odometer}, let
\[
M:=\sum_{\ell=1}^p[u_\ell]_{n_{i_\ell}}\mathbf{n}^{|(u_1,\ldots, u_{\ell-1})|}, \ N:=\sum_{\fk=1}^q[v_\fk]_{n_{i_\ell}}\mathbf{n}^{|(v_1,\ldots, v_{\fk-1})|}.
\]
Then simple calculations can show that
\[
M \in [\mathbf{n}^{|(u_1,\ldots, u_p)|}],\ N\in [\mathbf{n}^{|(v_1,\ldots, v_q)|}].
\]
So the left hand side of \eqref{E:odometer} is a number in $[\mathbf{n}^{|(v_1,\ldots, v_q)|}\mathbf{n}^{|(v_1,\ldots, v_q)|}]$. Hence
there are unique numbers $M'\in  [\mathbf{n}^{|(u_1,\ldots, u_p)|}]$ and $N'\in  [\mathbf{n}^{|(v_1,\ldots, v_q)|}]$ such that
\[
M+N \mathbf{n}^{|(u_1,\ldots, u_p)|}=N'+M' \mathbf{n}^{|(v_1,\ldots, v_q)|}.
\]
%Then apply divisions to $M'$ and $N'$ consecutively by $\mathbf{n}^{|(u_1,\ldots, u_{p-1})|},\ldots,  \mathbf{n}^{|(u_1)|}$ and
%$\mathbf{n}^{|(v_1,\ldots, v_{q-1})|},\ldots,  \mathbf{n}^{|(v_1)|}$ to obtain $u_1', \ldots, u_p'$ and $v_1', \ldots, v_q'$ respectively.
Then apply divisions to $M'$ consecutively by $\mathbf{n}^{|(u_1,\ldots, u_{p-1})|},\ldots,  \mathbf{n}^{|(u_1)|}$  to obtain unique $u_1', \ldots, u_p'$.
Likewise for $N'$.
\end{proof}

The above lemma exactly says that all ``higher-level" commutation relations are still odometers. So the following corollary is now immediate.

\begin{cor}
\label{C:odometer}
Let $p,q\in\bN^k$ and $n^p=n^q$. If $u_\ell\in\Lambda^{p_\ell e_{i_\ell}}$ and $v_\ell\in\Lambda^{q_\ell e_{j_\fk}}$ for $1\le \ell\le k$. Then
 there is a bijection
$
\gamma:\Lambda^p\to \Lambda^q$, $x_{u_1}^{i_1}\cdots x_{u_p}^{i_p}\mapsto x_{v_1'}^{j_1}\cdots x_{v_q'}^{j_q}
$
such that
\[
x_{u_1}^{i_1}\cdots x_{u_p}^{i_p}\, x_{v_1}^{j_1}\cdots x_{v_q}^{j_q}
=\gamma(x_{u_1}^{i_1}\cdots x_{u_p}^{i_p})\, \gamma^{-1}(x_{v_1}^{j_1}\cdots x_{v_q}^{j_q}).
\]
%\begin{align}
%\label{E:odometer1}
%x_{u_1}^{i_1}\cdots x_{u_p}^{i_p}\, x_{v_1}^{j_1}\cdots x_{v_q}^{j_q}
%=x_{v_1'}^{j_1}\cdots x_{v_q'}^{j_q}\, x_{u_1'}^{i_1}\cdots x_{u_p'}^{i_p}.
%\end{align}
\end{cor}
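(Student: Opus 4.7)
The plan is to combine the $k$-graph factorization property with iterated applications of Lemma~A.1 to construct the bijection $\gamma$ and verify the factorization identity.

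First, given $\mu \in \Lambda^p$ and $\nu \in \Lambda^q$ with $n^p = n^q$, the $k$-graph factorization property applied to $\mu\nu \in \Lambda^{p+q} = \Lambda^{q+p}$ yields a unique decomposition $\mu\nu = \mu' \nu'$ with $\mu' \in \Lambda^q$ and $\nu' \in \Lambda^p$. This defines an assignment $\mu \mapsto \mu'$ that a priori depends on $\nu$; the corollary asserts that it does not, so the task reduces to establishing this independence.

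To prove it, I would work in canonical color-factored form $\mu = y_1 y_2 \cdots y_k$ and $\nu = z_1 z_2 \cdots z_k$ (with $y_i \in \Lambda^{p_i e_i}$ and $z_i \in \Lambda^{q_i e_i}$), and iteratively transport each $z_j$ leftward past the $y_\ell$'s with $\ell > j$. For cross-color transpositions ($j \neq \ell$), Lemma~A.1 applies verbatim, and its arithmetic swap identity
\[
M + N\,\mathbf{n}^{|u|} = N' + M'\,\mathbf{n}^{|v|}
\]
governs the updated integer codes. For same-color mergers ($j = \ell$), the $k$-graph factorization splits the combined block uniquely as a $q_j e_j$-prefix followed by a $p_j e_j$-suffix. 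The hypothesis $n^p = n^q$ enters decisively in each application of the arithmetic identity: with both normalizing powers equal to $n^p$, the uniqueness of base-$n^p$ expansion forces $M = N'$ and $N = M'$ at every swap. Composing these updates shows that the final prefix $\mu'$ inherits only $\mu$'s code and the final suffix $\nu'$ inherits only $\nu$'s code. Consequently, $\gamma(\mu) := \mu'$ depends only on $\mu$, the identity $\mu\nu = \gamma(\mu)\gamma^{-1}(\nu)$ holds, and $\gamma$ is a bijection by symmetry between $p$ and $q$.

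The main obstacle I anticipate is the inductive bookkeeping when the supports of $p$ and $q$ overlap, since Lemma~A.1 only covers disjoint-color swaps: one must verify that the result of the iterated swap-and-merge procedure is independent of the order of operations. The uniqueness clauses within Lemma~A.1 and the $k$-graph factorization provide the needed coherence, so this obstacle should be modest, consistent with the authors' remark that the corollary is ``now immediate''.
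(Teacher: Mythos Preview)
Your key arithmetic insight---that when $n^p=n^q$ the identity $M+N\,n^p=N'+M'\,n^q$ forces $N'=M$ and $M'=N$ by uniqueness of division with remainder---is exactly the paper's argument. The difference is that you take an unnecessary detour: Lemma~\ref{L:odometer} is already stated for \emph{arbitrary} multi-colour products $x_{u_1}^{i_1}\cdots x_{u_p}^{i_p}$ against $x_{v_1}^{j_1}\cdots x_{v_q}^{j_q}$, not just single-block swaps, so the paper applies it \emph{once} to the full words and reads off $N'=M$, $M'=N$ immediately. There is no need to iterate colour-by-colour or to track intermediate codes.

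Your overlapping-supports worry is also a red herring here. Lemma~\ref{L:odometer} already assumes the colour sets $\{i_\ell\}$ and $\{j_\fk\}$ are disjoint, and the corollary is applied (in Theorem~\ref{T:odoPer1}) only after writing an element of $\{p:n^p=1\}$ as $p^+-p^-$ with $p^+\wedge p^-=0$; so the ``same-colour merger'' case never arises and no extra bookkeeping is needed. In short, your proposal is correct but more laborious than the paper's three-line argument: a single invocation of Lemma~\ref{L:odometer} plus the uniqueness of quotient/remainder modulo $n^p$.
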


\begin{proof}
Notice that $\mathbf{n}^{|(u_1,\ldots, u_{\ell-1})|}=n^p$ and $\mathbf{n}^{|(v_1,\ldots, v_q)|}=n^q$.
So by Lemma \ref{L:odometer},
\begin{align*}
&\sum_{\ell=1}^p[u_\ell]_{n_{i_\ell}}\mathbf{n}^{|(u_1,\ldots, u_{\ell-1})|}
+\left(\sum_{\fk=1}^q[v_\fk]_{n_{j_\fk}}\mathbf{n}^{|(v_1,\ldots, v_{\fk-1})|}\right)n^p\\
=&\sum_{\fk=1}^q[v_\fk']_{n_{j_\fk}}\mathbf{n}^{|(v_1,\ldots, v_{\fk-1})|}
  +\left(\sum_{\ell=1}^p[u_\ell']_{n_{i_\ell}}\mathbf{n}^{|(u_1,\ldots, u_{\ell-1})|}  \right)n^p.
\end{align*}
So
\[
\sum_{\fk=1}^q[v_\fk']_{n_{j_\fk}}\mathbf{n}^{|(v_1,\ldots, v_{\fk-1})|}=\sum_{\ell=1}^p[u_\ell]_{n_{i_\ell}}\mathbf{n}^{|(u_1,\ldots, u_{\ell-1})|}
\]
and
\[
\sum_{\ell=1}^p[u_\ell']_{n_{i_\ell}}\mathbf{n}^{|(u_1,\ldots, u_{\ell-1})|}=\sum_{\fk=1}^q[v_\fk]_{n_{j_\fk}}\mathbf{n}^{|(v_1,\ldots, v_{\fk-1})|}.
\]
As mentioned before, $v_1', \ldots, v_q'$ are uniquely determined by $u_1, \ldots, u_p$.
Thus there is a bijection
$
\gamma:\Lambda^p\to \Lambda^q$, $x_{u_1}^{i_1}\cdots x_{u_p}^{i_p}\mapsto x_{v_1'}^{j_1}\cdots x_{v_q'}^{j_q}
$
such that
\[
x_{u_1}^{i_1}\cdots x_{u_p}^{i_p}\, x_{v_1}^{j_1}\cdots x_{v_q}^{j_q}
=\gamma(x_{u_1}^{i_1}\cdots x_{u_p}^{i_p})\, \gamma^{-1}(x_{v_1}^{j_1}\cdots x_{v_q}^{j_q}).
\]
We are done.
\end{proof}

\begin{thm}
\label{T:odoPer1}
$\Per_\Lambda=\{p\in\bZ^k: n^p=1\}$.
%\{p-q:p,q \in \mathbb{N}^k,\rho(\Lambda)^p=\rho(\Lambda)^q\}$.
%Hence the KMS$_1$ state space of $\mathcal{O}_{G,\Lambda}$ is affinely homeomorphic to the
%tracial state space of $\ca(\{p-q:p,q \in \mathbb{N}^k,\rho(\Lambda)^p=\rho(\Lambda)^q\})$.
\end{thm}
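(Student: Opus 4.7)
The inclusion $\Per_\Lambda \subseteq \{p \in \bZ^k : n^p = 1\}$ is immediate from Corollary~\ref{C:Per} since $\rho(\Lambda) = n$. For the reverse inclusion, given $z \in \bZ^k$ with $n^z = 1$, I would write $z = p - q$ with $p, q \in \bN^k$ and $p \wedge q = 0$; this forces $n^p = n^q$ and makes the supports of $p$ and $q$ disjoint, so Corollary~\ref{C:odometer} supplies a bijection $\gamma : \Lambda^p \to \Lambda^q$ satisfying $\mu\nu = \gamma(\mu)\gamma^{-1}(\nu)$ for all $\mu \in \Lambda^p$ and $\nu \in \Lambda^q$. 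The plan is to show that for every $\mu_0 \in \Lambda^p$ the pair $(\mu_0, \gamma(\mu_0))$ is cycline, which places $z = d(\mu_0) - d(\gamma(\mu_0))$ in $\Per_\Lambda$.

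Fix $\mu_0 \in \Lambda^p$. Cyclicity of $(\mu_0, \gamma(\mu_0))$ requires $\mu_0 x = \gamma(\mu_0) x$ for every $x \in \Lambda^\infty$, which is equivalent to agreement on every initial $(p+q+r)$-segment, $r \in \bN^k$. Expanding concatenations yields
\begin{align*}
(\mu_0 x)(0, p+q+r) &= \mu_0 \cdot x(0, q) \cdot x(q, q+r),\\
(\gamma(\mu_0) x)(0, p+q+r) &= \gamma(\mu_0) \cdot x(0, p) \cdot x(p, p+r).
\end{align*}
Applying Corollary~\ref{C:odometer} to the direction-$p$-first and direction-$q$-first factorizations of $x(0, p+q)$ gives $\gamma(x(0,p)) = x(0,q)$, whence $\mu_0 \cdot x(0, q) = \gamma(\mu_0) \cdot \gamma^{-1}(x(0, q)) = \gamma(\mu_0) \cdot x(0, p)$. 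Thus cyclicity reduces to the \emph{shift equality} $x(p, p+r) = x(q, q+r)$ for every $x \in \Lambda^\infty$ and every $r \in \bN^k$; equivalently, $\sigma^p(x) = \sigma^q(x)$ on $\Lambda^\infty$.

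Establishing the shift equality is the main technical hurdle, and is where Lemma~\ref{L:odometer} does the real work. I would prove it for each $r$ by comparing the two three-factor decompositions
\[
x(0, p+q+r) \;=\; x(0,p) \cdot x(p, p+r) \cdot x(p+r, p+q+r) \;=\; x(0,q) \cdot x(q, q+r) \cdot x(q+r, p+q+r)
\]
and invoking the arithmetic identity~\eqref{E:odometer} of Lemma~\ref{L:odometer} on suitable $(p+q)$-sub-paths to propagate the label coincidence from the short $(p+q)$-case to the full length; the hypothesis $n^p = n^q$ is exactly what forces the base-$\mathbf{n}$ residues produced by~\eqref{E:odometer} on the two sides to coincide, so the middle $r$-factors must agree. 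The principal bookkeeping obstacle is that when $r$ has components inside the support of $p$ or of $q$, Corollary~\ref{C:odometer} cannot be applied directly to the augmented degrees $(p+r, q+r)$ (their supports need no longer be disjoint); this is handled by splitting $r$ into its part inside the combined support of $p$ and $q$ and its part in the complement, and processing them in turn through Lemma~\ref{L:odometer}.
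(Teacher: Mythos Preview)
Your reduction is correct but ultimately circular, and the sketch you offer to close it is not a proof. Once you have reduced the cyclicity of $(\mu_0,\gamma(\mu_0))$ to the shift equality $\sigma^p(x)=\sigma^q(x)$ for all $x\in\Lambda^\infty$, you are literally asserting $p-q\in\Per_\Lambda$ in its local-periodicity form (Theorem~\ref{T:Pergroup} with trivial $G$, single vertex). The passage through the specific pair $(\mu_0,\gamma(\mu_0))$ and the identity $\gamma(x(0,p))=x(0,q)$ has bought you nothing: you are back to the original statement. Your final paragraph then promises to deduce the shift equality from the arithmetic identity~\eqref{E:odometer} by ``propagating label coincidences'' and ``processing in turn through Lemma~\ref{L:odometer}'', but no actual argument is given; in particular you never exhibit, for a general $r$, which paths you feed into Lemma~\ref{L:odometer} and why the outputs force $x(p,p+r)=x(q,q+r)$.

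The paper's proof is a one-liner precisely because it outsources this step: Corollary~\ref{C:odometer} produces a bijection $\gamma:\Lambda^p\to\Lambda^q$ with the swap property $\mu\nu=\gamma(\mu)\gamma^{-1}(\nu)$, and \cite[Theorem~7.1]{DY09} states (for single-vertex $k$-graphs) that the existence of such a bijection is \emph{equivalent} to $p-q\in\Per_\Lambda$. That equivalence is exactly the missing link in your argument. If you want a self-contained proof you must reprove that implication; the cleanest route is not via infinite paths but by showing inductively that the swap property forces $\mu\,\alpha=\gamma(\mu)\,\alpha$ for all $\alpha\in\Lambda^{Nq}$ (or dually), which is how \cite{DY09} proceeds. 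As written, your proposal stops at the point where the real content begins.
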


\begin{proof}
This follows from Corollary \ref{C:odometer} and \cite[Theorem 7.1]{DY09}.
\end{proof}

\end{document}